\documentclass[12pt]{amsart}

\usepackage{amsmath,amssymb,amsfonts,graphics,amsrefs}
\usepackage{fullpage}
\usepackage{hyperref}
\hypersetup{
    colorlinks=true,
    linkcolor=blue,
    filecolor=magenta,
    urlcolor=cyan,
}

\newtheorem{thm}{Theorem}[section]
\newtheorem{cor}[thm]{Corollary}

\newtheorem{prop}[thm]{Proposition}

\theoremstyle{definition}
\newtheorem{defn}[thm]{Definition}
\theoremstyle{remark}
\newtheorem{rem}[thm]{Remark}
\numberwithin{equation}{section}

\newcommand{\bb}[1]{\mathbb{#1}}
\newcommand{\cl}[1]{\mathcal{#1}}

\begin{document}

\title[Amenable traces and joint numerical range]{Amenable traces and the joint numerical radius}

\author[Vern I. Paulsen]{Vern I. Paulsen}
\address{Institute for Quantum Computing and Department of Pure Mathematics,
University of Waterloo, Waterloo, ON, Canada N2L 3G1}
\email{vpaulsen@uwaterloo.ca}

\author[Mizanur Rahaman]{Mizanur Rahaman}
\address{Wallenberg Centre for Quantum Technology, Chalmers University of Technology;
Department of Mathematical Sciences, Chalmers University of Technology and University of
Gothenburg}
\email{mizanurr@chalmers.se}

\author[Ebrahim Samei]{Ebrahim Samei}
\address{Department of Mathematics and Statistics, University of Saskatchewan, Saskatoon, SK, S7N 5E6, Canada}
\email{ebrahim.samei@usask.ca}

\begin{abstract}
We provide necessary and sufficient characterizations of the existence of an amenable trace on a C$^*$-algebra in terms of the joint free numerical radius of tuples of unitaries, isometries, and partial isometries in the algebra. We apply these results to obtain new obstructions to various lifting properties.
\end{abstract}

\maketitle


\section{Introduction and Preliminaries}

The main goal of this paper is to give a number of characterizations of when a unital C*-algebra enjoys an amenable trace in terms of the {\it joint free numerical radius} which was introduced in \cite{FKP}.

\begin{defn}\label{D:free joint numerical radius}
Let $\cl A\subseteq B(H)$ be a unital C$^*$-algebras, and let $a_1,\dots,a_n\in \cl A$. The {\it free joint numerical radius} $w_{cb}(a_1,\dots,a_n)$ (also referred as the cb-numerical radius) is defined by
\begin{align*}
 w_{cb}(a_1,\dots,a_n)=\sup \{w(U_1\otimes a_1+\dots+U_n\otimes a_n) \},
\end{align*}
where the supremum is taken over every $k$, every choice of $n$ contractions
$U_1,\dots , U_n \in M_k$. 
\end{defn}

By considering compressions, it is not hard to see that the quantity remains unchanged if we allow $U_1,\ldots, U_n \in B(K)$ for an arbitrary Hilbert space and the tensor product is the spatial tensor or if we restrict to $n$-tuples of unitaries instead of contractions.

This quantity was introduced in \cite{FKP} where it was used to give an extension of a classical result of Ando's that characterized operators $T$ with $w(T) \le 1/2$ and then used to give a new characterization of C*-algebras with the weak expectation property(WEP). Those ideas were further extended in \cite{FKPT1, FKPT2} to give some new tensor product and order theoretic characterizations of WEP. Most recently the joint free numerical radius was used in \cite{DPR} to give new proofs of some exactness and lifting properties for group C*-algebras. We further extend some of those ideas in section~4.

Among the results that we prove is that a unital C*-algebra has an amenable trace if and only if for any generating $n$-tuple $a_1,...,a_n$ in the algebra, consisting of isometries or co-isometries, one has $w_{cb}(a_1,\ldots,a_n) =n$. We also give necessary and sufficient conditions for a C*-algebra generated by partial isometries to have an amenable trace.

Amenable traces are important in C$^*$-algebra theory because they capture finite-dimensional approximation of tracial states and link C$^*$-algebraic structure with von Neumann algebraic approximation phenomena such as $\mathcal{R}^\omega$-embeddability (\cite{Brown}). They are also relevant to quantum information theory through the study of quantum correlations, Tsirelson-type problems, and nonlocal games (\cite{Junge-et-al}), themes that played a decisive role in the negative resolution of the Connes Embedding Problem via the breakthrough result $\mathrm{MIP}^*=\mathrm{RE}$ (\cite{MIP}).

We begin by recalling some key facts about amenable traces.

 
\begin{defn}
    Let $\cl A\subseteq B(H)$ be a C$^*$-algebra. A tracial state $\tau$ on $\cl A$ is called \textit{amenable} provided that there is a state $\rho$ on $B(H)$ such that $\rho|_{\cl A}=\tau$ and $\rho(aT)=\rho(Ta)$ for all $T\in B(H)$ and $a\in \cl A$.
\end{defn}

The state $\rho$ is often referred to as an \textit{$\cl A$-hypertrace}, or more simply as a hypertrace, when the algebra $\cl A$ is understood. An application of Arveson's Extension Theorem shows that amenability of $\tau$ is independent of the faithful representation of $\cl A$. There are several other characterizations of amenable traces coming from the work of Connes and Kirchberg. Brown's monograph \cite{Brown} is an excellent source for this material.  We recall a few of the main results from there. 

Let $\cl R$ denote the hyperfinite $\mathrm{II}_1$ factor, if $\omega$ is a free ultrafilter over the positive integers, then $\cl R^{\omega}$ is the corresponding tracial ultrapower.  Combining \cite[Theorem~3.1.6]{Brown} and \cite[Theorem~3.1.7]{Brown}, we have the following characterizations of amenable traces.

\begin{thm}\label{T:Charcrization amenable trace-Kirchberg}
    Let $\cl A$ be a separable C$^*$-algebra, and let $\tau$ be a tracial state on $\cl A$. Then the following statements are equivalent:
    \begin{enumerate}
        \item $\tau$ is amenable;
        \item There is a $*$-homomorphism $\pi: \cl A\rightarrow \cl R^{\omega}$ with a completely positive, contractive lift $\cl A\rightarrow \ell^{\infty}(\cl R)$ such that $tr_{\cl R^{\omega}}\circ\pi=\tau$, where $tr_{\cl R^{\omega}}$ is the canonical tracial state on $\cl R^{\omega}$;
        \item there is a sequence of completely positive, contractive maps $\phi_k: \cl A\rightarrow M_{d_k}$ such that
        \[\|\phi_k(ab)-\phi_k(a)\phi_k(b)\|_{2}\to 0, \ \text{and} \ tr_{d_k}(\phi_k(a))\to \tau(a),\]
        for all $a,b\in \cl A$.
    \item The linear functional $\phi:\cl A\otimes \cl A^{op}\rightarrow \mathbb{C}$ defined by $\phi(a\otimes b^{op})=\tau(ab)$ extends to a bounded linear functional on $\cl A \otimes_{min} \cl A^{op}$. 
    \item The linear functional $\phi$ extends to a state on $\cl A \otimes_{min} \cl A^{op}$.
    \end{enumerate}
\end{thm}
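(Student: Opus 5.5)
The plan is to prove the cycle $(1)\Rightarrow(5)\Rightarrow(4)\Rightarrow(1)$ and, separately, $(1)\Leftrightarrow(3)\Leftrightarrow(2)$. Throughout we fix a faithful representation $\cl A\subseteq B(H)$. By Arveson's extension theorem, amenability of $\tau$ does not depend on this choice.

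\emph{$(1)\Rightarrow(5)$.} Let $\rho$ be a hypertrace extending $\tau$. The right-multiplication representation of $\cl A^{op}$ is realized on the conjugate Hilbert space $\overline H$ by $b^{op}\mapsto \overline{b^*}$. We use the identification $B(H)\supseteq \cl A$ together with the Haagerup--Connes type pairing: the map $a\otimes b^{op}\mapsto \rho(ab)$ is positive on the algebraic tensor product $\cl A\odot\cl A^{op}$ because of the hypertrace property. Concretely, take the GNS triple $(\pi_\rho,\xi_\rho)$ of $B(H)$. The hypertrace condition makes $\xi_\rho$ tracial for $\pi_\rho(\cl A)$, so right multiplication $J\pi_\rho(b)^*J$ commutes with $\pi_\rho(B(H))\supseteq\pi_\rho(\cl A)$. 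This yields a representation of $B(H)\otimes_{max}\cl A^{op}$, hence of $\cl A\otimes_{max}\cl A^{op}$.

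The real point is continuity for the min norm. Here I would use the standard Kirchberg trick: a $\cl A^{op}$-representation commuting with all of $\pi_\rho(B(H))$ gives a state on $B(H)\otimes_{max}\cl A^{op}$. Since $B(H)$ is injective and $\cl A\subseteq B(H)$, we have $B(H)\otimes_{max}\cl A^{op}\supseteq$ ... This does not directly help. Instead, I would use the fact that $\rho$ can be approximated, via Connes' argument (Powers--Størmer inequality plus a convexity/Day-type argument), by normal states $\mathrm{Tr}(h_k\,\cdot)$ with $\|h_k a - a h_k\|_1\to 0$. The functionals $a\otimes b^{op}\mapsto \mathrm{Tr}(h_k^{1/2} a h_k^{1/2} b)$ are then vector states on $B(H)\otimes_{min}\overline{B(H)}$, evaluated at the vector $h_k^{1/2}\in S_2(H)\cong H\otimes\overline H$. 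These are min-continuous states, and their weak$^*$ limit is $\phi$. This gives (5), and $(5)\Rightarrow(4)$ is trivial.

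\emph{$(4)\Rightarrow(1)$.} A bounded functional on $\cl A\otimes_{min}\cl A^{op}$ extends by Hahn--Banach (and norm-preserving properties) to $B(H)\otimes_{min}\cl A^{op}$. Using (4) bounded implies (5), via positivity on elements $\sum a_i\otimes a_i^{*op}$ and a polar decomposition of the functional, we obtain a state $\psi$ on $B(H)\otimes_{min}\cl A^{op}$. Set $\rho(T)=\psi(T\otimes 1)$. Unitaries $u\in\cl A$ give $\phi(u\otimes \bar u^{op})=\tau(uu^*)=1$, so $u\otimes u^{*op}$ lies in the multiplicative domain of $\psi$. Then
\[
\rho(uTu^*)=\psi((u\otimes u^{*op})(T\otimes1)(u^*\otimes u^{op}))=\rho(T),
\]
which is the hypertrace condition.

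\emph{$(1)\Leftrightarrow(3)$.} This is Connes' approximation argument. From the $h_k$ above, which are almost commuting in $\|\cdot\|_1$, we take spectral projections (Connes' ``$\|\cdot\|_2$ trick'' via Powers--Størmer) to get finite-rank projections $P_k$ that almost commute with $\cl A$ in normalized $2$-norm. The compressions $\phi_k(a)=P_kaP_k$ are then ucp and asymptotically multiplicative in $\|\cdot\|_2$, with normalized traces converging to $\tau$. Conversely, given the $\phi_k$, the states $tr_{d_k}\circ\phi_k$ extend by Arveson to ucp maps $B(H)\to M_{d_k}$, and any weak$^*$ cluster point of $tr_{d_k}\circ\tilde\phi_k$ is a hypertrace; the key check is the multiplicative-domain/Cauchy--Schwarz estimate $|tr(\tilde\phi(aT)-\phi(a)\tilde\phi(T))|\le \|\phi(a^*a)-\phi(a)^*\phi(a)\|_1^{1/2}\|T\|$.

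\emph{$(3)\Leftrightarrow(2)$.} Embed $M_{d_k}\subseteq\cl R$ unitally and trace-preservingly. Then $(\phi_k)$ defines a cpc map into $\ell^\infty(\cl R)$ whose composition with the quotient to $\cl R^\omega$ is a $*$-homomorphism, by asymptotic multiplicativity in $\|\cdot\|_2$, with the right trace. Conversely, lift components and use that $\cl R$ is an increasing union of matrix algebras, with trace-preserving conditional expectations onto $M_{2^m}$, and separability to diagonalize.

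The main obstacle is the Powers--Størmer/Day-convexity step that turns a singular hypertrace into approximately invariant normal states and finite-rank projections.
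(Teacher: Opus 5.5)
The paper gives no proof of this theorem; it quotes it from Brown's memoir (Theorems 3.1.6 and 3.1.7). Your outline follows the standard argument behind those results: Day's convexity trick, Powers--St{\o}rmer, Connes' spectral-projection trick, the multiplicative-domain argument, and ultrapower bookkeeping. Your $(1)\Rightarrow(5)$ is sound once you drop the abandoned GNS paragraph, which only yields max-continuity. Your $(3)\Rightarrow(1)$ and $(2)\Leftrightarrow(3)$ are sound, and so is the multiplicative-domain step in $(4)\Rightarrow(1)$.

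\textbf{The genuine gap: the trace condition in $(1)\Rightarrow(3)$.} Write $e_t=\chi_{[t,\infty)}(h)$ and let $\|\cdot\|_2$ be the unnormalized Hilbert--Schmidt norm. Connes' inequality together with Powers--St{\o}rmer gives
\[
\int_0^\infty\|[e_t,u]\|_2^2\,dt\le 2\|uhu^*-h\|_1^{1/2},\qquad \int_0^\infty\mathrm{Tr}(e_t)\,dt=\mathrm{Tr}(h)=1 .
\]
This lets you pick one $t$ at which the commutators are small relative to $\mathrm{Tr}(e_t)$. But nothing controls $\mathrm{Tr}(e_ta)/\mathrm{Tr}(e_t)$ at that $t$: only the average $\int_0^\infty\mathrm{Tr}(e_ta)\,dt=\mathrm{Tr}(ha)$ is close to $\tau(a)$.

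This really fails. Take the diagonal algebra $\cl A=D_2\subseteq M_2=B(\mathbb{C}^2)$, $h=\mathrm{diag}(3/5,2/5)$ and $\tau=\mathrm{Tr}(h\,\cdot\,)$. This $\tau$ is amenable, since $h$ commutes with $\cl A$. The spectral projections of $h$ are $I_2$ and $E_{11}$, with normalized traces $(a_1+a_2)/2$ and $a_1$. More strongly, every rank-one $P$ satisfies $\|[P,E_{11}]\|_2^2=2\,\mathrm{Tr}(PE_{11})\bigl(1-\mathrm{Tr}(PE_{11})\bigr)$. So no sequence of asymptotically multiplicative compressions $a\mapsto P_kaP_k$ on this $H$ has normalized traces tending to $\tau$.

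\textbf{The missing idea: amplify and average over $t$ instead of selecting one $t$.} Perturb $h$ in trace norm to a finite-rank operator of trace $1$ with eigenvalues in $\frac{1}{N}\mathbb{Z}$. Write $h=\frac{1}{N}\sum_{s=1}^M E_s$ with $E_s=\chi_{[s/N,\infty)}(h)$. Define the ucp map $\phi(a)=\bigoplus_{s=1}^M E_saE_s$; this is the compression of $a\otimes 1$ to $P=\sum_s E_s\otimes e_{ss}$ on $H\otimes\mathbb{C}^M$. Then:
\begin{enumerate}
\item $\mathrm{Tr}(P)=N$ and $\mathrm{tr}_N(\phi(a))=\mathrm{Tr}(ha)$ exactly;
\item $\mathrm{Tr}(P)^{-1}\|[P,u\otimes 1]\|_2^2=\frac{1}{N}\sum_s\|[E_s,u]\|_2^2=\int_0^\infty\|[e_t,u]\|_2^2\,dt$, which is small by the display above;
\item $PabP-PaPbP=Pa(1-P)bP$ and $Pa(1-P)=[P,a](1-P)$, so item 2 gives the asymptotic multiplicativity required in (3).
\end{enumerate}

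\textbf{A smaller point: your justification of $(4)\Rightarrow(5)$ is not correct.} Elements $\sum_i a_i\otimes(a_i^*)^{op}$ need not be positive. For $a=\mathrm{diag}(1,-1)$, the element $a\otimes(a^*)^{op}$ acts on the Hilbert--Schmidt space by $T\mapsto aTa$, which has eigenvalue $-1$. No polar decomposition is needed either. What you need is traciality: for $x=\sum_i a_i\otimes b_i^{op}$ in the algebraic tensor product,
\[
\phi(x^*x)=\sum_{i,j}\tau(a_i^*a_jb_jb_i^*)=\tau(y^*y)\ge 0,\qquad y=\sum_j a_jb_j .
\]
Such $x^*x$ are dense in the positive cone of $\cl A\otimes_{\min}\cl A^{op}$. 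Hence the bounded extension is positive, with norm $\phi(1\otimes1)=1$. Only after that should you extend it, as a state, to $B(H)\otimes_{\min}\cl A^{op}$.

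Finally, the statement allows nonunital $\cl A$, whereas your $(4)\Rightarrow(1)$ uses unitaries of $\cl A$. You should pass to the unitization; Brown works with unital algebras.
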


We end this subsection highlighting our main result of this article that relates the joint free numerical radius and existence of amenable trace on a C$^*$-algebra.

\begin{thm}$($Theorem \ref{T:amne trace vs FJNR-all equivalent conditions}$)$ 
Let $\cl A$ be a unital C*-algebra. Then the following are equivalent:
\begin{enumerate}
\item $\cl A$ has an amenable trace,
\item for every $n$-tuple of unitaries $(u_1,\ldots,u_n)$ in $\cl A$ and for every $n \in \bb N$ we have that $w_{cb}(u_1,\ldots,u_n) =n$,
\item for every $n$-tuple $(x_1,\ldots,x_n)$ in $\cl A$ such that each $x_i$ is either an isometry or coisometry, we have that $w_{cb}(x_1,\ldots,x_n) =n$,
\item for every $n$-tuple of unitaries $(u_1,\ldots,u_n)$ in $\cl A$ and for every $n \in \bb N$ we have that
$w(\sum_{i=1}^n u_i \otimes (u_i^*)^{op}) =n$ as an element of $\cl A \otimes_{min} \cl A^{op}$,
\item for every $n$-tuple $(x_1,\ldots,x_n)$ in $\cl A$ such that each $x_i$ is either an isometry or coisometry, we have that
$w\left(\sum_{i=1}^n x_i \otimes (x_i^*)^{op}\right)=n$ as an element of $\cl A \otimes_{min} \cl A^{op}$.
\end{enumerate}
Moreover, if $\cl A$ is generated by $(x_1,\ldots,x_n)$ where each $x_i$ is either an isometry or coisometry, then $\cl A$ has an amenable trace if and only if $w_{cb}(x_1,\ldots,x_n) =n$.
\end{thm}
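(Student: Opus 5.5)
I would organize the proof around condition (5) of Theorem~\ref{T:Charcrization amenable trace-Kirchberg}: $\cl A$ has an amenable trace exactly when $\phi(a\otimes b^{op})=\tau(ab)$ extends to a state on $\cl A\otimes_{min}\cl A^{op}$. The first step is a reduction linking $w_{cb}$ to the minimal tensor product. For $a_i\in\cl A$ I claim
\[
w_{cb}(a_1,\dots,a_n)=\sup\Big\{w\Big(\sum_i b_i\otimes a_i\Big)\Big\},
\]
where the supremum runs over contractions $b_i$ in any C$^*$-algebra, computed in the min tensor product. Since the min norm and the numerical range are spatial, we may take $b_i\in B(K)$, and compressions reduce this to matrices, as remarked after Definition~\ref{D:free joint numerical radius}. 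In particular, if $x_i^*$ is a contraction in $\cl A^{op}$, then $w(\sum x_i\otimes (x_i^*)^{op})\le w_{cb}(x_1,\dots,x_n)\le n$. So (4)$\Rightarrow$(2) and (5)$\Rightarrow$(3) are immediate, and (3)$\Rightarrow$(2), (5)$\Rightarrow$(4) are trivial.

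For (1)$\Rightarrow$(5), let $\tau$ be amenable and let $\tilde\phi$ be the state extending $\phi$ on $\cl A\otimes_{min}\cl A^{op}$. Then
\[
\tilde\phi\Big(\sum x_i\otimes (x_i^*)^{op}\Big)=\sum_i\tau(x_ix_i^*).
\]
For an isometry, traciality gives $\tau(x_ix_i^*)=\tau(x_i^*x_i)=1$, and for a coisometry $x_ix_i^*=1$ directly. Hence the numerical radius is at least $n$, so it equals $n$. This handles the forward direction for all conditions at once.

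The main step is (2)$\Rightarrow$(1). I would argue by contradiction through a Hahn--Banach/convexity argument on the state space of $\cl A\otimes_{min}\cl A^{op}$, or, alternatively, build a hypertrace directly. Fix a faithful representation $\cl A\subseteq B(H)$. Given (2), for unitaries $u_1,\dots,u_n$ choose contractions $U_i\in M_k$ and a unit vector $\xi$ with
\[
\Big|\Big\langle \sum U_i\otimes u_i\,\xi,\xi\Big\rangle\Big|\ge n-\varepsilon.
\]
Since each term has modulus at most $1$, a rotation makes every term close to $1$ in real part. Equality-case estimates $\|(U_i\otimes u_i)\xi-\xi\|^2\le 2\varepsilon$ then show that the vector state $\omega_\xi$ on $M_k\otimes B(H)$ nearly commutes with each $1\otimes u_i$; more precisely, $\|(U_i\otimes u_i)\xi-\xi\|$ and $\|(U_i^*\otimes u_i^*)\xi-\xi\|$ are both small. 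Using the $U_i$ to ``cancel'' the $u_i$, I would define $\rho_\xi(T)=\langle (1\otimes T)\xi,\xi\rangle$ on $B(H)$ and check that
\[
|\rho_\xi(u_iT)-\rho_\xi(Tu_i)|\lesssim\sqrt{\varepsilon}\,\|T\|.
\]
This works because $(1\otimes u_i)\xi\approx (U_i^*\otimes 1)\xi$, and $U_i^*\otimes 1$ commutes with $1\otimes T$. Some care is needed since $U_i$ is only a contraction, but near-equality forces it to act almost unitarily on the relevant vectors. Taking a weak$^*$ limit point over the net indexed by (finite sets of unitaries, $\varepsilon$) produces a state $\rho$ on $B(H)$ with $\rho(uT)=\rho(Tu)$ for every unitary $u\in\cl A$. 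Because unitaries span $\cl A$, $\rho$ is a hypertrace and $\rho|_{\cl A}$ is an amenable trace. I expect this near-commutation estimate to be the main obstacle.

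For the ``moreover'' part, with generators $x_i$ that are isometries or coisometries, the same construction yields $\rho$ with $\rho(x_iT)\approx\rho(Tx_i)$ and $\rho(x_i^*T)\approx\rho(Tx_i^*)$. The centralizer $\{a:\rho(aT)=\rho(Ta)\ \forall T\}$ is a norm-closed $*$-subalgebra, since $\rho(abT)=\rho(bTa)=\rho(Tab)$, so it contains $C^*(x_1,\dots,x_n)=\cl A$. The forward direction of the ``moreover'' part follows from (1)$\Rightarrow$(3). The same centralizer argument, applied to arbitrary finite tuples, also gives (3)$\Rightarrow$(1) directly.
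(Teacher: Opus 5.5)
Your proposal is correct. For (2)$\Rightarrow$(1) and the ``moreover'' clause it follows the paper's argument. Your near-commutation estimate is the proof of Proposition~\ref{P:amenable trace gen by unitaries}, and your single net indexed by (finite set of unitaries, $\varepsilon$) merges that proposition with the paper's cluster-point argument over the subalgebras $\cl A_F$. The contraction issue you flag is harmless, and you can also avoid it by taking the $U_i$ unitary, as remarked after Definition~\ref{D:free joint numerical radius}.

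The genuine difference is in (1)$\Rightarrow$(3). The paper proves it through Theorem~\ref{T:amen trace imply max FJNR}. If $w_{cb}(x_1,\dots,x_n)=w<n$, Proposition~\ref{wcb-prop} yields a UCP map $\cl U_n\to\cl A$ with $E_{1,2,i}\mapsto x_i/2w$, which is extended to $\oplus_{i=1}^n M_2$. Evaluating a hypertrace on the resulting positive Choi blocks then gives $1=\sum_i\psi(P_i+Q_i)\ge n/w>1$.

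You instead get every forward implication from two ingredients:
\begin{itemize}
\item the single computation $\phi(\sum_i x_i\otimes(x_i^*)^{op})=n$ for the Kirchberg state;
\item the elementary bound $w(\sum_i x_i\otimes(x_i^*)^{op})\le w_{cb}(x_1,\dots,x_n)$, which is the left half of Proposition~\ref{P:FJNR vs tensor with op}.
\end{itemize}
This gives the chain (1)$\Rightarrow$(5)$\Rightarrow$(3). The paper already uses Kirchberg's theorem for (1)$\Rightarrow$(5), so your chain costs nothing extra. It also shows that Theorem~\ref{T:amen trace imply max FJNR} is not actually needed for this theorem.

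What the paper's route buys is a proof of (1)$\Rightarrow$(3) from the bare hypertrace definition, without the $\min$-continuity of $a\otimes b^{op}\mapsto\tau(ab)$. It also isolates the $\cl U_n$/Choi-matrix mechanism that Section~4 reuses to obstruct the LLP for maps into $\cl B/J$.
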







\subsection{Operator systems and LLP}\label{S:Operator systems and LLP}
We now recall a few facts about operator systems and present a somewhat simpler proof of the key properties of the joint free numerical radius.
An operator system $\cl S$ is said to have the {\it lifting property(LP)} if whenever $\phi: \cl S \to \cl A/J$ is a unital completely positive map(UCP), there exists a {\it UCP lift}, i.e., a UCP map $\psi: \cl S \to \cl A$ such that $\pi \circ \psi = \phi$ where $\pi: \cl A \to \cl A/J$ is the quotient $*$-homomorphism.  An operator system is said to have the {\it local lifting property(LLP)} if whenever $\phi: \cl S \to \cl A/J$ is UCP and $\cl T \subseteq \cl S$ is a finite dimensional operator subsystem, then the restriction of $\phi$ to $\cl T$ has a UCP lift.


Given an operator system $\cl S$ the dual space $\cl S^d$ of bounded linear functionals is a $*$-vector space with $f^*(x):= \overline{f(x^*)}$ and a natural family of positive cones in $M_n(\cl S^d)$, by defining $(f_{i,j}) \in M_n(\cl S^d)^+$ if and only if the map $\phi: \cl S \to M_n$ defined by
\[
 \phi(x) = (f_{i,j}(x)) \in M_n,
\]
is completely positive. When $\cl S$ is finite-dimensional, Choi-Effros \cite[Corollary~4.5]{ChoiEffros} proved that there always exists a {\it strictly positive} linear functional, i.e., a functional $f: \cl S \to \bb C$ such that 
\[
 p \in \cl S^+, \,\, p \ne 0 \implies f(p) >0,
\]
and that any such functional satisfies the axioms to be an Archimedean order unit for $\cl S^d$, so that $(\cl S^d, f)$ satisfies the axioms to be an abstract operator system. Thus, the duals of finite-dimensional operator systems are again operator systems, albeit non-canonically, since it depends on the choice of an order unit.


Let
$\bb F_n$ the free group on $n$ generators, let $C^*(\bb F_n)$ denote its full group C*-algebra and we let $u_1,\dots,u_n \in C^*(\bb F_n)$ denote the images of the canonical generators of $\bb F_n$. Then we let set
\[\cl S_n = span \{ 1, u_1,\ldots,u_n, u_1^*,\ldots,u_n^* \} \subseteq C^*(\bb F_n).\]
We let $\delta_0, \delta_1,..., \delta_n, \delta_1^*, \delta_n^* \in \cl S_n^d$ denote the corresponding dual functionals.  Note that this notation is consistent, since $\delta_i^* = (\delta_i)^*$. Moreover this functionals are linear independent and span $\cl S_n^d$. 

We let $\cl U_n \subseteq M_2 \oplus \dots M_2$($n$ copies) be the operator system spanned by the identity $I = I_2 \oplus \cdots \oplus I_2$ and the matrix units $E_{1,2,j}, E_{2,1,j}$ where the $j$ indicates that they are in the $j$-copy of $M_2$, then it is proven in \cite{FP} that $\cl U_n$ is completely order isomorphic to $\cl S_n^d$. We present a simplified proof of this fact below, along with a result from \cite[Lemma 2.5]{DPR} that avoids the use of use operator system quotients and also indicates the role of the joint free numerical radius.

It is easily checked that given an operator on a Hilbert space,  
\[I + e^{i \theta} T + e^{-i \theta}T^* \ge 0, \forall \theta \iff w(T) \le 1/2.\]

\begin{prop}\label{wcb-prop} Let $A_1,...,A_n \in M_p$. Then the following are equivalent:
\begin{enumerate}
\item $w_{cb}(A_1,...,A_n) \le 1/2$,
\item $P:=I_p \otimes 1 + \sum_{i=1}^n A_i \otimes u_i + \sum_{i=1}^n A_i^* \otimes u_i^* \in M_p(C^*(\bb F_n))^+$,
\item the map $\Phi:\cl S_n^d \to M_p$ sending $\delta_0 \to I_p, \delta_i \to A_i, \delta_i^* \to A_i^*$ is CP,
\item the map $\Psi: \cl U_n \to M_p$ sending $I \to I_p, E_{1,2,i} \to A_i, E_{2,1,i} \to A_i^*$ is UCP.
\end{enumerate}
Consequently, the map $\Gamma: \cl S_n^d \to \cl U_n$ given by $\delta_0 \to I, \delta_i \to E_{1,2,i}, \delta_i^* \to E_{2,1,i}$ is a complete order isomorphism and with respect to this isomorphism, we have that $\delta_0$ is the order unit for $\cl S_n^d$.
\end{prop}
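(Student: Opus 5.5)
I will prove the cycle (1)$\Leftrightarrow$(2), (2)$\Leftrightarrow$(3), and (3)$\Leftrightarrow$(4), and then deduce the statement about $\Gamma$.

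\textbf{(1)$\Leftrightarrow$(2).} Apply the displayed equivalence ($I+e^{i\theta}T+e^{-i\theta}T^*\ge 0$ for all $\theta$ iff $w(T)\le 1/2$) to $T=\sum_i V_i\otimes A_i$, with unitaries $V_i\in M_k$. Absorbing $e^{i\theta}$ into the $V_i$, condition (1) says exactly that
\[
I\otimes I_p+\sum_i V_i\otimes A_i+\sum_i V_i^*\otimes A_i^*\ge 0
\]
for every $k$ and every choice of unitaries $V_1,\dots,V_n\in M_k$. Each such tuple defines a finite-dimensional unitary representation $\pi$ of $\bb F_n$. Since $C^*(\bb F_n)$ is residually finite-dimensional (Choi), these representations separate $M_p(C^*(\bb F_n))$ and detect positivity. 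Hence (1) is equivalent to $(\mathrm{id}\otimes\pi)(P)\ge0$ for all such $\pi$, which is (2). I use the unitary form of $w_{cb}$ noted after Definition~\ref{D:free joint numerical radius}, and flip the tensor order, which does not change the numerical radius.

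\textbf{(2)$\Leftrightarrow$(3).} This is the standard duality between positive elements of $M_p(\cl S)$ and CP maps $\cl S^d\to M_p$ for finite-dimensional $\cl S$. The map $\Phi$ corresponds to the tensor $P\in M_p(\cl S_n)=M_p\otimes\cl S_n$. Its complete positivity means that for every $(f_{ij})\in M_m(\cl S_n^d)^+$, i.e.\ every CP map $\phi:\cl S_n\to M_m$, the matrix $(\Phi(f_{ij}))$ is positive; this matrix is, up to a canonical shuffle, $(\mathrm{id}_p\otimes\phi)(P)$.

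If $P\ge0$ in $M_p(C^*(\bb F_n))$, then $P\ge0$ in $M_p(\cl S_n)$ because $\cl S_n$ carries the inherited order. Since every CP $\phi$ maps positives to positives, $\Phi$ is CP. Conversely, apply $\Phi$ to suitable positive elements of $M_p(\cl S_n^d)$: take $\phi$ ranging over all UCP maps $\cl S_n\to M_m$, which by Arveson extend to $C^*(\bb F_n)$. Positivity of $(\mathrm{id}\otimes\phi)(P)$ for all such $\phi$ then yields $P\in M_p(\cl S_n)^+$. The main care here is bookkeeping of the identification $M_m(\cl S^d)\cong CP(\cl S,M_m)$, which I will state explicitly.

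\textbf{(3)$\Leftrightarrow$(4).} It suffices to show that for $A_1,\dots,A_n\in M_p$, the map $\Psi$ on $\cl U_n$ is UCP iff $w_{cb}(A)\le1/2$; then (4)$\Leftrightarrow$(1), closing the cycle with (3). I plan to show the following.
\begin{itemize}
\item[(a)] $\Psi$ is UCP iff there is a UCP extension to $\bigoplus M_2$. By Arveson this is a Stinespring dilation: there exist a representation $\sigma=\bigoplus_j \sigma_j$ and an isometry $V$ with $A_j=V^*\sigma_j(E_{12})V$.
\item[(b)] The tuples arising this way are exactly those with $w_{cb}\le1/2$. The direction ``$\Rightarrow$'' holds since $w_{cb}(E_{12,1},\dots,E_{12,n})=1/2$ (computed directly: $\sum_j U_j\otimes E_{12,j}$ is a direct sum of operators $\begin{pmatrix}0&U_j\\0&0\end{pmatrix}$ of numerical radius $\le 1/2$) and $w_{cb}$ does not increase under UCP compressions. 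For the converse, I will instead show directly that $\Gamma:\cl S_n^d\to\cl U_n$ is a complete order isomorphism, as follows.
\end{itemize}

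\textbf{Plan for $\Gamma$, the main obstacle.} For matrices $(B_{ij})$ over $\cl U_n$, positivity of $I\otimes X_0+\sum_j E_{12,j}\otimes X_j+E_{21,j}\otimes X_j^*$ in $\bigoplus_j M_2(M_m)$ means $\begin{pmatrix}X_0&X_j\\X_j^*&X_0\end{pmatrix}\ge0$ for every $j$. The key step is to match this with CP-ness of the dual map, namely that $X_0\otimes1+\sum_j X_j\otimes u_j+X_j^*\otimes u_j^*\ge0$ in $C^*(\bb F_n)$. Reducing to invertible $X_0=I$, the first condition reads $\|X_j\|\le1$ for each $j$, while the second reads $w_{cb}(X)\le1/2$ by the argument in (1)$\Leftrightarrow$(2). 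These do not match on their own, so the precise statement I will aim to prove is that the matrix levels of $\cl S_n^d$ and $\cl U_n$ correspond via the \emph{dual} pairing, i.e.\ testing against each other. Concretely, I will show that positivity of $\sum_j V_j\otimes A_j$-type expressions over unitaries $V_j$ is equivalent to the existence of the dilation in (a). My first attempt at this will be a Haagerup-type factorization: $w_{cb}(A)\le1/2$ iff $A_j=\tfrac12 R^*W_jC$ with $R^*R+C^*C\le 2I$, which gives the dilation $A_j=V^*\sigma_j(E_{12})V$ directly. Once (4)$\Leftrightarrow$(1) is established, it follows that $\Psi\circ\Gamma=\Phi$ is CP iff $\Psi$ is UCP at all matrix levels. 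This yields that $\Gamma$ is a complete order isomorphism, with $\delta_0\mapsto I$ the order unit.
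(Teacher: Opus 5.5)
Your arguments for (1)$\Leftrightarrow$(2), (2)$\Leftrightarrow$(3) and (4)$\Rightarrow$(1) are sound.
\begin{itemize}
\item For (1)$\Leftrightarrow$(2), residual finite-dimensionality works. The remark after Definition~\ref{D:free joint numerical radius}, that unitaries on arbitrary Hilbert spaces may be used, makes it immediate from the universal representation.
\item For (2)$\Leftrightarrow$(3), the identity $\Phi(f)=(\mathrm{id}_p\otimes f)(P)$ does the job. This is a bit cleaner than the paper's route, since it needs no normalization of the $\delta_0$-coefficient.
\item For (4)$\Rightarrow$(1), UCP maps do not increase $w_{cb}$, and $w_{cb}(E_{1,2,1},\dots,E_{1,2,n})=1/2$.
\end{itemize}

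The gap is the remaining implication (1)$\Rightarrow$(4), equivalently that $\Gamma$ is a complete order isomorphism, which you never prove. It starts in your plan for $\Gamma$, where you compare $M_m(\cl U_n)^+$ with the wrong cone. The condition $X_0\otimes 1+\sum_j X_j\otimes u_j+X_j^*\otimes u_j^*\ge 0$ describes $M_m(\cl S_n)^+$, i.e.\ CP maps \emph{out of} $\cl S_n^d$, not positivity \emph{in} $M_m(\cl S_n^d)$. By the definition of the dual matrix order, $I_m\otimes\delta_0+\sum_j X_j\otimes\delta_j+X_j^*\otimes\delta_j^*\in M_m(\cl S_n^d)^+$ iff the unital map $\cl S_n\to M_m$, $u_j\mapsto X_j$, $u_j^*\mapsto X_j^*$, is CP. This holds iff $\|X_j\|\le 1$ for all $j$:
\begin{itemize}
\item necessity holds because UCP maps are contractive;
\item for sufficiency, take the Halmos unitary dilation $U_j$ of each $X_j$, use the universal property of $C^*(\bb F_n)$ to get a $*$-homomorphism $u_j\mapsto U_j$, and compress.
\end{itemize}
That is exactly the condition $\left(\begin{smallmatrix} I & X_j\\ X_j^* & I\end{smallmatrix}\right)\ge 0$ for all $j$, which defines the corresponding cone in $M_m(\cl U_n)$. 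So the two cones coincide under $\Gamma$; a general positive $\delta_0$-coefficient reduces to $I$ by the conjugation trick, adding $\epsilon\delta_0$, and closedness. Then (3)$\Leftrightarrow$(4) is immediate from $\Phi=\Psi\circ\Gamma$. The ``mismatch'' you observed between $\|X_j\|\le 1$ and $w_{cb}(X)\le 1/2$ comes from this mix-up, not from a real obstruction.

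Your fallback does not close the gap. As written, with a single pair $R,C$ and nothing tying the $W_j$ together, the factorization is false for $n\ge 2$. Take $p=1$ and $R=C=W_j=1$: then $A_j=\tfrac12$ and $R^*R+C^*C=2$, yet $w_{cb}(\tfrac12,\dots,\tfrac12)=n/2$. The correct version uses separate blocks: $A_j=P_j^{1/2}K_jQ_j^{1/2}$ with $\|K_j\|\le 1$, $P_j,Q_j\ge 0$ and $\sum_j(P_j+Q_j)=I$. But by Arveson extension and Choi's theorem on each $M_2$ summand, that is just a restatement of (4). So showing that $w_{cb}(A)\le 1/2$ forces such a factorization is the multivariable Ando theorem, i.e.\ the very implication at issue. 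Likewise, your final step, deducing that $\Gamma$ is a complete order isomorphism from (1)$\Leftrightarrow$(4), presupposes the missing direction.

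The paper avoids all of this with the Halmos-dilation lemma above. It identifies both normalized cones with $\{(X_j): \|X_j\|\le 1\ \forall j\}$, which gives $\Gamma$ directly. It then gets (1)$\Leftrightarrow$(3) by testing $\Phi$ on these positive elements.
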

\begin{proof} The equivalence of (1) and (2) is clear.  
For the rest of the proof we will use the fact that  for $A_0 >0$,
\[ A_0 \otimes 1 + \sum_{i=1}^n A_i \otimes u_i + \sum_{i=1}^n A_i^* \otimes u_i^* \ge 0 \iff
I_p \otimes 1 + \sum_{i=1}^n A_0^{-1/2}A_iA_0^{-1/2} \otimes u_i + \sum_{i=1}^n A_0^{-1/2}A_i^*A_0^{-1/2} \otimes u_i^* \ge 0,\] and similarly for sums of the $\delta_i$'s, to reduce checking positivity to the case where $A_0 = I$.

Next we claim that a map $\Gamma: \cl S_n \to M_d$ sending $u_i \to B_i$ is UCP if and only if $\|B_i \| \le 1, \forall i$. To see this first note that since a UCP map is contractive, we must have $\|B_i \| \le 1$. To see the converse, let $U_i = \begin{pmatrix} B_i & \sqrt{I - B_iB_i^*} \\ \sqrt{I- B_i^*B_i} & -B_i^* \end{pmatrix}$ be the Halmos unitary dilation of $B_i$. Since the map $u_i \to U_i$ extends to a *-homomorphism of $C^*(\bb F_n)$, we see that $\Gamma$ is a compression of a *-homomorphism and so UCP.

 Hence, 
\[ \Gamma := I_p \otimes \delta_0 + \sum_{i=1}^n B_i \otimes \delta_i + \sum_{i=1}^n B_i^* \otimes \delta_i^*  \in M_p(\cl S_n^d)^+ \iff \|B_i \| \le 1, \forall i.\] 
Thus,  $\Phi: \cl S_n^d \to M_p$ is CP, if and only if, 
\[ id_d \otimes \Phi(\Gamma) = I_d \otimes I_p + \sum_{i=1}^n B_i \otimes A_i + \sum_{i=1}^n B_i^* \otimes A_i^* \ge 0, \forall \Gamma \iff w_{cb}(A_1,...,A_n) \le 1/2,\]
and we have the equivalence of (1) and (3).

Finally, since $\begin{pmatrix} I & B \\ B^* & I \end{pmatrix} \ge 0 \iff \|B\| \le 1$ we have that
\[ I _d\otimes I + \sum_{i=1}^n B_i \otimes E_{1,2,i} + \sum_{i=1}^n B_i^* \otimes E_{2,1,i} \in M_d(\cl U_n)^+ \iff \|B_i \| \le 1,\]
which shows the complete order isomorphism between $\cl U_n$ and $\cl S_n^d$, from which the equivalence of (3) and (4) follows.
\end{proof}


\section{C*-algebras generated by (co)isometries}

In this section we derive necessary and sufficient conditions for C*-algebras generated by unitaries, isometries and co-isometries to have an amenable trace in terms of the joint numerical radius of the generating tuple. First, we will show that obtaining the maximal value for the joint numerical radius of a tuple of contractions implies that the generating C*-algebra must have an amenable trace. 

\begin{prop}\label{P:amenable trace gen by unitaries} 
Let $a_1,\ldots,a_n$ be contractions on a Hilbert space $H$, and let $\cl A$ be the C*-algebra that they generate.
If $w_{cb}(a_1,\ldots,a_n) = n$, then $\cl A$ has an amenable trace.
\end{prop}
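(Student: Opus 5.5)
The plan is to take the supremum in the definition of $w_{cb}$ over unitary tuples, which is allowed by the remark after Definition~\ref{D:free joint numerical radius}. From this I will extract a sequence of approximately maximizing configurations and then build a hypertrace as a weak$^*$ limit point of suitable vector states.

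Since each $\|a_i\|\le 1$ and $\|U_i\|\le 1$, we have $w\big(\sum_i U_i\otimes a_i\big)\le n$ always. So $w_{cb}=n$ gives, for each $m$, unitaries $U^{(m)}_1,\dots,U^{(m)}_n\in M_{k_m}$ and a unit vector $\xi_m\in \bb C^{k_m}\otimes H$ with
\[ \Big|\sum_{i=1}^n \langle (U^{(m)}_i\otimes a_i)\xi_m,\xi_m\rangle\Big|\to n. \]
Rotating $\xi_m$ by a phase (or multiplying all $U_i$ by a common scalar), we may assume the sum is real and tends to $n$. Each term has modulus at most $1$, so each term tends to $1$. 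The equality case of Cauchy--Schwarz, $\|(U_i\otimes a_i)\xi_m-\xi_m\|^2\le 2-2\,\mathrm{Re}\langle (U_i\otimes a_i)\xi_m,\xi_m\rangle\to 0$, then gives
\[ \|(U^{(m)}_i\otimes a_i)\xi_m-\xi_m\|\to 0 .\]
Because $U_i$ is unitary and $a_i$ a contraction, the same holds for $(U_i^{(m)*}\otimes a_i^*)\xi_m$, using $\|x^*\xi-\xi\|^2\le 2-2\mathrm{Re}\langle x\xi,\xi\rangle$ for contractions $x$. Consequently $\|(1\otimes a_i)\xi_m-(U_i^{(m)*}\otimes 1)\xi_m\|\to0$, and similarly for $a_i^*$ with $U_i^{(m)}$.

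Now identify $\xi_m\in \bb C^{k_m}\otimes H$ with a Hilbert--Schmidt operator $X_m: \overline{\bb C^{k_m}}\to H$ (or $H\to$ ...), of Hilbert--Schmidt norm $1$. Under this identification, $(V\otimes T)\xi$ corresponds to $T X V^{t}$. The relations above say that $a_iX_m - X_m W_i^{(m)}\to 0$ in Hilbert--Schmidt norm, where $W_i^{(m)}$ is a unitary (a transpose of $U_i^{(m)*}$), and likewise for $a_i^*$ with $W_i^{(m)*}$. Using $\|X_m^*\|_2=\|X_m\|_2$, this yields $\|a_iX_mX_m^*-X_mX_m^*a_i\|_1\to 0$ for $a_i$ and $a_i^*$. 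Indeed,
\[ a_iX_mX_m^*\approx X_mW_iX_m^*, \]
and taking the adjoint of the $a_i^*$ relation gives $X_m^*a_i\approx W_iX_m^*$, so that
\[ X_mX_m^*a_i\approx X_mW_iX_m^*, \]
with errors controlled by the Cauchy--Schwarz inequality for Hilbert--Schmidt operators. Hence the density operators $D_m=X_mX_m^*$ on $H$ asymptotically commute in trace norm with every generator and every generator's adjoint. The Leibniz rule $[ab,D]=a[b,D]+[a,D]b$ extends this to all words in the generators, and a density argument extends it to all of $\cl A$.

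Finally, define the states $\rho_m(T)=\mathrm{tr}(D_mT)$ on $B(H)$ and let $\rho$ be a weak$^*$ cluster point. Then
\[ |\rho_m(aT)-\rho_m(Ta)|=|\mathrm{tr}([D_m,a]T)|\le\|[D_m,a]\|_1\|T\|\to0, \]
so $\rho(aT)=\rho(Ta)$ for all $a\in\cl A$ and $T\in B(H)$. Thus $\rho|_{\cl A}$ is a tracial state with a hypertrace, i.e.\ an amenable trace. The main obstacle is the bookkeeping in the middle step, namely getting the transposes and adjoints right so that a single unitary $W_i$ intertwines $a_i$ on both sides of $X_m$ and $X_m^*$. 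The crucial point is that unitarity of the $U_i$, not mere contractivity, allows the adjoint relation to be derived; this is why I reduce to unitaries at the start.
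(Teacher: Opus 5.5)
Your proposal is correct and is essentially the paper's proof. The paper also picks near-maximizing unitaries and a unit vector $h$, derives $(I_k\otimes a_i)h\approx (w_i^*\otimes I_H)h$ and $(I_k\otimes a_i^*)h\approx (w_i\otimes I_H)h$, and takes a weak$^*$ limit point of the states $s_h(x)=\langle (I_k\otimes x)h, h\rangle$. The density operators of these states are exactly your $D_m=X_mX_m^*$, so your trace-norm bound $\|[D_m,a_i]\|_1\to 0$ is just the dual form of the paper's estimate $|s_h(xa_i)-s_h(a_ix)|\le 2\sqrt{2\epsilon}\,\|x\|$.

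One small slip: multiplying $\xi_m$ by a phase leaves $\langle T\xi_m,\xi_m\rangle$ unchanged, so you must use your alternative of rotating all $U_i$ by a common unimodular scalar. The paper instead rotates each $w_i$ separately so that every term is nonnegative.
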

\begin{proof} Let $\epsilon >0$. We can assume that we have $n$ unitaries, $w_1,\ldots, w_n$  in a matrix algebra $M_k$ and a unit vector $h \in \bb C^k \otimes H$ such that $\langle (w_i \otimes a_i) h \vert h \rangle \ge 0$ for all $i$ and
\[n - \epsilon < \sum_{i=1}^n \langle (w_i \otimes a_i) h \vert h \rangle \leq n-1+\min\{\langle (w_i \otimes a_i) h \vert h \rangle: i=1,\ldots,n\}.\]
This implies that for $i=1,\ldots,n$,
$$1 - \epsilon < \langle (w_i \otimes a_i) h \vert h \rangle,$$ 
and so,
\[\| (w_i \otimes a_i) h -h \|^2 < 2 \epsilon \implies \|(I_k \otimes a_i)h - (w_i^* \otimes I_H) h \|^2 < 2 \epsilon.\]
Similarly, 
\[ \| (I_k \otimes a_i)h - (w_i \otimes I_H) h \|^2 < 2 \epsilon.\]
Let $s_h: B(H) \to \bb C$ be the state defined by 
$$s_h(x) = \langle (I_k \otimes x) h \vert h \rangle \ \ \ (x\in B(H)).$$
Then, for every $x\in B(H)$ and  $i=1,\ldots,n$,
\begin{align*}
& \ \ \ \ \ |s_h(xa_i) - s_h(a_ix)| \\ &= | \langle (I_k \otimes xa_i)h - (I_k \otimes a_ix) h \vert h \rangle| \\ 
& \le | \langle (I_k \otimes xa_i)h - (I_k \otimes x)(w_i^* \otimes I_H)h \vert h \rangle| +  | \langle (w_i^* \otimes I_H)(I_k\otimes x)  h - (I_k \otimes a_ix) h \vert h \rangle \\
& = |\langle (I_k \otimes x)[I_k \otimes a_i -w_i^* \otimes I_H]h \vert h \rangle|  + | \langle (I_k \otimes x) h \vert [w_i \otimes I_H - I_k \otimes a_i^*] h \rangle | \\
& \le 2 \sqrt{2 \epsilon} \|x\|. 
\end{align*}
Taking a weak*-limit point, as $\epsilon \to 0$, of the set of such states $\{s_h\}$ yields an $\cl A$-hypertrace on $B(H)$ and hence, an amenable trace on $\cl A$.

\end{proof}

We can show the converse of the preceding result under the assumption that the operators are either isometries or co-isometries.  

\begin{thm}\label{T:amen trace imply max FJNR}
If $\cl A \subseteq B(H)$ has an amenable trace, then for every $n \in \bb N$ and every $n$-tuple of operators $(x_1,...,x_n)$ in $\cl A$ such that each $x_i$ is either an isometry or coisometry, we have that $w_{cb}(x_1,...,x_n) =n$.
\end{thm}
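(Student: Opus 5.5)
The plan is to use the finite-dimensional approximation in Theorem~\ref{T:Charcrization amenable trace-Kirchberg}(3). It suffices to treat the separable C$^*$-algebra generated by $x_1,\dots,x_n$: the restriction of an amenable trace to a C$^*$-subalgebra is again amenable, since the same hypertrace works. Note also that $w_{cb}(x_1,\dots,x_n)\le n$ always holds, so only the lower bound needs proof.

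Let $\tau$ be the amenable trace. Theorem~\ref{T:Charcrization amenable trace-Kirchberg}(3) gives u.c.p.\ maps $\phi_k:\cl A\to M_{d_k}$ that are asymptotically multiplicative in $\|\cdot\|_2$ and satisfy $tr_{d_k}\circ\phi_k\to\tau$. Unitality can be arranged by the standard perturbation, since $\phi_k(1)\to 1$ in $\|\cdot\|_2$. Put $B_i=\phi_k(x_i)$, which is a contraction.

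The key estimate concerns an isometry $x_i$. We have
\[
\|B_i^*B_i-1\|_2=\|\phi_k(x_i)^*\phi_k(x_i)-\phi_k(x_i^*x_i)\|_2\to 0 .
\]
Since $0\le B_i^*B_i\le 1$, this gives $tr(B_i^*B_i)\to 1$. For a coisometry we use $B_iB_i^*$ instead, which works equally well because $tr(B_i^*B_i)=tr(B_iB_i^*)$.

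Now choose the matrix coefficients $U_i=\overline{B_i}\in M_{d_k}$, the entrywise complex conjugate, which is again a contraction. Take the unit vector
\[
\xi=d_k^{-1/2}\sum_j e_j\otimes e_j\in \bb C^{d_k}\otimes\bb C^{d_k}.
\]
Then
\[
\langle (\overline{B_i}\otimes B_i)\xi,\xi\rangle=tr_{d_k}(B_i^*B_i)
\]
with the appropriate transpose convention; it is also possible to take $U_i=B_i^T$ and get $tr(B_iB_i^*)$. Hence
\[
w\Big(\sum_i \overline{B_i}\otimes B_i\Big)\ge \sum_i tr(B_i^*B_i)\to n .
\]
This quantity involves $B_i$ and not $x_i$, so it is a statement about $\mathrm{id}\otimes\phi_k$ applied to the element $\sum_i \overline{B_i}\otimes x_i$, whose numerical radius we want to control from below.

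The main obstacle is exactly this transfer: u.c.p.\ maps do not increase numerical radius, which is the wrong direction. I would fix it with the Stinespring dilation $\phi_k(a)=V^*\pi(a)V$, where $V:\bb C^{d_k}\to K$ is an isometry, so that
\[
\langle(\overline{B_i}\otimes B_i)\xi,\xi\rangle=\langle (\overline{B_i}\otimes\pi(x_i))\eta,\eta\rangle,\qquad \eta=(1\otimes V)\xi,\ \|\eta\|=1 .
\]
This gives $w\big(\sum_i\overline{B_i}\otimes\pi(x_i)\big)\ge\sum_i tr(B_i^*B_i)$. Two facts then finish the argument. First, $w_{cb}$ does not depend on the faithful representation, because of its characterization in Proposition~\ref{wcb-prop} via complete positivity in $M_p(C^*(\bb F_n))$ and hence via the operator system spanned by the $x_i$. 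Second, passing to a non-faithful representation $\pi$ can only decrease $w_{cb}$, since by Proposition~\ref{wcb-prop} a $*$-homomorphism preserves positivity of $I\otimes1+\sum U_i\otimes x_i+\sum U_i^*\otimes x_i^*$. Therefore $w_{cb}(x_1,\dots,x_n)\ge w_{cb}(\pi(x_1),\dots,\pi(x_n))\ge\sum_i tr(B_i^*B_i)\to n$, which proves the claim.
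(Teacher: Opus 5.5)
Your proof is correct, but it takes a genuinely different route from the paper's.

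\textbf{The paper's route.} The paper argues by contradiction directly from the hypertrace $\psi$. If $w_{cb}(x_1,\dots,x_n)=w<n$, Proposition~\ref{wcb-prop} gives a UCP map $\cl U_n\to\cl A$ with $E_{1,2,i}\mapsto x_i/2w$. Arveson's theorem extends it to a UCP map $\oplus_{i=1}^n M_2\to B(H)$. One then conjugates the positive Choi blocks $\begin{pmatrix} P_i & x_i/2w\\ x_i^*/2w & Q_i\end{pmatrix}$ by $\mathrm{diag}(-x_i^*,I)$, or by $\mathrm{diag}(I,-x_i)$ for coisometries, and applies $\psi$. This gives $\psi(P_i+Q_i)\ge 1/w$, which contradicts $\sum_i (P_i+Q_i)=I$. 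That argument uses only the definition of amenability and the duality $\cl U_n\cong\cl S_n^d$. It needs no separability and no approximation theorem, but it is non-constructive.

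\textbf{Your route.} You invoke the Connes--Kirchberg characterization, Theorem~\ref{T:Charcrization amenable trace-Kirchberg}(3). This is a much deeper input. It forces the reduction to the separable algebra $C^*(x_1,\dots,x_n)$, and it needs the extra facts that $w_{cb}$ is intrinsic and decreases under $*$-homomorphisms. In exchange you get explicit near-optimal witnesses: the contractions $\overline{\phi_k(x_i)}$ and the maximally entangled vector. Your argument is in effect a matricial version of the paper's own proof of (1)$\Rightarrow$(5) in Theorem~\ref{T:amne trace vs FJNR-all equivalent conditions}, combined with the first inequality of Proposition~\ref{P:FJNR vs tensor with op}. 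There $\overline{B_i}$ plays the role of $(x_i^*)^{op}$, and $\xi$ implements $tr_{d_k}$ in place of the state $a\otimes b^{op}\mapsto\tau(ab)$.

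\textbf{Two small points.}
\begin{enumerate}
\item Proposition~\ref{wcb-prop} is stated only for matrix tuples, so it is not the right citation for representation-independence and monotonicity. Justify these instead by noting that $w_{cb}(x_1,\dots,x_n)\le r$ iff $2r\,I_k\otimes 1+\sum_i (U_i\otimes x_i+U_i^*\otimes x_i^*)\ge 0$ in $M_k(\cl A)$ for every $k$ and all contractions $U_i\in M_k$. That condition is intrinsic to $\cl A$ and is preserved by $*$-homomorphisms.
\item Your aside that $U_i=B_i^T$ yields $tr(B_iB_i^*)$ is false: it yields $tr(B_i^2)$. You never use it, though. The identity you do use, $\langle(\overline{B_i}\otimes B_i)\xi,\xi\rangle=tr_{d_k}(B_i^*B_i)$, holds exactly, with no transpose convention needed.
\end{enumerate}
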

\begin{proof}
Let $\tau$ be an amenable trace on $\cl A,$ then it extends to a hypertrace  $\psi$ on $B(H)$.
Assume that $\cl A$ has an $n$-tuple as above with
$w_{cb}(x_1,...,x_n)=w <n.$ Then, by Proposition~\ref{wcb-prop}, we have a UCP map on $\phi:\cl U_n \to \cl A$ sending $E_{1,2,i}$ to $x_i/2w$ .  Since $B(H)$ is injective we can extend this map to a UCP map from $\oplus_{i=1}^n M_2$ to $B(H)$, which we still denote by $\phi$.

By setting $P_i = \phi(E_{1,1,i})$, $Q_i = \phi(E_{2,2,i})$, and applying Choi's theorem, we obtain that the operator matrix
\begin{align}\label{Eq:dilation isometry-I}
    \begin{pmatrix} P_i & x_i/2w \\x_i^*/2w & Q_i \end{pmatrix}
\end{align} 
is a positive operator.  Since $\phi$ is unital we have that
\begin{align}\label{Eq:sum to 1}
\sum_{i=1}^n P_i + \sum_{i=1}^n Q_i = I_{H}.  
\end{align} 
If $x_i$ is an isometry, conjugate this matrix by
\[ \begin{pmatrix} -x_i^* & 0\\ 0 & I_{H} \end{pmatrix}\] and take the sum of all the entries, to conclude that
\[x_i^*P_ix_i +Q_i - I_{H}/w \ge 0.\]
Now, since $\psi$ is a hyperstate, we get
\begin{align*} 
\psi(x_i^* P_i x_i) & = \psi(P_ix_ix_i^*) \\
&= \psi((x_ix_i^*)^{1/2} P_i (x_ix_i^*)^{1/2}) \\
& \le \psi((x_ix_i^*)^{1/2} P_i (x_ix_i^*)^{1/2}) + \psi( (1-x_ix_i^*)^{1/2} P_i (1 - x_ix_i^*)^{1/2}) \\
& = \psi(P_i). 
\end{align*}
Hence, $\psi(P_i+Q_i) \ge 1/w$.
Similarly, if $x_i$ is a coisometry, we conjugate the matrix \eqref{Eq:dilation isometry-I} by 
\[ \begin{pmatrix} I_{H} & 0 \\ 0 & -x_i \end{pmatrix},\]
and in the same manner, conclude that $\psi(P_i+ Q_i) \ge 1/w$.
Thus, by combining all of these with \eqref{Eq:sum to 1}, we obtain 
\[ 1 = \sum_{i=1}^n \psi(P_i+Q_i) \ge n/w > 1.\]
This contradiction shows that $w_{cb}(x_1,...,x_n) =n$ for every $n$ and every such $n$-tuple.
\end{proof}

We now combine the above results into the following theorem which summarizes the connections between the existence of amenable traces and this joint cb-numerical radius. We recall that for unital C*-algebra $\cl A\subseteq B(H)$ and $x\in \cl A$, $x^{op}$ denote the corresponding element in the opposite C*-algebra $\cl A^{op}$. A concrete representation of $\cl A^{op}$ into $B(\overline{H})$ is given by
\begin{align}\label{Eq:Concrete Rep of opposite algebra}
    A^{op}\ni x^{op}\rightsquigarrow  x^*\in B(\overline{H}).
\end{align}

\begin{thm}\label{T:amne trace vs FJNR-all equivalent conditions}
Let $\cl A$ be a unital C*-algebra. Then the following are equivalent:
\begin{enumerate}
\item $\cl A$ has an amenable trace,
\item for every $n$-tuple of unitaries $(u_1,\ldots,u_n)$ in $\cl A$ and for every $n \in \bb N$ we have that $w_{cb}(u_1,\ldots,u_n) =n$,
\item for every $n$-tuple $(x_1,\ldots,x_n)$ in $\cl A$ such that each $x_i$ is either an isometry or coisometry, we have that $w_{cb}(x_1,\ldots,x_n) =n$,
\item for every $n$-tuple of unitaries $(u_1,\ldots,u_n)$ in $\cl A$ and for every $n \in \bb N$ we have that
$w(\sum_{i=1}^n u_i \otimes (u_i^*)^{op}) =n$ as an element of $\cl A \otimes_{min} \cl A^{op}$,
\item for every $n$-tuple $(x_1,\ldots,x_n)$ in $\cl A$ such that each $x_i$ is either an isometry or coisometry, we have that
$w\left(\sum_{i=1}^n x_i \otimes (x_i^*)^{op}\right)=n$ as an element of $\cl A \otimes_{min} \cl A^{op}$.
\end{enumerate}
Moreover, if $\cl A$ is generated by $(x_1,\ldots,x_n)$ where each $x_i$ is either an isometry or coisometry, then $\cl A$ has an amenable trace if and only if $w_{cb}(x_1,\ldots,x_n) =n$.
\end{thm}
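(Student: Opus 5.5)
We will arrange the implications as $(1)\Rightarrow(3)\Rightarrow(2)\Rightarrow(1)$ and $(1)\Rightarrow(5)\Rightarrow(4)\Rightarrow(1)$.

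The first cycle comes almost for free. The implication $(1)\Rightarrow(3)$ is Theorem~\ref{T:amen trace imply max FJNR}, and $(3)\Rightarrow(2)$ holds because unitaries are isometries. For $(2)\Rightarrow(1)$ we cannot feed a finite tuple into Proposition~\ref{P:amenable trace gen by unitaries} directly, since $\cl A$ need not be finitely generated. Instead we use local approximation. Represent $\cl A\subseteq B(H)$ and recall that $\cl A$ is spanned by its unitaries. For each finite set $F=\{u_1,\ldots,u_n\}$ of unitaries, (2) gives $w_{cb}(u_1,\ldots,u_n)=n$. The proof of Proposition~\ref{P:amenable trace gen by unitaries} then produces, for each $\epsilon>0$, a state $s$ on $B(H)$ with $|s(xu_i)-s(u_ix)|\le 2\sqrt{2\epsilon}\|x\|$ for all $x\in B(H)$ and all $i$. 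Taking a weak$^*$ cluster point over the net indexed by pairs $(F,\epsilon)$ gives a state $\rho$ on $B(H)$ commuting with every unitary of $\cl A$, hence with all of $\cl A$. This $\rho$ is a hypertrace, and its restriction to $\cl A$ is an amenable trace.

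The second cycle goes through the minimal tensor product. Place $\cl A^{op}$ on $\overline H$ via \eqref{Eq:Concrete Rep of opposite algebra}, so that $x^{op}$ becomes $\bar{x}^{*}$, the operator $\overline{x^*}$ on $\overline H$. Then $\sum_i x_i\otimes (x_i^*)^{op}$ is represented by $\sum_i x_i\otimes \overline{x_i}$ on $H\otimes\overline H$.
\begin{itemize}
\item For $(5)\Rightarrow(4)$ there is nothing to prove.
\item For $(4)\Rightarrow(2)$, observe that $\sum u_i\otimes\overline{u_i}$ is a particular choice of coefficients in the definition of $w_{cb}$: the $\overline{u_i}$ are unitaries on a Hilbert space, and the spatial-tensor version of $w_{cb}$ is allowed. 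Hence $w_{cb}(u_1,\ldots,u_n)\ge w(\sum u_i\otimes (u_i^*)^{op})=n$, and the reverse inequality is trivial.
\item For $(1)\Rightarrow(5)$, use Theorem~\ref{T:Charcrization amenable trace-Kirchberg}(5). The amenable trace $\tau$ gives a state $\phi$ on $\cl A\otimes_{min}\cl A^{op}$ with $\phi(a\otimes b^{op})=\tau(ab)$. Then
\[
\phi\Big(\sum_i x_i\otimes (x_i^*)^{op}\Big)=\sum_i\tau(x_ix_i^*)=n,
\]
since $\tau(x_ix_i^*)=\tau(x_i^*x_i)=1$ whether $x_i$ is an isometry or a coisometry. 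Hence the numerical radius, taken over states, is at least $n$; it is at most $n$ since the norm is at most $n$.
\end{itemize}
Separability in Theorem~\ref{T:Charcrization amenable trace-Kirchberg} is a side issue: equivalence $(1)\Leftrightarrow(5)$ holds in general, or one can restrict to the separable C$^*$-subalgebra generated by the $x_i$ and use the hypertrace restricted there.

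The ``moreover'' statement combines Theorem~\ref{T:amen trace imply max FJNR} for necessity with Proposition~\ref{P:amenable trace gen by unitaries} for sufficiency, since isometries and coisometries are contractions. The main delicate point is the net argument in $(2)\Rightarrow(1)$. We must check that the estimate in Proposition~\ref{P:amenable trace gen by unitaries} depends only on $\epsilon$ and not on the tuple, so that a single cluster point approximately commutes with every unitary. The estimate $2\sqrt{2\epsilon}\|x\|$ is indeed uniform, so this should go through.
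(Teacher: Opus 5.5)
Your proposal is correct and follows the paper's proof essentially step for step. You get (1)$\Rightarrow$(3) from Theorem~\ref{T:amen trace imply max FJNR}, use the trivial implications (3)$\Rightarrow$(2) and (5)$\Rightarrow$(4)$\Rightarrow$(2), obtain (1)$\Rightarrow$(5) from Kirchberg's state $\phi(a\otimes b^{op})=\tau(ab)$, and prove (2)$\Rightarrow$(1) by a weak$^*$ cluster point over finite sets of unitaries. The differences are cosmetic: you fold the $\epsilon$-limit of Proposition~\ref{P:amenable trace gen by unitaries} into a single net indexed by $(F,\epsilon)$ instead of first producing a hyperstate $\tau_F$ for each $\cl A_F$, and you add justifications of (4)$\Rightarrow$(2) and of the separability hypothesis in Theorem~\ref{T:Charcrization amenable trace-Kirchberg} that the paper leaves implicit.
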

\begin{proof} 

By Theorem \ref{T:amen trace imply max FJNR}, (1) implies (3). Clearly, (5) implies (4), (4) implies (2), and (3) implies (2).
So it remains to show that (2) implies (1) and (1) implies (5).

Suppose that (2) holds. Let $\cl U(\cl A)$ be the set of all unitaries in $\cl A$, and let $\cl F$ be the directed set of all finite subsets of $\cl U(\cl A)$. 
For every $F\in \cl F$, let $\cl A_F$ be the unital C*-subalgebra of $\cl A$ generated by $F$. By our assumption and Proposition \ref{P:amenable trace gen by unitaries}, $\cl A_F$ has an amenable trace so that, by considering a concrete representation $\cl A\subseteq B(H)$, $\cl A_F$ has a hyperstate $\tau_F$ on $B(H)$. This implies that for every $u\in \cl U(\cl A)$
\begin{align}
   \text{weak}^*-\lim_{F\to \infty} u\cdot \tau_F-\tau_F \cdot u=0 ,
\end{align}
where $u \cdot \tau_F(x) = \tau_F(xu)$ and $\tau_F \cdot u(x) = \tau_F(ux)$,
and
where we are viewing $(\cl F, \subseteq)$ as the standard net on $\cl F$. We can then conclude that if we let $\tau$ to be a weak$^*$-cluster point of 
$\{\tau_F: F\in \cl F \}$ in $B(H)^*$, then $\tau$ is a hyperstate on $\cl A$.  

Now suppose that (1) holds. Let $\tau$ be an amenable trace of $\cl A$. Then, by Kirchberg's theorem (Theorem \ref{T:Charcrization amenable trace-Kirchberg}), there is a state
$\phi: \cl A \otimes_{min} \cl A^{op} \to \bb C$ given by 
$$\phi(a \otimes b^{op}) = \tau(ab)\ \ \ \ (a,b\in \cl A).$$
Hence, for every $n$-tuple $(x_1,\ldots,x_n)$ in $\cl A$ such that each $x_i$ is either an isometry or coisometry, we have 
\[ \phi\left(\sum_{i=1}^n x_i \otimes (x_i^*)^{op}\right) = \sum_{i=1}^n \tau(x_ix_i^*) =n,\]
since $\tau(x_ix_i^*) = \tau(x_i^*x_i) = \tau(1)=1$. Thus (5) holds.

Finally, we state that the last statement of the theorem follows from Proposition \ref{P:amenable trace gen by unitaries}  and Theorem \ref{T:amen trace imply max FJNR}. 
\end{proof}

Combining the above result with Connes' deep work on injective factors, we get the following numerical radius results for a finite von Neumann algebra.
\begin{cor}
Let $(\cl M, \tau)$ be a finite von Neumann algebra with a normalized trace $\tau$. Then the following statements are equivalent
\begin{enumerate}
\item $\cl M$ is an injective factor.
\item $\tau$ is amenable.
\item For every $n \in \bb N$ and every $n$-tuple of  unitary operators $(u_1,\ldots,u_n)$ in $\cl M$, we have 
$\|\sum_{i=1}^n \bar{u_i}\otimes u_i\|_{\min}=n.$
\item  For every $n \in \bb N$ and every $n$-tuple of  unitary operators $(u_1,\ldots,u_n)$ in $\cl M$, we have 
$w_{cb}(u_1,\ldots,u_n) =n$.
\item For every $n \in \bb N$ and every $n$-tuple of unitary operators $(u_1, \dots, u_n)$ in $\cl M$, we have
$w(\sum_{i=1}^n (u_i^*)^{op} \otimes u_i) =n$ in $\cl M^{op} \otimes_{min} \cl M$.
\end{enumerate}
\end{cor}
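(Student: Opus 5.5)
The plan is to treat the corollary as a cycle of implications, most of which are already contained in Theorem~\ref{T:amne trace vs FJNR-all equivalent conditions}. The new inputs are Connes' theorem, for (1)$\iff$(2), and Kirchberg's characterization (Theorem~\ref{T:Charcrization amenable trace-Kirchberg}), for (3). Statement (1) should be read as ``$\cl M$ is injective'' (equivalently hyperfinite), and I will treat it that way.

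\textbf{Step 1: (1)$\iff$(2).} Represent $\cl M$ standardly on $L^2(\cl M,\tau)$. If $\cl M$ is injective, there is a conditional expectation $E: B(L^2(\cl M))\to \cl M$. Then $\rho=\tau\circ E$ satisfies
\[\rho(aT)=\tau(aE(T))=\tau(E(T)a)=\rho(Ta) \quad (a\in\cl M),\]
so $\rho$ is a hypertrace and $\tau$ is amenable. Conversely, amenability of $\tau$ means exactly that $\tau$ has a hypertrace. By Connes' theorem, a finite von Neumann algebra with a faithful normal trace admitting such a hypertrace is injective; this is the standard route ``hypertrace $\Rightarrow$ property P or semidiscreteness $\Rightarrow$ injective''. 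I would simply cite \cite{Brown} or Connes for this direction. It is the main (deep) obstacle, but it is not reproved here.

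\textbf{Step 2: (2)$\iff$(4)$\iff$(5).} Apply Theorem~\ref{T:amne trace vs FJNR-all equivalent conditions} to the unital C*-algebra $\cl M$. Items (2), (4), (5) of the corollary are items (1), (2), (4) of that theorem, except that in (5) the tensor factors are written in the order $\cl M^{op}\otimes_{min}\cl M$. The flip $\cl M\otimes_{min}\cl M^{op}\cong \cl M^{op}\otimes_{min}\cl M$ is a $*$-isomorphism, so it preserves the numerical radius.

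\textbf{Step 3: (3).} Use the concrete representation \eqref{Eq:Concrete Rep of opposite algebra}, $x^{op}\mapsto x^*$ on $\overline H$, with $x^*$ there meaning the conjugate operator. Under it, $(u_i^*)^{op}$ becomes $\bar u_i$, and $\cl M^{op}\otimes_{min}\cl M$ is identified with the C*-algebra generated by $\bar{\cl M}\otimes\cl M$. So $\sum_i (u_i^*)^{op}\otimes u_i$ corresponds to $X:=\sum_i \bar u_i\otimes u_i$.

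For (2)$\Rightarrow$(3), Kirchberg's state $\phi$ from Theorem~\ref{T:Charcrization amenable trace-Kirchberg} gives
\[n=\phi\Bigl(\sum_i u_i\otimes(u_i^*)^{op}\Bigr)\le w(X)\le\|X\|\le n.\]

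For (3)$\Rightarrow$(2), I need a sharpening. Since $\|X\|=n$ is a sum of $n$ unitaries, there are unit vectors $\xi_k,\eta_k$ with $\langle (\bar u_i\otimes u_i)\xi_k,\eta_k\rangle\to 1$ for every $i$. This gives $\|(\bar u_i\otimes u_i)\xi_k-\eta_k\|\to 0$, and hence $\|(\bar u_i\otimes u_i)\xi_k-(\bar u_j\otimes u_j)\xi_k\|\to 0$. Replacing the tuple by $(1,u_1,\dots,u_n)$, whose norm is still maximal, I can take $\eta_k\approx\xi_k$, so $\xi_k$ is almost invariant for every $\bar u_i\otimes u_i$.

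Now identify $\overline H\otimes H$ with the Hilbert–Schmidt operators. Then $\xi_k$ corresponds to $T_k$ with $\|u_iT_ku_i^*-T_k\|_2\to0$. The vector states $x\mapsto \mathrm{tr}(T_k^*xT_k)$ are then almost $u_i$-central, so by the argument of Proposition~\ref{P:amenable trace gen by unitaries} a weak$^*$ limit gives a hypertrace for the C*-algebra generated by the $u_i$. Running this over finite sets of unitaries, as in the proof of (2)$\Rightarrow$(1) of Theorem~\ref{T:amne trace vs FJNR-all equivalent conditions}, produces an amenable trace on $\cl M$.

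Finally, to recover amenability of the given trace $\tau$ itself when $\cl M$ is not a factor, I would use Connes' theorem once more: the existence of such an almost-invariant sequence forces $\cl M$ to be injective, which gives (1) and hence (2). I expect this last identification, getting the specified $\tau$ rather than merely some amenable trace, to be the main technical point outside the cited deep theorem.
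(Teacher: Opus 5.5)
There is a genuine gap, and it is the point you flag yourself at the end. Conditions (3), (4), (5) only ever produce \emph{some} amenable tracial state on $\cl M$. The hypertrace $\rho$ you build from almost-invariant Hilbert--Schmidt operators has no reason to restrict to the given $\tau$. The same conflation already occurs in your Step 2: there you match ``$\tau$ is amenable'' with item (1) of Theorem~\ref{T:amne trace vs FJNR-all equivalent conditions}, but that item only says that $\cl M$ has an amenable trace.

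Your proposed repair is that an almost-invariant sequence for the $\bar u_i\otimes u_i$ forces $\cl M$ to be injective. That is false once you read (1) as mere injectivity and allow non-factors. Take $\cl M=M_2(\bb C)\oplus L(\bb F_2)$ with the faithful normal trace $\tau=\frac12\mathrm{tr}_2\oplus\frac12\tau_{\bb F_2}$.
\begin{itemize}
\item For unitaries $u_i=v_i\oplus w_i$, the subspace $\overline{\bb C^2}\otimes\bb C^2$ reduces $\sum_i\bar u_i\otimes u_i$, and $\sum_j\bar e_j\otimes e_j$ is fixed by every $\bar v_i\otimes v_i$. So (3) holds.
\item The trace $(a,b)\mapsto\mathrm{tr}_2(a)$ is amenable: it has the hypertrace $T\mapsto\mathrm{tr}_2(PTP)$, where $P$ is the projection onto the $\bb C^2$ summand. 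Hence (4) and (5) hold by Theorem~\ref{T:amne trace vs FJNR-all equivalent conditions}.
\item $\cl M$ is not injective, because its corner $L(\bb F_2)$ is not.
\item $\tau$ is not amenable. A hypertrace for $\tau$ would compress, via the central projection onto the second summand, to a hypertrace for $\tau_{\bb F_2}$, and hence to an invariant mean on $\bb F_2$.
\end{itemize}
So under your reading the implications (3), (4), (5)$\Rightarrow$(2) are simply false, and no argument can close them.

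The missing ingredient is factoriality. The literal statement presupposes it: item (1) says ``injective factor'', and for $\bb C^2$ item (2) holds while (1) fails. It is also the setting of Connes' theorem, which the paper cites for (1)--(3). If $\cl M$ is a finite factor, the Dixmier approximation theorem puts $\tau(x)1$ in the norm-closed convex hull of $\{uxu^*\}$. Hence $\tau$ is the unique tracial state on $\cl M$, normal or not, and any amenable trace on $\cl M$ is $\tau$.

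With this one remark your Step 2 becomes exactly the paper's argument for (2)$\Leftrightarrow$(4)$\Leftrightarrow$(5). Your Step 3 then becomes a valid and more self-contained route to (3)$\Rightarrow$(2) than the paper's bare citation of Connes:
\begin{itemize}
\item The tuple $(1,u_1,\dots,u_n)$ gives unit Hilbert--Schmidt operators $T_k$ with $\|u_iT_ku_i^*-T_k\|_2\to0$.
\item The estimate $\|u_i^*T_kT_k^*u_i-T_kT_k^*\|_1\le 2\|u_iT_ku_i^*-T_k\|_2$ makes the states $x\mapsto\mathrm{tr}(xT_kT_k^*)$ asymptotically $u_i$-central.
\item The resulting amenable trace on $\cl M$ must be $\tau$.
\end{itemize}
After that, the only deep input you still need is Connes' implication (2)$\Rightarrow$(1).
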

\begin{proof}
The equivalence from (1)-(3) is due to Connes \cite{Connes-76}. The equivalence between (2) and (4)-(5) follows from Theorem \ref{T:amne trace vs FJNR-all equivalent conditions}.
\end{proof}
In the above results we have seen the consequences of $w_{cb}(u_1,...,u_n)$ attaining its upper bound of $n$. Now we establish a lower bound.

\begin{prop}\label{P:min value-sum of unitary tensors}
Let $(u_1,\ldots,u_n)$ be an n-tuple of unitaries on a Hilbert space $H$. Then 
\begin{align}
\sqrt{2n-1}\leq w\left(\sum_{i=1}^n u_i \otimes (u_i^*)^{op}\right)\leq w_{cb}(u_1,\ldots,u_n) .
\end{align}
Moreover, if the unital C*-algebra $\cl A$ generated by $u_1,\ldots,u_n$ has a faithful trace $\tau$, then the following are equivalent:
\begin{enumerate}
\item $w_{cb}(u_1,\ldots,u_n)=\sqrt{2n-1}$; 
\item $w \left(\sum_{i=1}^n u_i \otimes (u_i^*)^{op}\right)=\sqrt{2n-1}$;
\item the reduced C*-algebra, $C^*_{\lambda}(\mathbb{F}_n)$, embeds into $\cl A$ taking $s_i$ into $u_i$, where $\{s_1,\ldots,s_n\}$ are the standard generators of $\mathbb{F}_n$.
\end{enumerate}
\end{prop}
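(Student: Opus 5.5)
We split the argument into the two-sided estimate, followed by the equivalences under the faithful trace. Throughout, $\cl A\otimes_{min}\cl A^{op}$ is represented on $H\otimes\overline H$ via \eqref{Eq:Concrete Rep of opposite algebra}. In that representation $T:=\sum_i u_i\otimes (u_i^*)^{op}$ becomes $\sum_i u_i\otimes \bar u_i$, where $\bar u_i\in B(\overline H)$ is again unitary. So $T=\sum_i\sigma(s_i)$ for the unitary representation $\sigma=\pi\otimes\bar\pi$ of $\bb F_n$, where $\pi(s_i)=u_i$.

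The upper bound $w(T)\le w_{cb}(u_1,\ldots,u_n)$ follows from the remark after Definition~\ref{D:free joint numerical radius}: we may take $U_i=\bar u_i\in B(\overline H)$ with the spatial tensor product, after flipping the tensor factors.

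For the lower bound we use $w(T)\ge \|\mathrm{Re}\,T\|=\tfrac12\|\sigma(\mu)\|$, where $\mu=\sum_i(\delta_{s_i}+\delta_{s_i^{-1}})$ is a positive, symmetric element of $\ell^1(\bb F_n)$. The key observation is that $\sigma$ has nonnegative coefficients along suitable vectors. If $x\in B(H)$ is positive and finite rank with $\mathrm{tr}(x^2)=1$, then $x$ is a unit vector in $H\otimes\overline H$, and
\[\langle \sigma(g)x,x\rangle=\mathrm{tr}(x\pi(g)x\pi(g)^*)=\|x^{1/2}\pi(g)x^{1/2}\|_2^2\ge 0 .\]
Since $\mu^{*2k}\ge 0$, this gives
\[\|\sigma(\mu)\|^{2k}\ge \sum_g\mu^{*2k}(g)\langle\sigma(g)x,x\rangle\ge \mu^{*2k}(e).\]
By Kesten's theorem, $\limsup_k \mu^{*2k}(e)^{1/2k}=\|\lambda(\mu)\|=2\sqrt{2n-1}$. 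Hence $w(T)\ge\sqrt{2n-1}$.

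For the equivalences we first record that $w(\sum_i\lambda(s_i))=\sqrt{2n-1}$. Indeed, $s_i\mapsto e^{i\theta}s_i$ induces an automorphism of $C^*_\lambda(\bb F_n)$, so every $\mathrm{Re}(e^{i\theta}\sum_i\lambda(s_i))$ has norm $\tfrac12\|\lambda(\mu)\|$.

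For (3)$\Rightarrow$(1): injective $*$-homomorphisms are isometric on minimal tensor products, so $w_{cb}(u_1,\ldots,u_n)=w_{cb}(\lambda(s_1),\ldots,\lambda(s_n))$. By Fell's absorption principle, for contractions $U_i$ (after dilating them to unitaries) $\sum_i U_i\otimes\lambda(s_i)$ is unitarily equivalent to an ampliation of $\sum_i\lambda(s_i)$. Hence this quantity equals $\sqrt{2n-1}$. Then (1)$\Rightarrow$(2) is immediate from the two-sided inequality.

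The main obstacle is (2)$\Rightarrow$(3). Represent $\cl A$ faithfully on $L^2(\cl A,\tau)$, which is faithful since $\tau$ is faithful, and let $\xi$ be the cyclic trace vector. Taking $x=|\xi\rangle\langle\xi|$ above, the positive definite function
\[\psi(g)=|\tau(\pi(g))|^2=\langle\sigma(g)x,x\rangle\]
satisfies $\psi\ge 0$, $\psi(e)=1$, and, using (2),
\[\sum_g\mu^{*2k}(g)\psi(g)\le (2\sqrt{2n-1})^{2k}\quad\text{for all } k .\]
From this we must conclude that $\tau(\pi(g))=0$ for all $g\ne e$. Once that holds, $\tau\circ\pi$ is the canonical trace on $\bb F_n$. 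Then the GNS representation of $\tau$ restricted to $C^*(\pi(\bb F_n))=\cl A$ is the left regular representation, and faithfulness of $\tau$ yields $\cl A\cong C^*_\lambda(\bb F_n)$ with $s_i\mapsto u_i$.

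To prove the vanishing, we would argue by contradiction. Suppose $\psi(g_0)=c>0$ for some $g_0\ne e$. By positive definiteness and the trace property ($\psi$ is conjugation invariant), $\psi$ stays bounded below on a subset of the words counted by $\mu^{*2k}$ that grows too fast. We would then show that this set carries exponentially more $\mu^{*2k}$-mass than $\mu^{*2k}(e)$, violating the bound above. We would try to get this growth from the fact that the returns to the conjugacy class of $g_0$ in the simple random walk have the same exponential rate $2\sqrt{2n-1}$ as returns to $e$, but with a polynomial factor large enough to force a contradiction. If this fails, we would fall back on a Haagerup-type inequality to show directly that $\psi$ is $\ell^2$-summable.
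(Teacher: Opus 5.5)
You prove the two-sided estimate and the implications (3)$\Rightarrow$(1)$\Rightarrow$(2) correctly, and more self-containedly than the paper. The paper quotes Pisier's theorem for $\|\sum_i u_i\otimes (u_i^*)^{op}+u_i^*\otimes u_i^{op}\|\ge 2\sqrt{2n-1}$. You instead derive it from $\langle\sigma(g)x,x\rangle=\|x^{1/2}\pi(g)x^{1/2}\|_2^2\ge 0$ and Kesten's theorem, and you use Fell absorption and the gauge automorphism where the paper appeals to [DPR].

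\textbf{The gap is (2)$\Rightarrow$(3)}, which is the real content of the ``moreover'' part. Your reduction to the vanishing $\tau(\pi(g))=0$ for $g\neq e$ is fine, and so is your deduction of (3) from it. But the vanishing itself is never proved, and the counting strategy you outline cannot give it. Centrality gives $\psi=c$ on the conjugacy class $C$ of $g_0$, and you offer no mechanism that forces lower bounds on a larger set.

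For $n\ge 2$, the $\mu^{*2k}$-mass of a single nontrivial conjugacy class (or any finite union of them) is at most of order $(2\sqrt{2n-1})^{2k}k^{-1/2}$. This follows from $\sum_k\mu^{*k}(g)z^k=G(z)F(z)^{|g|}$, the value $F\big(1/(2\sqrt{2n-1})\big)=(2n-1)^{-1/2}$, and the fact that $C$ has of order $(2n-1)^{j}$ elements of length $|g_0|+2j$. That mass is only polynomially larger than $\mu^{*2k}(e)\asymp(2\sqrt{2n-1})^{2k}k^{-3/2}$, not exponentially larger as you hope. It is still $o\big((2\sqrt{2n-1})^{2k}\big)$, so the constraint $\sum_g\mu^{*2k}(g)\psi(g)\le(2\sqrt{2n-1})^{2k}$ is never violated this way.

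The Haagerup-type fallback is circular. Every nontrivial conjugacy class of $\mathbb{F}_n$ is infinite, so a central $\psi\in\ell^2(\mathbb{F}_n)$ already vanishes off $e$; proving $\ell^2$-summability is the same problem.

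What is missing is a genuine rigidity theorem for the equality case of Pisier's inequality, and the paper does not prove one either. It imports it from Cadilhac--Collins (Theorem~1.1 there): if $\mathcal{A}$ carries a faithful trace and $\|\sum_i u_i\otimes(u_i^*)^{op}+u_i^*\otimes u_i^{op}\|=2\sqrt{2n-1}$, then $\tau(\pi(g))=\delta_{g,e}$, i.e. $L(\mathbb{F}_n)$ embeds with $s_i\mapsto u_i$. With that result in hand, (2) gives $\|\mathrm{Re}\,T\|\le w(T)=\sqrt{2n-1}$, your lower bound forces equality, and your final GNS argument finishes. Without citing or reproving such a result, your proposal leaves (2)$\Rightarrow$(3), and hence (1)$\Rightarrow$(3), unproved.

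This step must also use $n\ge 2$ essentially. For $n=1$ and $u_1=1\in\mathbb{C}$, conditions (1) and (2) hold but (3) fails.
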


\begin{proof}
Let us first recall that the numerical range of a self-adjoint operator is the same as its operator norm. Thus we have
\begin{align*}
2 w_{cb}(u_1,\ldots,u_n) &\geq 2w\left(\sum_{i=1}^n u_i \otimes (u_i^*)^{op}\right)\\
& \geq w\left(\sum_{i=1}^n u_i \otimes (u_i^*)^{op}+u_i^* \otimes (u_i)^{op}\right)\\ 
& =\left\|\sum_{i=1}^n u_i \otimes (u_i^*)^{op}+u_i^* \otimes (u_i)^{op}\right\|\\ 
&\geq 2\sqrt{2n-1},
\end{align*}
where the last inequality follows from \cite[Theorem 1]{Pisier-1997}. This proves the inequality. Furthermore, it is shown in \cite[Theorem 1.1]{Cad-Collins} that, when $\cl A$
has a faithful trace $\tau$, then 
\begin{align}\label{Eq:min value-sum of unitary tensors}
\left\|\sum_{i=1}^n u_i \otimes (u_i^*)^{op}+u_i^* \otimes (u_i)^{op}\right\|=2\sqrt{2n-1}
\end{align} 
if and only if the group von Neumann algebra $L(\mathbb{F}_n)$ embeds into the finite von Neumann algebras generated by GNS construction of $\tau$ mapping each $s_i$ to $u_i$. This, together with \cite[Proposition 2.4]{DPR}, yields the equivalent of (1)-(3).  
\end{proof}

\begin{rem} 
(i) In the preceding proposition, the assumption of the existence of a faithful tracial state on $\cl A$ is necessary and cannot be dropped. Indeed, it is pointed out in \cite[p.3]{Cad-Collins} that, based on a counterexample that appeared in Franz Lehner's Ph.D. thesis (\cite[p.51]{Leh}), if $\cl A$ does not have a faithful trace, then the relations \eqref{Eq:min value-sum of unitary tensors} does not necessarily imply that $u_1,\ldots,u_n$ are free Haar unitaries. In such cases, $C^*_{\lambda}(\bb F_n)$ need not embed into $\cl A$ so that either (1) or (2) may not imply (3) in Proposition \ref{P:min value-sum of unitary tensors}.

(ii) We have seen in both Theorem \ref{T:amne trace vs FJNR-all equivalent conditions} and Proposition \ref{P:min value-sum of unitary tensors} that, for any $n$-tuple of unitaries $u_1,\ldots,u_n$ on a Hilbert space $H$, the quantities $w\left( \sum_{i=1}^n u_i \otimes (u_i^*)^{op}\right)$ and $w_{cb}(u_1,\ldots,u_n)$ obtain their minimal or maximal value at the same time. However, in Proposition \ref{P:failure FJNR equal tensor with op}, we give an example of an $n$-tuple of unitaries $u_1,\ldots,u_n$ such that
\[ w\left( \sum_{i=1}^n u_i \otimes (u_i^*)^{op}\right) < w_{cb}(u_1,\ldots,u_n),\]
so that these two quantities need not always coincide. On the other hand, we will show in Proposition \ref{P:FJNR vs tensor with op} that there is a general relation between these two quantities.   
\end{rem}

It would be interesting to compute the exact value, or at least approximate the joint numerical radius of $n$-tuples of operators when they do not obtain the maximal or minimal values. We finish this section by computing one such case for the Cuntz algebras.

\begin{prop}
Let $n\in \mathbb{N}$, and let $x_1,\ldots, x_n$ be isometries that generate the  Cuntz algebra $\cl O_n$. Then $w_{cb}(x_1,\ldots,x_n)=\sqrt{n}$.
\end{prop}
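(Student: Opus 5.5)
The plan is to prove the two inequalities $w_{cb}(x_1,\ldots,x_n)\le\sqrt n$ and $w_{cb}(x_1,\ldots,x_n)\ge\sqrt n$ separately. We use only the Cuntz relations $x_i^*x_j=\delta_{ij}1$ and $\sum_{i=1}^n x_ix_i^*=1$. Since both arguments are carried out in whatever concrete representation $\cl O_n\subseteq B(H)$ is given, no representation-independence issue arises.

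\emph{Upper bound.} Fix $k$ and contractions $U_1,\ldots,U_n\in M_k$, and set $T=\sum_{i=1}^n U_i\otimes x_i$. The operators $U_i\otimes 1$ and $1\otimes x_i$ commute, so $T$ factors as a row times a column:
\[
T=\begin{pmatrix} 1\otimes x_1 & \cdots & 1\otimes x_n\end{pmatrix}\begin{pmatrix} U_1\otimes 1\\ \vdots \\ U_n\otimes 1\end{pmatrix}.
\]
The row has norm $\|\sum_i 1\otimes x_ix_i^*\|^{1/2}=1$ by the Cuntz relation. The column has norm $\|\sum_i U_i^*U_i\|^{1/2}\le\sqrt n$. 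Hence $w(T)\le\|T\|\le\sqrt n$, and taking the supremum gives $w_{cb}(x_1,\ldots,x_n)\le\sqrt n$.

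\emph{Lower bound.} Take $k=1$ and the scalar contractions $U_i=1$, so $T=\sum_i x_i=\sqrt n\, v$ with $v=n^{-1/2}\sum_i x_i$. By the relations $x_i^*x_j=\delta_{ij}$ we get $v^*v=1$, so $v$ is an isometry. We claim $v$ is not unitary when $n\ge 2$. Indeed, $x_j^*v=n^{-1/2}$ for each $j$. If $vv^*=1$, then $x_j=vv^*x_j=v(x_j^*v)^*=n^{-1/2}v$, and therefore $1=x_j^*x_j=1/n$, a contradiction. (The case $n=1$ is trivial: $x_1$ is then a unitary and both bounds give $1$.)

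By the Wold decomposition, a non-unitary isometry has a unilateral shift as a direct summand. The numerical range of the unilateral shift is the open unit disc, so $w(v)=1$. Hence $w_{cb}(x_1,\ldots,x_n)\ge w(T)=\sqrt n\,w(v)=\sqrt n$.

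The only step needing any care is showing that $w(v)=1$, i.e.\ that the isometry $v$ is proper, which is the short computation above. The remainder is the row/column factorization, which is routine.
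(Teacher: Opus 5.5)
Your proof is correct. The upper bound is essentially the paper's, and the lower bound takes a different route.

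\textbf{Upper bound.} The paper applies Cauchy--Schwarz to a state and uses $x_i^*x_j=\delta_{ij}$ to get $T^*T=\sum_i U_i^*U_i\otimes 1$. You instead factor $T$ as a row times a column and use $\sum_i x_ix_i^*=1$. Both give the same estimate $\|T\|\le\sqrt n$, packaged differently.

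\textbf{Lower bound.} The paper passes to the concrete model of $\cl O_n$ on $L^2(0,1)$. There the constant function $g=1_{(0,1)}$ is a fixed vector of $n^{-1/2}\sum_i x_i$, so the value $\sqrt n$ is attained exactly. This is explicit. However, transferring it to an arbitrary generating tuple tacitly needs two facts: Cuntz's uniqueness theorem, and invariance of $w_{cb}$ (a supremum of state values) under $*$-isomorphisms. Your argument works directly from the relations in whatever representation is given, so it avoids that transfer step entirely.

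\textbf{Simplification.} Your stated ``only step needing care'' is not needed. Every isometry $v$ satisfies $\|v^k\|=1$ for all $k$, so its spectral radius is $1$. Since $r(v)\le w(v)\le\|v\|$, it follows that $w(v)=1$ whether or not $v$ is unitary. So you can drop the computation showing $v$ is not unitary, the Wold decomposition, and the separate case $n=1$. The whole lower bound reduces to the observation that $n^{-1/2}\sum_i x_i$ is an isometry.
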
	

\begin{proof}
It is well-known that Cuntz isometries satisfy $x_i^*x_j=0 \ \text{if} \ \ 1\leq i\neq j\leq n$. 
Thus, for any unitaries $U_1,\dots, U_n \in M_k$ and a state $\psi$ on $\cl O_n\otimes M_k$, we have (by the Cauchy-Schwartz inequality)
$$\left|\psi\left(\sum_{i=1}^n x_i\otimes U_i\right)\right|\leq  \psi\left(\sum_{i=1}^n x_i^*x_i\otimes 1\right)^{1/2}=\psi(n)^{1/2}=\sqrt{n}.$$
Hence, it follows from Definition \ref{D:free joint numerical radius} that  $w_{cb}(x_1,\ldots,x_n)\leq \sqrt{n}$.

On the other hand, we can view $\cl O_n$ as the unital C*-algebra in $B(L^2(0,1))$ generated by Cuntz isometries ($f\in L^2(0,1)$, $i=1,\ldots,n$)
$$
\begin{array}{cc}
x_i f(x)= \bigg\{  
    \begin{array}{cc}
      \sqrt{n} f(nx + 1 - i)\  &  (i-1)/n \leq x \leq i/n \\
        0 & \text{otherwise}
    \end{array}
\end{array}
$$
Then, by putting $g=1_{(0,1)}$, we get
$$\langle (x_1 + \dots + x_n) g,g \rangle = \sum_{i=1}^n \frac{\sqrt{n}}{n} = \sqrt{n}.$$
Hence $\sqrt{n}=w\left(\sum_{i=1}^n x_i\right)\leq w_{cb}(x_1,\ldots,x_n).$ This completes the proof. 
\end{proof}

\section{C*-algebras generated by partial isometries}

There is an extensive literature on graph C*-algebras and these C*-algebras are generated by families of partial isometries. In this section we turn our attention to obtaining results giving necessary and sufficient conditions for algebras generated by partial isometries to possess an amenable trace. We first need some preliminary results that we will use later. 


\begin{prop}\label{P:FJNR vs tensor with op}
Let $(x_1,\ldots,x_n)$ be an n-tuple of contractions on a Hilbert space $H$. Then 
\begin{align}\label{Eq:FJNR vs tensor with op}
w\left(\sum_{i=1}^n x_i \otimes (x_i^*)^{op}\right)\leq w_{cb}(x_1,\ldots,x_n) \leq \sqrt{n} w\left(\sum_{i=1}^n x_i \otimes (x_i^*)^{op}\right)^{1/2}.
\end{align}
\end{prop}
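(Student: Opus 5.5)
The plan is to prove the two inequalities separately. The left one is a special case of the definition of $w_{cb}$. The right one comes from writing the vector states that define $w_{cb}$ as traces and applying Cauchy--Schwarz twice: once in the Hilbert--Schmidt class, once in $\bb C^n$.

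\emph{Left inequality.} Use the concrete representation \eqref{Eq:Concrete Rep of opposite algebra}. Under it, $(x_i^*)^{op}$ becomes a contraction $y_i$ on $\overline{H}$; concretely $y_i$ is the conjugate operator $\bar x_i$, with $\langle \bar x_i\bar\xi,\bar\eta\rangle=\overline{\langle x_i\xi,\eta\rangle}$. The flip $H\otimes\overline H\to\overline H\otimes H$ is a unitary, so $w(\sum_i x_i\otimes y_i)=w(\sum_i y_i\otimes x_i)$. By the remark after Definition~\ref{D:free joint numerical radius}, $w_{cb}$ may be computed using contractions on an arbitrary Hilbert space, here $K=\overline H$, so this quantity is at most $w_{cb}(x_1,\ldots,x_n)$.

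\emph{Right inequality.} By compression we may take unitaries $U_i\in M_k$ and a unit vector $h\in\bb C^k\otimes H$. We must bound $|\sum_i\langle (U_i\otimes x_i)h,h\rangle|$.
\begin{itemize}
\item Identify $h$ with a Hilbert--Schmidt operator $K\colon H\to\bb C^k$ with $\operatorname{tr}(K^*K)=1$. Then $(U\otimes x)h$ corresponds to $UKx^{t}$ for a suitable transpose/conjugate convention; the convention is immaterial, because $x\mapsto\bar x$ preserves everything used below. Each term becomes $\operatorname{tr}(U_iKx_i'K^*)$, where $x_i'$ denotes $x_i$ or its transpose.
\item Write the polar decomposition $K=V\rho^{1/2}$, where $\rho=K^*K$ is a density operator on $H$. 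The term is $\operatorname{tr}(W_i\rho^{1/2}x_i'\rho^{1/2})$ with $W_i=V^*U_iV$ a contraction.
\item Apply Hilbert--Schmidt Cauchy--Schwarz:
\[
|\operatorname{tr}(W_i\rho^{1/2}x_i'\rho^{1/2})| = |\operatorname{tr}\big((\rho^{1/4}W_i\rho^{1/4})(\rho^{1/4}x_i'\rho^{1/4})\big)| \le \|\rho^{1/4}W_i\rho^{1/4}\|_2\,\|\rho^{1/4}x_i'\rho^{1/4}\|_2 .
\]
\item For the first factor, $\|\rho^{1/4}W\rho^{1/4}\|_2^2=\operatorname{tr}(\rho^{1/2}W\rho^{1/2}W^*)\le\|\rho^{1/2}\|_2\|W\rho^{1/2}W^*\|_2\le1$.
\item For the second factor, $a_i:=\|\rho^{1/4}x_i'\rho^{1/4}\|_2^2=\operatorname{tr}(\rho^{1/2}x_i'\rho^{1/2}x_i'^*)$. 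Let $\xi\in H\otimes\overline H$ be the unit vector corresponding to the Hilbert--Schmidt operator $\rho^{1/2}$. Then $a_i=\langle (x_i\otimes\bar x_i)\xi,\xi\rangle$, which is a vector state of $x_i\otimes (x_i^*)^{op}$ in the representation \eqref{Eq:Concrete Rep of opposite algebra}.
\end{itemize}
Since each $a_i\ge0$, combining these gives
\[
\Big|\sum_i\langle (U_i\otimes x_i)h,h\rangle\Big|\le\sum_i a_i^{1/2}\le\sqrt n\Big(\sum_i a_i\Big)^{1/2}\le\sqrt n\, w\Big(\sum_i x_i\otimes (x_i^*)^{op}\Big)^{1/2}.
\]
Taking the supremum over all $U_i$ and $h$ proves the right inequality.

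The main obstacle is the bookkeeping in the second bullet and the last bullet. One has to fix a transpose/conjugate convention and check that $\operatorname{tr}(\rho^{1/2}x\rho^{1/2}x^*)$ really is a vector state of $x\otimes\bar x$, i.e.\ of $x\otimes (x^*)^{op}$, under \eqref{Eq:Concrete Rep of opposite algebra}. I would check this first on rank-one $\rho$ and then extend by linearity. The symmetric splitting $\rho^{1/4}\cdot\rho^{1/4}$ is the key step: splitting $\rho^{1/2}$ asymmetrically gives only the trivial bound $\sum_i\operatorname{tr}(\rho x_i^*x_i)\le n$, which does not involve the opposite algebra at all.
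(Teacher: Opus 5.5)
Your proof is correct, and it repairs an error in the paper's proof.

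\textbf{Left inequality.} This matches the paper. Your choice $\overline{x_i}$, the image of $(x_i^*)^{op}$ on $\overline H$, is the right one; the paper's ``$U_i=(x_i)^{op}$'' is a slip.

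\textbf{Right inequality.} You share the paper's skeleton: Cauchy--Schwarz, then recognizing a sum of squared matrix coefficients of $x_i$ as a vector state of $x_i\otimes(x_i^*)^{op}$ on $H\otimes\overline H$. The weighting differs, and only yours works. The paper writes $v=\sum_g v_g\otimes\xi_g$ with $\{v_g\}$ orthonormal and applies an \emph{unweighted} Cauchy--Schwarz, asserting
\[
\Big(\sum_{i,g,h}|\langle U_iv_g,v_h\rangle|\,|\langle x_i\xi_g,\xi_h\rangle|\Big)^2\le n\sum_{i,g,h}|\langle x_i\xi_g,\xi_h\rangle|^2 .
\]
This would need $\sum_{g,h}|\langle U_iv_g,v_h\rangle|^2\le 1$. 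That sum is a Hilbert--Schmidt norm squared and can be as large as the number of $v_g$'s. Concretely, take $n=1$, $U_1=I_k$, $x_1=I_H$, $k\ge m\ge 2$, and $\xi_g=m^{-1/2}e_g$ with $e_1,\dots,e_m$ orthonormal. Then the left side is $1$ and the right side is $1/m$. Correspondingly, the paper's vector $u=\sum_g\xi_g\otimes\overline{\xi_g}$ has norm only $m^{-1/2}$ here. So the paper's chain fails at that step, although the conclusion is true.

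\textbf{Why your split works.} Your symmetric split $\rho^{1/4}(\cdot)\rho^{1/4}$ is exactly what is needed. In a Schmidt decomposition $h=\sum_g s_g v_g\otimes e_g$, it is Cauchy--Schwarz with weights $\sqrt{s_gs_h}$ on both factors:
\[
\Big|\sum_{g,h}s_gs_h\langle Uv_g,v_h\rangle\langle xe_g,e_h\rangle\Big|\le\Big(\sum_{g,h}s_gs_h|\langle Uv_g,v_h\rangle|^2\Big)^{1/2}\Big(\sum_{g,h}s_gs_h|\langle xe_g,e_h\rangle|^2\Big)^{1/2}.
\]
\begin{itemize}
\item The first factor is $\operatorname{tr}(DU^*DU)^{1/2}\le\|D\|_2\le 1$, where $D$ is diagonal with entries $s_g$ in the basis $\{v_g\}$.
\item The second factor is $\langle(x\otimes\overline{x})\xi,\xi\rangle^{1/2}$ for the \emph{unit} vector $\xi=\sum_g s_g e_g\otimes\overline{e_g}$.
\end{itemize}

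\textbf{One correction to your bookkeeping.} You planned to check the vector-state identity for $a_i$ on rank-one $\rho$ and extend by linearity. But $a_i$ is quadratic in $\rho^{1/2}$, so that does not literally work. Either verify the polarized (sesquilinear) identity on rank-one operators, or compute directly in an eigenbasis of $\rho$: there $a_i=\sum_{g,h}s_gs_h|\langle x_ie_g,e_h\rangle|^2=\langle(x_i\otimes\overline{x_i})\xi,\xi\rangle$, as above.
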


\begin{proof}
It follows from Definition \ref{D:free joint numerical radius}, by putting $U_i=(x_i)^{op}$, $i=1,\dots,n$, that we have
$$w\left(\sum_{i=1}^n x_i \otimes (x_i^*)^{op}\right)\leq w_{cb}(x_1,\ldots,x_n).$$
Now take contractions $U_1,\dots, U_n \in M_k$ and a unit vector $v=\sum_g v_g \otimes \xi_g \in \mathbb{C}^k \otimes H$ with $\{v_g\}$ being orthonormal in  $\mathbb{C}^k$ (so that $\sum_g \|\xi_g\|^2=1$), we have
\begin{align*}
 \big| \big\langle \big( \sum_i U_i \otimes x_i \big) v , v \big\rangle \big|^2
 &\le \left(\sum_{i,g,h} |\langle U_i v_g , v_h \rangle | |\langle x_i\xi_g , \xi_h \rangle|\right)^2 \\
  &\le n\sum_{i,g,h} |\langle x_i\xi_g , \xi_h \rangle|^2 \ \ \ \ (\text{by Holder's inequality})\\ 
  &= n\sum_{i,g,h} \langle x_i\xi_g , \xi_h \rangle \overline{\langle x_i\xi_g , \xi_h \rangle}\\
 & = n\sum_{i,g,h} \langle x_i\xi_g , \xi_h \rangle \langle (x_i^*)^{op}\xi_g , \xi_h \rangle \ \ \ \ (\text{by} \ \eqref{Eq:Concrete Rep of opposite algebra})\\
 & = n \sum_{i,g,h}  \langle (x_i \otimes (x_i^*)^{op})(\xi_g\otimes \xi_g) , \xi_h \otimes \xi_h \rangle \\
 &= n\langle (\sum_i x_i \otimes (x_i^*)^{op})(u) , u \rangle \\
\end{align*}
where $u = \sum_g \xi_g\otimes \xi_g \in H\otimes \overline{H}$. Since
$$\|u\|_{H\otimes \bar{H}}\leq \sum_g \|\xi_g\|^2=1,$$
we have
 \[
w_{cb}(x_1,\ldots,x_n) \leq \sqrt{n} w\left(\sum_{i=1}^n x_i \otimes (x_i^*)^{op}\right)^{1/2}.
\]
\end{proof}

\begin{rem}
Let $(x_1,\ldots,x_n)$ be an $n$-tuple in a unital C$^*$-algebra $\cl A$, where each $x_i$ is either an isometry or coisometry. Motivated by the relation \eqref{Eq:FJNR vs tensor with op} and Theorem \ref{T:amne trace vs FJNR-all equivalent conditions}, one might wonder whether we always have
\begin{align}\label{Eq:FJNR equal tensor with op}
w_{cb}(x_1,\ldots,x_n) = w\left(\sum_{i=1}^n x_i \otimes (x_i^*)^{op}\right),    
\end{align}
where the latter is taken in $\cl A \otimes_{min} \cl A^{op}$? This holds when $\cl A$ has an amenable trace (Theorem \ref{T:amne trace vs FJNR-all equivalent conditions}). It also holds when $x_i=\lambda(g_i)$, $i=1,\ldots,n$, where $\{g_i:i=1,\ldots,n\}$ is a subset of a discrete group $G$. Indeed,  
by Fell's absorption principle, the representation $g\rightarrow \lambda(g)\otimes (\lambda(g)^*)^{op}$ is unitarily equivalent to $g\rightarrow \lambda(g)\otimes 1_G$, where ${1_G}$ is the trivial representation. Then, since the joint numerical radius is a unitary invariant,
\[w\left(\sum_{i=1}^n  \lambda(g)\otimes (\lambda(g)^*)^{op}\right)=w\left(\sum_{i=1}^n  \lambda(g_i)\otimes 1\right)=w\left(\sum_{i=1}^n  \lambda(g_i)\right)=w_{cb}(\lambda(g_1), \ldots, \lambda(g_n)).\]
The last equality follows from \cite[Proposition 2.2]{DPR}. This shows that \eqref{Eq:FJNR equal tensor with op} holds when we take the left regular representation. However, we will show in Proposition \ref{P:failure FJNR equal tensor with op} that the equality \eqref{Eq:FJNR equal tensor with op} fails for a general representation.
\end{rem}

\begin{prop}\label{P:partial isom-compare JFNR with NR on min tensor}
For every $n \in \bb N$, let $(x_1,\ldots,x_n)$ be $n$-tuple of operators such that each $x_i\in B(H)$ is a partial isometry. Then 
\begin{align}
w_{cb}(x_1,1-x_1x_1^*,x_1^*,1-x_1^*x_1,\ldots,x_n, 1-x_nx_n^*,x_n^*,1-x^*_nx_n) \leq 2n.
\end{align}
\end{prop}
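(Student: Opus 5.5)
The plan is to reduce the statement to a bound for a single block of four operators, and then get that bound from an orthogonal decomposition of the vector followed by Cauchy--Schwarz. Unitaries suffice in the supremum defining $w_{cb}$, as noted after Definition~\ref{D:free joint numerical radius}.

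\emph{Reduction to one block.} The numerical radius is subadditive. Hence if an $m$-tuple is split into sub-tuples of consecutive entries, $w_{cb}$ of the whole tuple is at most the sum of $w_{cb}$ of the pieces. To see this, fix unitaries $U_j\in M_k$ and a unit vector $h$, split $\sum_j U_j\otimes a_j$ into the corresponding partial sums, and bound each partial sum by the $w_{cb}$ of its sub-tuple. It therefore suffices to show, for a single partial isometry $x\in B(H)$,
\[
w_{cb}(x,\,p,\,x^*,\,q)\le 2,\qquad p:=1-xx^*,\quad q:=1-x^*x .
\]

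\emph{The one-block estimate.} Fix unitaries $U_1,\dots,U_4\in M_k$ and a unit vector $h\in\bb C^k\otimes H$. Since $x$ is a partial isometry, $x^*x$ and $q$ are complementary projections, and so are $xx^*$ and $p$. Set
\[
a=(I\otimes x^*x)h,\quad b=(I\otimes q)h,\quad c=(I\otimes xx^*)h,\quad d=(I\otimes p)h .
\]
Then $\|a\|^2+\|b\|^2=1=\|c\|^2+\|d\|^2$. Use $x=xx^*\,x\,x^*x$, together with $q=q^2$, $p=p^2$ and $x^*=x^*x\,x^*\,xx^*$, to compress each term. This gives
\[
\langle (U_1\otimes x)h,h\rangle=\langle (U_1\otimes x)a,c\rangle,\qquad \langle (U_3\otimes x^*)h,h\rangle=\langle (U_3\otimes x^*)c,a\rangle,
\]
\[
\langle (U_4\otimes q)h,h\rangle=\langle (U_4\otimes q)b,b\rangle,\qquad \langle (U_2\otimes p)h,h\rangle=\langle (U_2\otimes p)d,d\rangle .
\]
All the operators involved are contractions, so Cauchy--Schwarz bounds the modulus of the full sum by
\[
2\|a\|\|c\|+\|b\|^2+\|d\|^2\le \|a\|^2+\|c\|^2+\|b\|^2+\|d\|^2=2 .
\]
Taking the supremum over $k$, the $U_j$ and $h$ yields the block bound, and summing over the $n$ blocks gives $2n$.

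\emph{Main point to check.} The step that needs care is choosing the correct pairing of the four terms. A crude norm bound, for instance viewing $U_1\otimes x+U_4\otimes q$ as a row operator, only gives $\sqrt2$ per pair and hence $2\sqrt2$ per block. The bound $2$ comes from the observation that $x$ maps the range of $x^*x$ into the range of $xx^*$, while $q$ and $p$ act only on the complementary ranges. Placing $x$ and $x^*$ across the two decompositions and applying the AM--GM step $2\|a\|\|c\|\le\|a\|^2+\|c\|^2$ exactly absorbs the cross terms.
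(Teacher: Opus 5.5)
Your proof is correct, but it is organized differently from the paper's. The paper does not reduce to single blocks. It applies one state-level Cauchy--Schwarz estimate, $\bigl|\psi\bigl(\sum_{j=1}^{m} y_j\bigr)\bigr|\le\sqrt{m}\,\psi\bigl(\sum_{j=1}^{m} y_j^*y_j\bigr)^{1/2}$, to the $2n$ grouped operators $y_i=x_i\otimes U_{i1}+(1-x_ix_i^*)\otimes U_{i2}$ and $y_{i+n}=x_i^*\otimes U_{i3}+(1-x_i^*x_i)\otimes U_{i4}$. The relations $x_i=x_ix_i^*x_i$ kill the cross terms, so $\sum_j y_j^*y_j=\sum_i(x_i^*x_i+1-x_ix_i^*+x_ix_i^*+1-x_i^*x_i)\otimes I_k=2n$, and the bound $\sqrt{2n}\cdot\sqrt{2n}=2n$ follows. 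You instead use subadditivity of $w$ to isolate one block, decompose $h$ along the complementary projections, pair $x$ against $x^*$, and finish with $2\|a\|\|c\|\le\|a\|^2+\|c\|^2$.

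Your route makes the geometry behind the constant $2$ per block very visible. The paper's is purely algebraic and phrased for arbitrary states. It is also set up so that the same inequality \eqref{Eq:CS-inequality}, with $y_j$ built from $x_i\otimes(x_i^*)^{op}$, can be reused in the converse half of Theorem~\ref{T:amenatble trace-C alg generated by partical isometries}. There the equality case of that estimate is the starting point for producing the amenable trace.

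One small correction to your closing remark: the pairing itself is not the crux. The paper groups $x$ with $1-xx^*$ and $x^*$ with $1-x^*x$, and it still gets $2$ per block. In your notation, $\psi(y_i^*y_i)+\psi(y_{i+n}^*y_{i+n})=\|a\|^2+\|d\|^2+\|b\|^2+\|c\|^2=2$ for the vector state of $h$. The $\sqrt2$ loss you describe comes from replacing each pair by its operator norm before summing, not from how the terms are paired.
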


\begin{proof}
     We first note that by the Cauchy-Schwartz inequality, for every $m\in \mathbb{N}$ and $y_1,\ldots,y_m \in B(H)$ and any state $\psi$ on $B(H)$, we have
  \begin{align}\label{Eq:CS-inequality}
    \left|\psi\left(\sum_{i=1}^{m} y_i \right)\right|\leq \sqrt{m}\, \psi\left(\sum_{i=1}^{m} y_i^*y_i \right)^{1/2}.
  \end{align}
Now let $\{ U_{i1},U_{i2},U_{i3},U_{i4} \}$, $i=1,\ldots,n$ be unitary matrices in $M_k$. Then, by letting
$$y_i=x_i\otimes U_{i1}+(1-x_ix^*_i)\otimes U_{i2} \ \ , \ \ y_{i+n}=x_i^*\otimes U_{i3}+(1-x_i^*x_i)\otimes U_{i4},$$
and using the fact that $x_i=x_ix_i^*x_i$ and $x_i^*=x_i^*x_ix_i^*$, we have
$$y_i^*y_i=(x_i^*x_i+1-x_ix^*_i)\otimes I_k \ \ , \ \ y_{i+n}^*y_{i+n}=(x_ix_i^*+1-x_i^*x_i)\otimes I_k.$$
Hence, by \eqref{Eq:CS-inequality},
\begin{align*}
    & \left|\psi\left(\sum_{i=1}^{n} x_i\otimes U_{i1}+(1-x_ix^*_i)\otimes U_{i2}+x_i^*\otimes U_{i3}+(1-x_i^*x_i)\otimes U_{i4} \right)\right| \\
   & \leq \sqrt{2n} \psi\left(\sum_{i=1}^{n} (x_i^*x_i+1-x_ix^*_i)\otimes I_k+(x_ix_i^*+1-x_i^*x_i)\otimes I_k \right)^{1/2}\\
   & = \sqrt{2n} \psi\left(\sum_{i=1}^{n} (x_i^*x_i+1-x_ix^*_i+x_ix_i^*+1-x_i^*x_i)\otimes I_k \right)^{1/2} \\
    & = \sqrt{2n} \left(\sum_{i=1}^{n} 2 \right)^{1/2} \\
    &=2n.
\end{align*}
\end{proof}

We can now present the main result of this section that characterizes when C*-algebras generated by partial isometries have amenable traces in terms of a numerical radius.  

\begin{thm}\label{T:amenatble trace-C alg generated by partical isometries}
For every $n \in \bb N$, let $(x_1,\ldots,x_n)$ be an $n$-tuple of operators such that each $x_i\in B(H)$ is a partial isometry. Then 
the unital C*-algebra generated by the set $\{x_1, \ldots, x_n\}$ has an amenable trace if and only if 
\begin{align}\label{Eq:NR op alg-partial isometry}
w\left( \sum_{i=1}^n x_i\otimes (x_i^*)^{op} +(1-x_ix^*_i)\otimes (1-x_ix^*_i)^{op}+x_i^* \otimes (x_i)^{op}+(1-x_i^*x_i)\otimes (1-x_i^*x_i)^{op} \right)=2n.
\end{align}
\end{thm}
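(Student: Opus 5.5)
Set $p_i = 1-x_ix_i^*$ and $q_i = 1-x_i^*x_i$. These are projections, and the $4n$-tuple
\[\mathbf y=(x_1,p_1,x_1^*,q_1,\ldots,x_n,p_n,x_n^*,q_n)\]
generates the same unital C*-algebra $\cl A$ as $\{x_1,\ldots,x_n\}$. Write $T$ for the element of $\cl A\otimes_{min}\cl A^{op}$ appearing in \eqref{Eq:NR op alg-partial isometry}. The proof compares $T$, $w_{cb}(\mathbf y)$ and the trace in both directions.

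\textbf{Amenable trace $\Rightarrow w(T)=2n$.} Suppose $\cl A$ has an amenable trace $\tau$. By Theorem~\ref{T:Charcrization amenable trace-Kirchberg}, the functional $\phi(a\otimes b^{op})=\tau(ab)$ extends to a state on $\cl A\otimes_{min}\cl A^{op}$. Evaluating on $T$ and using $p_i^2=p_i$, $q_i^2=q_i$ gives
\[\phi(T)=\sum_i \big[\tau(x_ix_i^*)+\tau(1-x_ix_i^*)+\tau(x_i^*x_i)+\tau(1-x_i^*x_i)\big]=\sum_i (1+1)=2n.\]
So $w(T)\ge 2n$. For the reverse inequality, the first inequality of Proposition~\ref{P:FJNR vs tensor with op} (applied to the $4n$ contractions in $\mathbf y$; note that $p_i^*=p_i$ and $q_i^*=q_i$) gives $w(T)\le w_{cb}(\mathbf y)$. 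Proposition~\ref{P:partial isom-compare JFNR with NR on min tensor} gives $w_{cb}(\mathbf y)\le 2n$. Hence $w(T)=2n$.

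\textbf{$w(T)=2n\Rightarrow$ amenable trace.} Suppose $w(T)=2n$. Again $w(T)\le w_{cb}(\mathbf y)\le 2n$, so $w_{cb}(\mathbf y)=2n$. This is the main obstacle: Proposition~\ref{P:amenable trace gen by unitaries} requires $w_{cb}$ to equal the number of operators, which is $4n$ here, not $2n$. So I will redo that argument using the Cauchy–Schwarz structure of Proposition~\ref{P:partial isom-compare JFNR with NR on min tensor}.

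Take unitaries $U_{ij}\in M_k$, a unit vector $h$ and the vector state $\psi=\omega_h$ nearly attaining $2n$. Form $y_i$ and $y_{i+n}$ as in the proof of that proposition, so that
\[\sum_{j=1}^{2n}\psi(y_j^*y_j)=2n,\qquad \Big|\psi\Big(\sum_{j=1}^{2n} y_j\Big)\Big|>2n-\epsilon.\]
Near-equality in Cauchy–Schwarz then forces $\|y_jh - h\|$ to be small for every $j$, after rotating phases.

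\textbf{Extracting approximate commutation.} Write $y_i=(x_i\otimes U_{i1}+p_i\otimes U_{i2})$. Since $x_ix_i^*$ and $p_i$ are complementary projections and $x_i=x_ix_i^*x_i$, the operator $y_i$ is a partial-isometry-type dilation. From $y_ih\approx h$ I expect to derive, by multiplying on the left by $x_ix_i^*\otimes 1$ and by $p_i\otimes 1$, that
\[(x_i\otimes 1)h\approx (x_ix_i^*\otimes U_{i1}^*)h,\qquad (p_i\otimes 1)h\approx(p_i\otimes U_{i2}^*)h.\]
Similar relations should come from $y_{i+n}h\approx h$ and from adjoints. The goal is to get, for each generator $a\in\{x_i,x_i^*\}$, an approximation of the form $(a\otimes1)h\approx (1\otimes W)h$ with $W$ unitary, or a combination in which the matrix part commutes with $B(H)\otimes 1$. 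With that, the estimate in the proof of Proposition~\ref{P:amenable trace gen by unitaries} yields $|s_h(ax)-s_h(xa)|\le C\sqrt{\epsilon}\,\|x\|$. A natural choice is $W_i = U_{i1}^*\oplus U_{i2}^*$-type combinations, using $y_i$ as a unitary-like operator on the range decomposition.

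Finally, take a weak* limit point of the states $s_h$ as $\epsilon\to0$. This gives an $\cl A$-hypertrace, since approximate commutation with the generators $x_i,x_i^*$ passes to the C*-algebra they generate. The delicate point is making the algebra in the commutation step work when $x_i$ is not invertible. If the direct computation stalls, I would instead dilate each partial isometry $x_i$ to the unitary
\[\begin{pmatrix}x_i&p_i\\ q_i&x_i^*\end{pmatrix}\in M_2(\cl A),\]
which is where the four terms in $T$ come from, and then reduce to the unitary case of Theorem~\ref{T:amne trace vs FJNR-all equivalent conditions} applied to $M_2(\cl A)$. An amenable trace on $M_2(\cl A)$ restricts, up to normalization, to one on $\cl A$.
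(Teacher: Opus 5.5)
Your forward direction is correct and is the paper's argument: Kirchberg's state gives $\phi(T)=2n$, and Propositions~\ref{P:FJNR vs tensor with op} and \ref{P:partial isom-compare JFNR with NR on min tensor} give $w(T)\le w_{cb}(\mathbf y)\le 2n$.

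The converse has a genuine gap, and your main route cannot be completed. From its first step it uses only near-attainment of $w_{cb}(\mathbf y)=2n$, and that condition does not imply an amenable trace. If each $x_i=P_i$ is a projection, then $\mathbf y=(P_1,1-P_1,P_1,1-P_1,\ldots)$. Choosing $k=1$ and all $U_{ij}=1$ gives $\sum_j y_j=2n\cdot 1$. So $w_{cb}(\mathbf y)=2n$ for \emph{every} tuple of projections: every unit vector $h$ is an exact extremizer, and $s_h$ is an arbitrary vector state. Now take $p_j=(1+\lambda(g_j))/2$, $j=1,2,3$, where $g_j$ are the generators of the non-amenable group $G=\bb Z_2*\bb Z_2*\bb Z_2$. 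These projections generate $C^*_\lambda(G)$, which has no amenable trace, since a hypertrace would restrict to an invariant mean on $\ell^\infty(G)$. The failure is already visible in your computation. The relations you expect, such as $(x_i\otimes 1)h\approx(x_ix_i^*\otimes U_{i1}^*)h$, keep the projection $x_ix_i^*$ on the $H$-side. The estimate in Proposition~\ref{P:amenable trace gen by unitaries}, however, needs $(a\otimes 1)h\approx(1\otimes W)h$ with $W$ acting on $\bb C^k$ alone. (This example also answers negatively the question in the paper's remark after the corollary on projections.)

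The missing idea is to keep the structure of $T$: its right-hand factors are the op-copies of the same $x_i,p_i,x_i^*,q_i$, not free unitaries. So you must work with a state $\phi$ on $\cl A\otimes_{min}\cl A^{op}$ with $|\phi(T)|=2n$. The paper does this directly, writing $P_i=x_i^*x_i$ and $Q_i=x_ix_i^*$. Equality in \eqref{Eq:CS-inequality} forces $\phi(P_i\otimes P_i^{op}+(1-P_i)\otimes(1-P_i)^{op})=1$, and likewise for $Q_i$; this puts $P_i\otimes 1-1\otimes P_i^{op}$ in the left kernel of $\phi$. The positive elements $z_i^*z_i$, with $z_i=x_i^*\otimes x_i^{op}-P_i\otimes P_i^{op}$, then put $x_i\otimes 1-1\otimes x_i^{op}$ there too. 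Hence $\phi(ab\otimes c^{op})=\phi(a\otimes(bc)^{op})$, and $\tau=\phi(\,\cdot\,\otimes 1)$ is an amenable trace by Theorem~\ref{T:Charcrization amenable trace-Kirchberg}.

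Your dilation fallback can also be made rigorous, but only starting from this $\phi$, and it needs steps you did not supply.
\begin{enumerate}
\item Let $\omega$ be the state on $M_2\otimes M_2^{op}$ given by $\omega(A\otimes B^{op})=\frac12\mathrm{Tr}(AB)$ (the maximally entangled vector state). Under $M_2(\cl A)\otimes_{min}M_2(\cl A)^{op}\cong(M_2\otimes M_2^{op})\otimes_{min}(\cl A\otimes_{min}\cl A^{op})$ one computes $(\omega\otimes\phi)(W_i\otimes(W_i^*)^{op})=\frac12\phi(T_i)$, where $T_i=x_i\otimes(x_i^*)^{op}+p_i\otimes p_i^{op}+x_i^*\otimes x_i^{op}+q_i\otimes q_i^{op}$ and $T=\sum_iT_i$.
\item These $n$ numbers lie in the unit disc and sum to a number of modulus $n$, so they all equal one unimodular $\zeta$.
\item Since $W_1,\ldots,W_n$ do not generate $M_2(\cl A)$, adjoin $F=\left(\begin{smallmatrix}0&1\\1&0\end{smallmatrix}\right)$ and $D=\left(\begin{smallmatrix}1&0\\0&-1\end{smallmatrix}\right)$. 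For these, $(\omega\otimes\phi)(F\otimes F^{op})=(\omega\otimes\phi)(D\otimes D^{op})=1$.
\item Using the contractions $\bar\zeta(W_i^*)^{op}$, $F^{op}$, $D^{op}$ gives $w_{cb}(W_1,\ldots,W_n,F,D)=n+2$. Proposition~\ref{P:amenable trace gen by unitaries} then yields an amenable trace on $C^*(W_1,\ldots,W_n,F,D)=M_2(\cl A)$, and compressing to the $(1,1)$-corner gives one on $\cl A$.
\end{enumerate}
This route absorbs the phase automatically. The paper's step from $|2n-\phi(\sum_iz_i^*z_i)|=2n$ to $\phi(\sum_iz_i^*z_i)=0$ tacitly assumes $\phi(T)=+2n$. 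Since $T=T^*$, the value $-2n$ can occur, e.g.\ $\cl A=\bb C^2$, $x_1=(1,-1)$, $\phi(a\otimes b^{op})=a_1b_2$.
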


\begin{proof}
Let $\cl A$ to be the unital C*-algebra generated by $\{x_i\}$. 
Suppose that $A$ has an amenable trace and $\tau$ is an amenable on $\cl A$. It follows from Propositions \ref{P:FJNR vs tensor with op} and \ref{P:partial isom-compare JFNR with NR on min tensor} that the quantity in the left hand-side of \eqref{Eq:NR op alg-partial isometry} is less than or equal to $2n$. On the other hand, by Theorem \ref{T:Charcrization amenable trace-Kirchberg}, there is a state
$\phi: \cl A \otimes_{min} \cl A^{op} \to \bb C$ given by 
$$\phi(a \otimes b^{op}) = \tau(ab)\ \ \ \ (a,b\in \cl A).$$ 
In particular,
\begin{align*}
&\phi\left( \sum_{i=1}^n x_i\otimes (x_i^*)^{op} +(1-x_ix^*_i)\otimes (1-x_ix^*_i)^{op}+x_i^* \otimes (x_i)^{op}+(1-x_i^*x_i)\otimes (1-x_i^*x_i)^{op} \right)= \\ 
& \tau \left( \sum_{i=1}^n x_i x_i^*+1-x_ix^*_i+x_i^* x_i+1-x_i^*x_i\right)=2n.
\end{align*}
 
Conversely, suppose that \eqref{Eq:NR op alg-partial isometry} holds. Then there is a state $\phi$ on $\cl A\otimes_{min} \cl A^{op}$ such that 
 \begin{align*}
\left|\phi\left( \sum_{i=1}^n x_i\otimes (x_i^*)^{op} +(1-x_ix^*_i)\otimes (1-x_ix^*_i)^{op}+x_i^* \otimes (x_i)^{op}+(1-x_i^*x_i)\otimes (1-x_i^*x_i)^{op} \right)\right|=2n 
\end{align*}
Hence, if we put
\begin{align*}
    P_i=x^*_ix_i \ \ , \ \ Q_i=x_ix_i^*,
\end{align*}
then, by \eqref{Eq:CS-inequality}, with 
$$y_i=x_i\otimes (x_i^*)^{op} +(1-x_ix^*_i)\otimes (1-x_ix^*_i)^{op}, \ \ i=1,\ldots,n $$ and 
$$y_{i+n}=x_i^* \otimes (x_i)^{op}+(1-x_i^*x_i)\otimes (1-x_i^*x_i)^{op}, \ \ i=1,\ldots,n $$ we obtain
 \begin{align*}
2n& = \left|\phi\left( \sum_{i=1}^n x_i\otimes (x_i^*)^{op} +(1-x_ix^*_i)\otimes (1-x_ix^*_i)^{op}+x_i^* \otimes (x_i)^{op}+(1-x_i^*x_i)\otimes (1-x_i^*x_i)^{op} \right)\right|\\
&\leq \sqrt{2n}\,\phi\left( \sum_{i=1}^n P_i\otimes P_i^{op} +(1-P_i)\otimes (1-P_i)^{op}+Q_i \otimes Q_i^{op}+(1-Q_i)\otimes (1-Q_i)^{op} \right)^{1/2}\\
&\leq 2n, 
\end{align*}
where the last equality follows since each $P_i\otimes P_i^{op} +(1-P_i)\otimes (1-P_i)^{op}$ and $Q_i \otimes Q_i^{op}+(1-Q_i)\otimes (1-Q_i)^{op}$ is a projection. Thus, for $i=1,\ldots,n$, we must have
\begin{align}\label{Eq:}
    \phi\left(P_i\otimes P_i^{op} +(1-P_i)\otimes (1-P_i)^{op}\right)=\phi\left(Q_i \otimes Q_i^{op}+(1-Q_i)\otimes (1-Q_i)^{op}\right)=2
\end{align}
so that
\begin{align}\label{Eq:state vanishes on projections-II}
    \phi\left(P_i\otimes 1+1\otimes P_i^{op}-2P_i\otimes P_i^{op}\right)= \phi\left(Q_i\otimes 1+1\otimes Q_i^{op}-2Q_i\otimes Q_i^{op}\right)=0.
\end{align}
It is a well-known fact, due to $\phi$ being an state, that the set $\{a\in \cl A\otimes_{min} \cl A^{op} : \phi(a^*a)=0 \}$ is a left ideal of $\cl A\otimes_{min} \cl A^{op}$ and is contained in the kernel of $\phi$. We will use this fact frequently throughout our argument. For each $i=1,\ldots,n$, 
we have
$$(P_i\otimes 1-1\otimes P_i^{op})^*(P_i\otimes 1-1\otimes P_i^{op})=P_i\otimes 1+1\otimes P_i^{op}-2P_i\otimes P_i^{op}.$$
Hence, by \eqref{Eq:state vanishes on projections-II}, $\phi(P_i\otimes 1-1\otimes P_i^{op})=0$, so that by applying once again \eqref{Eq:state vanishes on projections-II}, we get
$$\phi(P_i\otimes P_i^{op})=2\phi(P_i\otimes 1)=2\phi(1\otimes P_i^{op}).$$
Similarly, 
$$\phi(Q_i\otimes Q_i^{op})=2\phi(Q_i\otimes 1)=2\phi(1\otimes Q_i^{op}).$$
By combining the above two relations with how we choose $\phi$, we get
 \begin{align}\label{Eq:state vanishes on projections-III}
2n=\left|2n-\phi\left( \sum_{i=1}^n Q_i\otimes Q_i^{op}-x_i\otimes (x_i^*)^{op} -x_i^* \otimes (x_i)^{op}+P_i\otimes P_i^{op} \right)\right|.
\end{align}
However, each $Q_i\otimes Q_i^*-x_i\otimes (x_i^*)^{op} -x_i^* \otimes (x_i)^{op}+P_i\otimes P_i^{op}$ is a positive operator as
$$(x_i^*\otimes x_i^{op} -P_i\otimes P_i^{op})^*(x_i^* \otimes x_i^{op}-P_i\otimes P_i^{op})=Q_i\otimes Q_i^*-x_i\otimes (x_i^*)^{op} -x_i^* \otimes (x_i)^{op}+P_i\otimes P_i^{op}.$$
Therefore, by \eqref{Eq:state vanishes on projections-III}, for each $i=1,\ldots,n$,
$$\phi\left( Q_i\otimes Q_i^{op}-x_i\otimes (x_i^*)^{op} -x_i^* \otimes (x_i)^{op}+P_i\otimes P_i^{op} \right)=0.$$
This implies that $\phi\left(x_i^* \otimes (x_i)^{op}-P_i\otimes P_i^{op} \right)=0$ so that
\begin{align}\label{Eq:state vanishes on projections-IV}
\phi\left(x_i^* \otimes (x_i)^{op})=\phi(P_i\otimes P_i^{op} \right)=\phi\left(P_i\otimes 1\right)=\phi\left(1\otimes P_i^{op} \right).
\end{align}
Similarly, we have
\begin{align}\label{Eq:state vanishes on projections-V}
\phi\left(x_i \otimes (x_i^*)^{op})=\phi(Q_i\otimes Q_i^{op} \right)=\phi\left(Q_i\otimes 1\right)=\phi\left(1\otimes Q_i^{op} \right).
\end{align}
Finally, from the relation
$$(x_i\otimes 1 -1\otimes x_i^{op})^*(x_i \otimes 1-1\otimes x_i^{op})=P_i\otimes 1-x_i^*\otimes (x_i)^{op} -x_i \otimes (x_i^*)^{op}+1\otimes Q_i^{op},$$
and equations \eqref{Eq:state vanishes on projections-IV} and \eqref{Eq:state vanishes on projections-V}, it follows that the left ideal in $\cl A\otimes_{min} \cl A^{op}$ generated by $x_i\otimes 1-1\otimes (x_i)^{op}$ lies in $\ker \phi$. In particular,
\begin{align*}
    \phi(ax_i\otimes c^{op})=\phi(a\otimes (x_ic)^{op}) \ \ \ (a,c \in \cl A, i=1,\ldots,n).
\end{align*}
Similarly, we can show that 
\begin{align*}
    \phi(ax^*_i\otimes c^{op})=\phi(a\otimes (x^*_ic)^{op}) \ \ \ (a,c \in \cl A, i=1,\ldots,n).
\end{align*}
Since $\cl A$ is generated by $\{x_i\}$ and $\{x_i^*\}$, it is straightforward to verify that the preceding two relations will imply 
\begin{align*}
    \phi(ab\otimes c^{op})=\phi(a\otimes (bc)^{op}) \ \ \ (a,b,c \in \cl A).
\end{align*}
Putting $c=1$ and defining the stat $\tau$ on $\cl A$ by 
$$\tau(a)=\phi(a\otimes 1) \ \ \ (a\in \cl A),$$
we will obtain
$$\tau(ab)=\phi(ab\otimes 1)=\phi(a\otimes b^{op}) \ \ (a,b\in \cl A).$$
It is automatic that any such state $\tau$ that satisfies the above relation must be a trace. This follows since, for every self-adjoint elements $a,b\in \cl A$, $\tau(ab)=\phi(a\otimes b^{op})\in \mathbb{R}$. Hence
$$\tau(ab)=\overline{\tau(ab)}=\tau((ab)^*)=\tau(ba).$$
The final claim follows from the density of self-adjoint elements in $\cl A$. 

  \end{proof}

We have the following corollary that characterizes, in terms of a numerical radius, when a unital C*-algebra generated by projections has an amenable trace. 

\begin{cor}
For every $n \in \bb N$, let $(P_1,\ldots,P_n)$ be an $n$-tuple of projections in $B(H)$. Then 
the unital C*-algebra generated by $\{P_i\}$ has an amenable trace if and only if
$$w\left( \sum_{i=1}^n P_i\otimes P_i^{op} +(1-P_i)\otimes (1-P_i)^{op} \right)=n.$$

\end{cor}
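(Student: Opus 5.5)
The plan is to specialize Theorem \ref{T:amenatble trace-C alg generated by partical isometries} to the case $x_i = P_i$. A projection is a partial isometry with $x_i^* = x_i = P_i$ and $x_ix_i^* = x_i^*x_i = P_i$. Substituting, the operator in \eqref{Eq:NR op alg-partial isometry} becomes
\[
\sum_{i=1}^n P_i\otimes P_i^{op} + (1-P_i)\otimes(1-P_i)^{op} + P_i\otimes P_i^{op} + (1-P_i)\otimes (1-P_i)^{op} = 2\sum_{i=1}^n \big(P_i\otimes P_i^{op} + (1-P_i)\otimes (1-P_i)^{op}\big).
\]
The numerical radius is homogeneous, so this operator has numerical radius $2n$ if and only if
\[
w\Big(\sum_{i=1}^n P_i\otimes P_i^{op}+(1-P_i)\otimes(1-P_i)^{op}\Big)=n .
\]
The unital C*-algebra generated by $\{P_i\}$ coincides with the one generated by the partial isometries $x_i=P_i$. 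The equivalence then follows directly from Theorem \ref{T:amenatble trace-C alg generated by partial isometries}.

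The argument should be made airtight on one point: that theorem is stated for the element of $\cl A\otimes_{min}\cl A^{op}$, with $\cl A$ the generated algebra. The corollary's numerical radius must be read in the same ambient algebra, which is the natural reading. It is also worth noting, as a sanity check, that each summand $P_i\otimes P_i^{op}+(1-P_i)\otimes(1-P_i)^{op}$ is a projection, being a sum of two orthogonal projections. Hence the sum is positive with norm at most $n$, so the value $n$ is exactly the maximal possible value, consistent with the theorem.

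For a self-contained argument, I would instead use Theorem \ref{T:Charcrization amenable trace-Kirchberg} directly. In the forward direction, the state $\phi(a\otimes b^{op})=\tau(ab)$ evaluates the sum to $\sum_i \tau(P_i)+\tau(1-P_i)=n$. For the converse, a state attaining $n$ must attain $1$ on each summand projection. Exactly as in the proof of Theorem \ref{T:amenatble trace-C alg generated by partical isometries}, this gives $\phi((P_i\otimes 1 - 1\otimes P_i^{op})^*(P_i\otimes 1-1\otimes P_i^{op}))=0$. Then $\phi(aP_i\otimes c^{op})=\phi(a\otimes (P_ic)^{op})$, and the trace $\tau(a)=\phi(a\otimes 1)$ is obtained as there. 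I expect no real obstacle; the only care needed is the bookkeeping of the factor $2$ in the reduction.
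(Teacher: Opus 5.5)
Your proposal is correct and matches the paper's proof, which just substitutes $x_i=P_i$ into the partial-isometry theorem; you also make explicit the factor $2$ that the paper leaves implicit, since the operator there becomes $2\sum_i \bigl(P_i\otimes P_i^{op}+(1-P_i)\otimes(1-P_i)^{op}\bigr)$. Your self-contained variant is also sound: because this operator is a sum of projections, hence positive, the state attaining $w$ takes the value $+n$, so each summand projection must have state value exactly $1$ without any phase ambiguity.
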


\begin{proof}
The result follows from Theorem \ref{T:amenatble trace-C alg generated by partical isometries} by putting $x_i=P_i$. 
\end{proof}

\begin{rem}
It would be interesting to see whether the conditions in Theorem~\ref{T:amenatble trace-C alg generated by partical isometries} would be also equivalent to
\begin{align*}
w_{cb}(x_1,1-x_1x_1^*,x_1^*,1-x_1^*x_1,\ldots,x_n, 1-x_nx_n^*,x_n^*,1-x^*_nx_n)=2n?
\end{align*}
Of course, having an amenable trace will imply this condition; this is straighforward as can be seen by  combining Propositions \ref{P:FJNR vs tensor with op}, \ref{P:partial isom-compare JFNR with NR on min tensor} and Theorem \ref{T:amenatble trace-C alg generated by partical isometries}. However,  we do not see how to obtain the converse. 
\end{rem}

\begin{rem} For graph C*-algebras it would be interesting to try and give formulas for the numerical radius appearing in 
Theorem~\ref{T:amenatble trace-C alg generated by partical isometries} for the generating partial isometries from properties of the defining graph.
\end{rem} 

\section{Amenable Traces and the local lifting property (LLP)}

In this section we look at the relation between various lifting results and the existence of amenable traces. We revisit some of the ideas in \cite{DPR}. First we need a slight reformulation of the result \ref{T:Charcrization amenable trace-Kirchberg}.

\begin{thm}\label{T:amen tarce-LLP for f.d. operator system}
    Let $\cl A$ be a separable C$^*$-algebra, and let $\tau$ be a tracial state on $\cl A$. Then $\tau$ is amenable if and only if 
        there is a $*$-homomorphsim 
        $\pi: \cl A\rightarrow \cl R^{\omega}= \ell^{\infty}(\cl R)/I$ with $tr_{\cl R^{\omega}}\circ\pi=\tau$ such that for every  
         $n$ and unitaries $u_1, \ldots, u_n\in \cl A$, the restriction of $\pi$ on the finite dimensional operator system $S=\text{span}\{1, u_1, u_1^*, \ldots, u_n, u_n^*\}$ is liftable into $\ell^{\infty}(\cl R)$.
        \end{thm}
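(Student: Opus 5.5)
We prove the two directions separately, using Theorem~\ref{T:Charcrization amenable trace-Kirchberg} in each.

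\emph{Forward direction.} If $\tau$ is amenable, condition (2) of Theorem~\ref{T:Charcrization amenable trace-Kirchberg} gives a $*$-homomorphism $\pi:\cl A\to\cl R^\omega$ with $tr_{\cl R^\omega}\circ\pi=\tau$. It also gives a completely positive contractive lift $\Phi:\cl A\to\ell^\infty(\cl R)$ of $\pi$. Restricting $\Phi$ to any finite-dimensional operator system $S=\text{span}\{1,u_1,u_1^*,\dots,u_n,u_n^*\}$ gives a CPC lift of $\pi|_S$.

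If one insists on a UCP lift, we correct for the unit. Write $\Phi(1)=(e_k)_k$, where $0\le e_k\le 1$ and $(e_k)$ maps to $1$ in $\cl R^\omega$. Pick a state $\sigma$ on $S$ (for instance $\tau|_S$) and set $\Phi'(x)=\Phi(x)+\sigma(x)(1-\Phi(1))$. Then $\Phi'$ is UCP, being a sum of two CP maps that agrees with $1$ at the unit. It differs from $\Phi$ by an element of $I$, so it still lifts $\pi|_S$. This direction is routine.

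\emph{Converse, step one: lift on each $S$.} Assume $\pi$ exists with $tr\circ\pi=\tau$ and with liftable restrictions to every such $S$. We aim for condition (3) of Theorem~\ref{T:Charcrization amenable trace-Kirchberg}, or equivalently the hypertrace definition.

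Fix finitely many unitaries $u_1,\dots,u_n\in\cl A$. Let $\psi=(\psi_k)_k:S\to\ell^\infty(\cl R)$ be a UCP lift of $\pi|_S$; each $\psi_k:S\to\cl R$ is then UCP. Since $\pi$ is multiplicative and $\pi(u_i)$ is unitary, $\lim_\omega\|\psi_k(u_i)^*\psi_k(u_i)-1\|_2=0$. Likewise $\lim_\omega\|\psi_k(u_i)\psi_k(u_i)^*-1\|_2=0$, and $\lim_\omega tr(\psi_k(u_i))=\tau(u_i)$.

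Because $\cl R$ is hyperfinite, we can compress to matrix subalgebras with trace-preserving conditional expectations. This gives UCP maps $\phi_k:S\to M_{d_k}$ with the same asymptotic properties in $\|\cdot\|_2$.

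\emph{Converse, step two: the hypertrace.} We mimic the proof of Proposition~\ref{P:amenable trace gen by unitaries}. Extend $\phi_k$ by Arveson's theorem to UCP maps $\tilde\phi_k:B(H)\to M_{d_k}$, and consider the states $\rho_k=tr_{d_k}\circ\tilde\phi_k$ on $B(H)$.

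The condition $\tilde\phi_k(u_i)^*\tilde\phi_k(u_i)\approx\tilde\phi_k(u_i^*u_i)$ in trace norm places $u_i$ approximately in the multiplicative domain. A Stinespring/Cauchy--Schwarz estimate then gives
\[|\rho_k(Tu_i)-\rho_k(u_iT)|\le C\,\|T\|\,\|\phi_k(u_i)^*\phi_k(u_i)-1\|_2^{1/2}\quad (T\in B(H)),\]
which tends to $0$ along $\omega$. In addition, $\rho_k(u_i)\to\tau(u_i)$.

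Now take a weak$^*$ cluster point of $\rho_k$ over the net of finite sets of unitaries, exactly as in the proof of Theorem~\ref{T:amne trace vs FJNR-all equivalent conditions}. This produces a state $\rho$ on $B(H)$ that commutes with every unitary of $\cl A$, hence with all of $\cl A$ since unitaries span $\cl A$. It also agrees with $\tau$ on unitaries, so $\rho|_{\cl A}=\tau$. Therefore $\tau$ is amenable.

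\emph{Main obstacle.} The hardest step is the approximate-multiplicative-domain estimate. Its input is only $2$-norm smallness of $\phi_k(u_i)^*\phi_k(u_i)-1$, together with the reverse product $\phi_k(u_i)\phi_k(u_i)^*-1$. Its output must be near-commutation of $\rho_k$ with arbitrary $T\in B(H)$, not merely with elements of $S$.

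To get it, I would write $\tilde\phi_k=V^*\sigma(\cdot)V$ via Stinespring. Then $\|(\sigma(u_i)V-V\phi_k(u_i))\xi\|^2$ is controlled by $tr\bigl(1-\phi_k(u_i)^*\phi_k(u_i)\bigr)$. Applying Cauchy--Schwarz with respect to the normalized trace should then give the displayed estimate, as in the classical proof of (3)$\Rightarrow$(1) in Brown's monograph.
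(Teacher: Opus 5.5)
Your proof is correct. The forward direction is the same as the paper's: restrict the CPC lift from Theorem~\ref{T:Charcrization amenable trace-Kirchberg}(2). Your unit correction is fine; note that it uses $\pi(1)=1$, which holds because $tr_{\cl R^\omega}(\pi(1))=\tau(1)=1$ and $tr_{\cl R^\omega}$ is faithful.

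The converse takes a genuinely different route. The paper globalizes the local lifts. It extends each one to a CPC map $\Psi_S:\cl A\to\ell^\infty(\cl R)$ by injectivity, takes a BW-limit point to get a single lift on all of $\cl A$, and then invokes (2)$\Rightarrow$(1) of Theorem~\ref{T:Charcrization amenable trace-Kirchberg}. You never build a global lift. Instead you turn each local lift directly into states $\rho_k=tr_{d_k}\circ\tilde\phi_k$ on $B(H)$ that almost commute with the given unitaries. You then take a weak$^*$ limit, first along $\omega$ and then over finite sets of unitaries; this is a localized version of the standard (3)$\Rightarrow$(1) argument. Your key estimate does hold. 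Set $D=(1-VV^*)\sigma(u)V$; then $D^*D=1-\phi_k(u)^*\phi_k(u)$. Cauchy--Schwarz for $tr_{d_k}$, together with traciality and the analogous term for $\phi_k(uT)$, gives $|\rho_k(Tu)-\rho_k(uT)|\le 2\|T\|\,tr_{d_k}\bigl(1-\phi_k(u)^*\phi_k(u)\bigr)^{1/2}$.

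What your route buys is robustness at the limiting step. The conditions $\rho(Tu)=\rho(uT)$ and $\rho(u)=\tau(u)$ are weak$^*$-closed, so passing to the limit is harmless. The paper's step instead needs the lifting property to survive a point-weak$^*$ limit, and this is not automatic. The ideal $I$ contains all finitely supported sequences, so it is weak$^*$-dense in $\ell^\infty(\cl R)$. Concretely, if $\Phi$ is a lift, then so is $P_m\Phi$ with $P_m=(0,\dots,0,1,1,\dots)$, yet $P_m\Phi\to 0$ in the BW-topology. That step therefore really needs an extra diagonal reindexing in $k$, using separability.

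Your converse also uses neither separability nor Kirchberg's theorem, only the hypertrace definition. The paper's route, once repaired, is shorter and gives the stronger intermediate fact that the same $\pi$ admits a global CPC lift. Finally, your compression into matrix subalgebras is optional: $\cl R$ is injective, and the same Cauchy--Schwarz argument works with $tr_{\cl R}$.
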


\begin{proof}
The ``only if" part follows from Theorem \ref{T:Charcrization amenable trace-Kirchberg}. Now suppose that there is a $*$-homomorphsim $\pi: \cl A\rightarrow \cl R^{\omega}= \ell^{\infty}(\cl R)/I$ satisfying the hypothesis of the theorem. 
For each finite dimensional operator system $\cl S$ constructed from finitely many unitaries in $\cl A$, let $\Phi_{\cl S}:\cl S\to \ell^\infty(\cl R)$ be the lift of $\pi_{\cl S}$.  Using the fact that $\ell^{\infty}(\cl R)$ is injective, we may extend this to a contractive CP map 
$\Psi_S:\cl A \to \ell^\infty(\cl R)$. Now since $\ell^{\infty}(R)$ is a dual space, we have the contractive CP maps,
$CPC(A, \ell^{\infty}(\cl R))$ is compact in Arveson's BW-topology.  So we may take a BW-limit point of the net $\Psi_S$ and that will be a lift defined on all of $\cl A$.    
\end{proof}

We have now the following corollary that relates existence of amenable traces with the local lifting property. 
\begin{cor}\label{C:LLP imply amen trace or non Connes embed}
    Let $\cl A$ be a unital C$^*$-algebra with LLP. Then either $\cl A$ has an amenable trace or there is no unital $*$-homomorphism $\pi:\cl A\rightarrow \cl R{^\omega}$.
\end{cor}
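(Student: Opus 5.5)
Suppose there is a unital $*$-homomorphism $\pi:\cl A\to\cl R^{\omega}$. The plan is to show that $\tau:=tr_{\cl R^{\omega}}\circ\pi$ is an amenable trace. We do this by checking the criterion of Theorem~\ref{T:amen tarce-LLP for f.d. operator system}.

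First, $\tau$ is a tracial state. This is immediate, since $tr_{\cl R^{\omega}}$ is a tracial state and $\pi$ is a unital $*$-homomorphism.

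Second, $\cl R^{\omega}=\ell^\infty(\cl R)/I$ is a quotient of the C$^*$-algebra $\ell^\infty(\cl R)$ by the closed ideal $I$. Fix unitaries $u_1,\dots,u_n\in\cl A$ and put $S=\text{span}\{1,u_1,u_1^*,\ldots,u_n,u_n^*\}$. This is a finite dimensional operator subsystem of $\cl A$. The restriction $\pi|_S:S\to\ell^\infty(\cl R)/I$ is UCP, so by the LLP of $\cl A$ it has a UCP lift $S\to\ell^\infty(\cl R)$. (Here LLP is taken in the sense defined in Section~\ref{S:Operator systems and LLP}, applied to the operator system $\cl A$.) This is exactly the local liftability hypothesis of Theorem~\ref{T:amen tarce-LLP for f.d. operator system}, and $tr_{\cl R^\omega}\circ\pi=\tau$ holds by definition. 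Hence $\tau$ is amenable.

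The only delicate point is that Theorem~\ref{T:amen tarce-LLP for f.d. operator system} is stated for separable $\cl A$. I would first attempt a direct argument: its proof never really uses separability. The extension via injectivity of $\ell^\infty(\cl R)$ and the BW-compactness of the contractive CP maps into the dual space $\ell^\infty(\cl R)$ both work for arbitrary $\cl A$. This yields a global cp contractive lift, and the implication (2)$\Rightarrow$(1) of Theorem~\ref{T:Charcrization amenable trace-Kirchberg} then gives amenability.

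If separability must be respected literally, I would argue by reduction instead. For each separable unital C$^*$-subalgebra $\cl B\subseteq\cl A$, the map $\pi|_{\cl B}$ satisfies the same local lifting, since finite dimensional subsystems of $\cl B$ lie in $\cl A$. So $\tau|_{\cl B}$ is amenable. Fix $\cl A\subseteq B(H)$ and take hypertraces $\rho_{\cl B}$ on $B(H)$ for $\tau|_{\cl B}$; these exist by Arveson extension, since amenability is independent of the faithful representation. A weak$^*$ cluster point along the net of separable subalgebras is then an $\cl A$-hypertrace extending $\tau$, exactly as in the proof of Theorem~\ref{T:amne trace vs FJNR-all equivalent conditions}. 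I expect this reduction to be the only step needing care; the rest is formal.
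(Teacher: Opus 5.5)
Your second (reduction) route is correct, but it handles non-separability differently from the paper. The paper argues by contradiction. If $\cl A$ has no amenable trace, Theorem~\ref{T:amne trace vs FJNR-all equivalent conditions} supplies unitaries $u_1,\ldots,u_n$ with $w_{cb}(u_1,\ldots,u_n)<n$. By its ``moreover'' clause, the finitely generated algebra $\cl A_{\cl S}$ they generate has no amenable trace. Theorem~\ref{T:amen tarce-LLP for f.d. operator system}, applied to $\cl A_{\cl S}$ and $tr_{\cl R^\omega}\circ\pi|_{\cl A_{\cl S}}$, then gives a finite-dimensional restriction of $\pi$ with no lift, contradicting the LLP. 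You instead apply Theorem~\ref{T:amen tarce-LLP for f.d. operator system} on every separable subalgebra and glue the hypertraces by a weak$^*$ cluster point. That is the same gluing the paper uses in (2)$\Rightarrow$(1) of Theorem~\ref{T:amne trace vs FJNR-all equivalent conditions}. Your version never uses the joint numerical radius, which in the paper only serves to localize the failure of amenability to a finitely generated subalgebra. It also gives the sharper conclusion that the particular trace $tr_{\cl R^\omega}\circ\pi$ is amenable. The paper's version is two lines once the main theorem is available, and it exhibits an explicit tuple witnessing the obstruction.

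Your first route does not work as stated. A BW-limit point of the maps $\Psi_{\cl S}$ need not lift $\pi$, because the quotient map $q:\ell^\infty(\cl R)\to\cl R^{\omega}$ is not weak$^*$-continuous. Its kernel contains every finitely supported sequence, so it is weak$^*$-dense. For instance, take $\cl A=C(\bb T)$ and $\pi(f)=q(f(v),f(v),\ldots)$ for a Haar unitary $v\in\cl R$. Consider the lifts that use the Haar state $h$ in the first $m$ coordinates and $f\mapsto f(v)$ in the rest. As $m\to\infty$ they converge in the BW-topology to $f\mapsto (h(f)1)_k$, which sends $z$ to $0$, so the limit is not a lift. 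This step comes from the paper's own proof of Theorem~\ref{T:amen tarce-LLP for f.d. operator system}. Also, (2)$\Rightarrow$(1) of Theorem~\ref{T:Charcrization amenable trace-Kirchberg} is only stated for separable algebras.

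If you want a separability-free argument, work with states rather than global lifts:
\begin{itemize}
\item For a finite set $F$ of unitaries, extend the UCP lift of $\pi|_{S_F}$ to a UCP map $\tilde\Phi_F:B(H)\to\ell^\infty(\cl R)$.
\item For $u\in F$, $q\tilde\Phi_F(u)=\pi(u)$ is unitary, so $u$ lies in the multiplicative domain of $q\circ\tilde\Phi_F$.
\item Hence $\rho_F:=tr_{\cl R^\omega}\circ q\circ\tilde\Phi_F$ satisfies $\rho_F(uT)=\rho_F(Tu)$ for $u\in F$ and agrees with $\tau$ on $C^*(F)$.
\item Any weak$^*$ cluster point of $(\rho_F)_F$ is then an $\cl A$-hypertrace extending $\tau$.
\end{itemize}
This also proves the ``if'' direction of Theorem~\ref{T:amen tarce-LLP for f.d. operator system} without constructing a global lift.
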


\begin{proof}
Suppose that $\cl A$ has no amenable trace. By Theorem \ref{T:amne trace vs FJNR-all equivalent conditions}, there exists an $n\in \mathbb{N}$ and unitaries $u_1, \ldots, u_n$ in $\cl A$ such that the (separable) C*-algebras $\cl A_{\cl S}$ generated by $u_1, \ldots, u_n$ (in $\cl A$) has no amenable trace. Hence if there is $*$-homomorphism $\pi:\cl A\rightarrow \cl R{^\omega}$, then, by Theorem \ref{T:amen tarce-LLP for f.d. operator system}, the restriction of $\pi$ on 
$S=\text{span}\{1, u_1, u_1^*, \ldots, u_n, u_n^*\}$ is not liftable to $\ell^{\infty}(R)$ which contradicts our hypothesis that $\cl A$ has LLP. 
\end{proof}

The following result also implies the above corollary and does not rely on the characterization of amenable traces given in Theorem \ref{T:amen tarce-LLP for f.d. operator system}. It also removes the requirement that the ``numerator" algebra be a dual space that was needed to use the BW-topology.

\begin{thm} Let $\cl B$ be an injective unital C*-algebra, $J \subseteq \cl B$ a 2-sided ideal and assume that $\cl B/J$ has a tracial state. If $\cl A$ has no amenable trace, then any unital $*$-homomorphism from $\cl A$ into $\cl B/J$ fails to satisfy the LLP.
\end{thm}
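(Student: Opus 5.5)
Suppose $\pi:\cl A\to \cl B/J$ is a unital $*$-homomorphism with the LLP, in the sense that its restriction to every finite-dimensional operator subsystem of $\cl A$ has a UCP lift to $\cl B$. We will show that $\cl A$ then has an amenable trace, which contradicts the hypothesis. By Theorem~\ref{T:amne trace vs FJNR-all equivalent conditions} (the direction (2)$\Rightarrow$(1)), it is enough to prove $w_{cb}(u_1,\ldots,u_n)=n$ for every $n$-tuple of unitaries $u_1,\ldots,u_n$ in $\cl A$. So fix such a tuple. Since $w_{cb}\le n$ automatically, suppose for contradiction that $w_{cb}(u_1,\ldots,u_n)=w<n$.

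\emph{Step 1: pass to $\cl B/J$.} Apply Proposition~\ref{wcb-prop} to the tuple $(u_1/2w,\ldots,u_n/2w)$; by compression this also works for operators in $\cl A$. It gives a UCP map $\cl U_n\to\cl A$ with $E_{1,2,i}\mapsto u_i/2w$. Compose with $\pi$ to get a UCP map $\phi_0:\cl U_n\to \cl B/J$ with $E_{1,2,i}\mapsto v_i/2w$, where $v_i=\pi(u_i)$ is unitary. Now use the LLP on $S=\mathrm{span}\{1,u_i,u_i^*\}$: there is a UCP lift $\theta:S\to\cl B$ with $\theta(u_i)=b_i$, where $b_i$ is a contraction and $b_i+J=v_i$. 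The key point is that the relevant $w_{cb}$ is computed at the level of the operator system $S$. Because $\cl S_n^d\cong\cl U_n$ and $S$ is a quotient-type image of $\cl S_n$, the inequality $w_{cb}(u_1,\ldots,u_n)\le w$ means that the matrix $I\otimes 1+\sum_i A_i\otimes u_i/(2w)+\text{h.c.}$ is positive whenever $\|A_i\|\le 1$. Applying $\mathrm{id}\otimes\theta$ preserves this positivity, so $w_{cb}(b_1,\ldots,b_n)\le w$ inside $\cl B$. Since $\cl B$ is injective, Proposition~\ref{wcb-prop} gives a UCP map $\oplus_{i=1}^n M_2\to\cl B$ with $E_{1,2,i}\mapsto b_i/2w$. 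Set $P_i$ and $Q_i$ to be the images of the diagonal units. Then
\[
\begin{pmatrix}P_i & b_i/2w\\ b_i^*/2w & Q_i\end{pmatrix}\ge 0,\qquad \sum_{i=1}^n(P_i+Q_i)=1 .
\]

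\emph{Step 2: push back down and use the trace.} Let $q:\cl B\to\cl B/J$ be the quotient map and $\tau$ a tracial state on $\cl B/J$. Apply $q$ to the positive matrix; its off-diagonal entries become $v_i/2w$ with $v_i$ unitary. Now argue as in the proof of Theorem~\ref{T:amen trace imply max FJNR}, with $\tau$ playing the role of the hypertrace. Conjugating by $\mathrm{diag}(-v_i^*,1)$ and summing the entries gives
\[
v_i^*q(P_i)v_i+q(Q_i)-1/w\ge 0 .
\]
Since $\tau$ is a genuine trace on $\cl B/J$ and every element involved lies in $\cl B/J$, we get $\tau(v_i^*q(P_i)v_i)=\tau(q(P_i))$. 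Hence $\tau(q(P_i)+q(Q_i))\ge 1/w$. Summing over $i$ gives $1\ge n/w>1$, a contradiction. Therefore $w_{cb}(u_1,\ldots,u_n)=n$ for every tuple, and so $\cl A$ has an amenable trace.

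\emph{Expected obstacle.} The delicate step is Step 1: showing that a UCP lift defined only on $S$ carries the bound $w_{cb}\le w$ from $(u_i)$ to the lifts $(b_i)$. I will verify this directly with the positivity criterion of Proposition~\ref{wcb-prop}(2), taking matrix coefficients $A_i\in M_p$ and applying $\mathrm{id}_p\otimes\theta$, which is positive because $\theta$ is UCP on $S$. An alternative is to avoid lifting $w_{cb}$ altogether: lift the UCP map $\cl U_n\to\cl B/J$ directly. This uses the fact that $\cl U_n$ is finite-dimensional and that maps out of it factor through $S$ via $\cl S_n$, and then extend using injectivity of $\cl B$. Once the lift lands in an injective algebra, the rest is the same trace computation as before. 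Notably, no hypertrace is needed, because the trace identity is only applied to elements of $\cl B/J$.
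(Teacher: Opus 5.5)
Your proposal is correct and follows the paper's argument, with the details the paper leaves to \cite{DPR} filled in. You reduce via Theorem~\ref{T:amne trace vs FJNR-all equivalent conditions} to a unitary tuple with $w_{cb}<n$, lift the UCP map $\cl U_n\to S$ into $\cl B$ using the LLP and injectivity, push the Choi matrices down to $\cl B/J$, and run the computation of Theorem~\ref{T:amen trace imply max FJNR} with the tracial state of $\cl B/J$ in place of a hypertrace. Your step showing $w_{cb}(b_1,\ldots,b_n)\le w$ is just composing $\theta$ with that UCP map. Your choice $w=w_{cb}(u_1,\ldots,u_n)$, rather than $w_{cb}(\pi(u_1),\ldots,\pi(u_n))$ as in the paper's sketch, is what guarantees the map lands in $S$, so the lift of $\pi|_S$ can be applied.
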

\begin{proof} Assume that a unital $*$-homomorphism $\pi: \cl A \to \cl B/J$ exists that satisfies the LLP.
Since $\cl A$ has no amenable trace, then there exists an $n\in \mathbb{N}$ and unitaries $u_1, \ldots, u_n$ in $\cl A$ with $w_{cb}(u_1, \ldots, u_n)<n$ (by Theorem \ref{T:amne trace vs FJNR-all equivalent conditions}). Then 
\[w:= w_{cb}(\pi(u_1),\ldots, \pi(u_n)) \le w_{cb}(u_1,\ldots,u_n) <n.\]
Now look at the operator system $S=\text{span}\{1, u_1, u_1^*, \ldots, u_n, u_n^*\}$. One can proceed similarly as in \cite[Theorem 3.1]{DPR} to get a contradiction. More precisely, one considers the UCP map
\[ \Phi: \cl U_n \to S, \,\, \Phi(E_{1,2,i}) = \pi(u_i)/2w,\]
where $\cl U_n$ is the operator system in $\oplus_{i=1}^n M_2$ introduced in Section \ref{S:Operator systems and LLP}.
Next, one uses the injectivity of $\oplus_{i=1}^n M_2$ and LLP property of $\pi$ to obtain a lifting of $S$ and hence of $\Phi$ 
to $\oplus_{i=1}^n M_2$ that passes to the quotient and then considers the Choi matrices of this quotient map. The rest of the argument will follow as in \cite[Theorem 3.1]{DPR} to get a contradiction.
\end{proof}

\begin{cor} If $\cl A$ has no amenable trace, then $\cl A$ is not quasi-diagonal.
\end{cor}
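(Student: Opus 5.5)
The plan is to argue by contraposition and apply the preceding theorem with the quasidiagonality data supplying the injective algebra and the ideal. We may assume $\cl A$ is unital; otherwise we pass to the unitization, since an amenable trace on the unitization restricts to one on $\cl A$ after normalizing, or is the character vanishing on $\cl A$, a case that is handled separately. Suppose $\cl A$ is quasi-diagonal. By Voiculescu's characterization there are UCP maps $\phi_k:\cl A\to M_{d_k}$ that are asymptotically multiplicative in operator norm and asymptotically isometric.

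Set $\cl B=\prod_k M_{d_k}$ and $J=\bigoplus_k M_{d_k}$, the $c_0$-sum, which is a closed two-sided ideal. Then $\cl B$ is injective, since it is a product of injective matrix algebras. The quotient $\cl B/J$ has a tracial state: fix a free ultrafilter $\omega$ and take $(b_k)\mapsto \lim_\omega tr_{d_k}(b_k)$. This functional vanishes on $J$ and is tracial. The map $\pi(a)=[(\phi_k(a))_k]$ is a unital $*$-homomorphism $\cl A\to\cl B/J$, because the defects $\phi_k(ab)-\phi_k(a)\phi_k(b)$ tend to $0$ in norm and therefore lie in $J$.

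The key point is that $\pi$ has a global UCP lift, namely $a\mapsto(\phi_k(a))_k\in\cl B$. Its restriction to any finite-dimensional operator subsystem is therefore liftable, so $\pi$ satisfies the LLP in the sense used in the preceding theorem. The theorem then says that if $\cl A$ had no amenable trace, every unital $*$-homomorphism into $\cl B/J$ would fail the LLP. This contradicts the existence of $\pi$, so $\cl A$ must have an amenable trace, which proves the contrapositive.

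As a more direct alternative, one can build the amenable trace explicitly and bypass the theorem. Take $\tau(a)=\lim_\omega tr_{d_k}(\phi_k(a))$. Norm asymptotic multiplicativity implies $\|\phi_k(ab)-\phi_k(a)\phi_k(b)\|_2\to0$, so condition (3) of Theorem \ref{T:Charcrization amenable trace-Kirchberg} holds and $\tau$ is amenable, at least in the separable case. The non-separable case reduces to separable subalgebras by the net/cluster-point argument used in the proof of Theorem \ref{T:amne trace vs FJNR-all equivalent conditions}.

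The main point to check is the precise meaning of ``$\pi$ satisfies the LLP'' in the preceding theorem, namely that it requires only local UCP liftability into $\cl B$; this holds trivially here because we have a global lift. A second point is unitality, which is handled by the unitization remark above.
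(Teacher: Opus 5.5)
Your argument is correct and is essentially the paper's proof, which takes $\cl B=\prod_n M_n$ and $J=\bigoplus_n M_n$, notes that $\cl B/J$ has a tracial state, and applies the preceding theorem; you supply the step the paper leaves implicit, namely that quasidiagonality yields a unital $*$-homomorphism $\cl A\to\prod_k M_{d_k}/\bigoplus_k M_{d_k}$ with a global UCP lift (and $\prod_k M_{d_k}$ is the more accurate choice), while your alternative via $\lim_\omega tr_{d_k}\circ\phi_k$ is the standard direct argument and bypasses that theorem entirely. One correction: drop the unitization remark, because the corollary is meant for unital $\cl A$ and fails without unitality---$K(H)$ is quasidiagonal but has no tracial state at all---so the ``separately handled'' character case cannot actually be handled.
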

\begin{proof} Take $\cl B = \prod_{n \in \bb N} M_n$ and $J= \oplus_{n \in \bb N} M_n$ and observe that $\cl B/J$ has a tracial state and apply the above result.
\end{proof}  


Throughout the rest of this section, let $G$ be a discrete group. A unitary representation $\pi:G\to B(H)$ is {\it amenable} (in the sense of  Bekka) if $C^*_\pi(G)$, the C*-algebra generated by $\pi(G)$ in $B(H)$, has an amenable trace \cite{Bekka1}. We can state the following result, which characterizes amenable representations in terms of free joint numerical radius. 

\begin{thm}
    Let $G$ be a group, and let $\pi$ a unitary representation of $G$. Then:\\
    $(i)$ $\pi$ is amenable if and only if for every $s_1,\ldots,s_n$ in $G$, we have 
    $w_{cb}(\pi(s_1),\ldots,\pi(s_n))=n$. \\
    $(ii)$ If $G$ is finitely generated and $S=\{s_1,\ldots,s_n\}$ is a generating subset of $G$, then $\pi$ is amenable if and only if 
    $w_{cb}(\pi(s_1),\ldots,\pi(s_n))=n$. 
\end{thm}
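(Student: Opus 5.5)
Both parts reduce to Theorem \ref{T:amne trace vs FJNR-all equivalent conditions} and Proposition \ref{P:amenable trace gen by unitaries}, applied to the unital C*-algebra $\cl A = C^*_\pi(G)$. By Bekka's definition, $\pi$ is amenable exactly when $\cl A$ has an amenable trace. The one extra ingredient is a reduction from arbitrary unitaries of $\cl A$ to the unitaries $\pi(s)$, $s\in G$.

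For (i), the forward direction is immediate. If $\pi$ is amenable, then $\cl A$ has an amenable trace, and each $\pi(s_i)$ is a unitary in $\cl A$. So (1)$\Rightarrow$(2) of Theorem \ref{T:amne trace vs FJNR-all equivalent conditions} gives $w_{cb}(\pi(s_1),\ldots,\pi(s_n))=n$.

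For the converse I would copy the net argument from the proof of (2)$\Rightarrow$(1) in that theorem, using finite subsets of $G$ in place of finite sets of unitaries. Fix $\cl A\subseteq B(H)$. For each finite $F\subseteq G$, let $\cl A_F$ be the unital C*-algebra generated by $\pi(F)$. By hypothesis $w_{cb}$ of the tuple $\pi(F)$ equals $|F|$. Proposition \ref{P:amenable trace gen by unitaries} then shows $\cl A_F$ has an amenable trace, so there is a state $\tau_F$ on $B(H)$ with $\tau_F(x\pi(s))=\tau_F(\pi(s)x)$ for all $x\in B(H)$ and $s\in F$.

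Let $\tau$ be a weak$^*$ cluster point of $\{\tau_F\}$, indexed by the directed set of finite subsets of $G$. Then $\tau$ commutes with every $\pi(s)$, $s\in G$, in the sense that $\tau(x\pi(s))=\tau(\pi(s)x)$ for all $x\in B(H)$. The set $\{a\in B(H): \tau(xa)=\tau(ax)\ \forall x\in B(H)\}$ is a norm-closed subalgebra. It is also closed under adjoints, since $\tau(xa^*)=\overline{\tau(ax^*)}=\overline{\tau(x^*a)}=\tau(a^*x)$. It therefore contains $\cl A$, so $\tau$ is an $\cl A$-hypertrace and $\tau|_{\cl A}$ is an amenable trace.

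For (ii), the forward direction is again a special case of (i), or of Theorem \ref{T:amne trace vs FJNR-all equivalent conditions}. For the converse, note that $S$ generates $G$. Hence $\pi(S)$ generates $\cl A$ as a C*-algebra, because $\pi(s^{-1})=\pi(s)^*$ and products of these give all of $\pi(G)$. Proposition \ref{P:amenable trace gen by unitaries} applied to $\pi(s_1),\ldots,\pi(s_n)$ then yields an amenable trace on $\cl A$; equivalently, this is the ``moreover'' clause of Theorem \ref{T:amne trace vs FJNR-all equivalent conditions}.

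There is no real obstacle here. The only point needing care is the passage in (i) from a hypertrace condition for the generators to one for the whole C*-algebra. That is the adjoint- and closure-invariance of the commutant set checked above, and it uses only that $\tau$ is a state (hence self-adjoint) and the continuity of $\tau$.
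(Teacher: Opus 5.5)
Your proof is correct and follows the paper's route: the paper deduces the result directly from Theorem \ref{T:amne trace vs FJNR-all equivalent conditions} (with Proposition \ref{P:amenable trace gen by unitaries} and the ``moreover'' clause covering the finitely generated case). Your rerun of the net argument over finite subsets of $G$, together with the check that $\{a\in B(H): \tau(xa)=\tau(ax)\ \forall x\in B(H)\}$ is a C*-algebra, supplies a step the paper leaves implicit. That step is needed because condition (2) of that theorem ranges over all unitaries of $C^*_\pi(G)$, not only those in $\pi(G)$.
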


\begin{proof}
    This is an immediate consequence of Theorem \ref{T:amne trace vs FJNR-all equivalent conditions} by putting $u_i=\pi(s_i)$, for $i=1,\ldots,n$.
\end{proof}
Next, we write down a theorem that extends one of the main results of \cite[Corollary 3.6]{DPR}

\begin{thm}\label{T:non amen rep-fail LLP}
Let $G$ be a hyperlinear group, and let $\pi$ be a non-amenable representation of $G$ such that the left regular representation $\lambda$ is weakly contained in $\pi$. 
Then $C^*_\pi(G)$  does not have LLP.
\end{thm}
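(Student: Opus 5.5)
Suppose, for contradiction, that $C^*_\pi(G)$ has the LLP; the aim is to produce an amenable trace on $C^*_\pi(G)$, contradicting non-amenability of $\pi$.

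First we build a $*$-homomorphism $\sigma: C^*_\pi(G)\to \cl R^\omega$. Since $\lambda \prec \pi$, there is a surjective $*$-homomorphism $q: C^*_\pi(G)\to C^*_\lambda(G)$ with $q(\pi(g))=\lambda(g)$. Since $G$ is hyperlinear, there is a group embedding $\theta: G\to \cl U(\cl R^\omega)$ with $tr(\theta(g))=0$ for $g\neq e$. Then $g\mapsto \theta(g)$ generates a copy of $L(G)$ inside $\cl R^\omega$: the trace on the generated von Neumann algebra agrees with the canonical trace of $L(G)$. Hence the unitary representation $\theta$ is quasi-equivalent to $\lambda$, and it extends to a $*$-homomorphism $\rho: C^*_\lambda(G)\to \cl R^\omega$. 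Set $\sigma=\rho\circ q$. This $\sigma$ is unital and $tr_{\cl R^\omega}\circ\sigma$ is a tracial state on $C^*_\pi(G)$, namely the canonical trace composed with $q$. Separability is not needed for this construction. If Theorem~\ref{T:amen tarce-LLP for f.d. operator system} is used later, we may restrict to countable subgroups generated by finitely many elements, which are again hyperlinear with $\lambda$ weakly contained in the restriction of $\pi$.

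Next we apply the LLP. By Corollary~\ref{C:LLP imply amen trace or non Connes embed}, a unital C$^*$-algebra with LLP either has an amenable trace or admits no unital $*$-homomorphism into $\cl R^\omega$. We have exhibited $\sigma$, so $C^*_\pi(G)$ would have an amenable trace. Then $\pi$ would be amenable, which is a contradiction. Equivalently, in the language of the preceding theorem, there are $s_1,\dots,s_n$ with $w_{cb}(\pi(s_1),\dots,\pi(s_n))<n$. The LLP then lifts $\sigma$ on $\mathrm{span}\{1,\pi(s_i),\pi(s_i)^*\}$ to $\ell^\infty(\cl R)$, which by Theorem~\ref{T:amen tarce-LLP for f.d. operator system} would produce an amenable trace on the subalgebra generated by the $\pi(s_i)$.

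The main obstacle is the first step: making sure the hyperlinear embedding really gives a $*$-homomorphism defined on the reduced algebra $C^*_\lambda(G)$ rather than only on $C^*(G)$. First I would argue via the trace. The GNS representation of $tr\circ\theta$ on $C^*(G)$ is the left regular representation, because the associated positive definite function is $\delta_e$. Therefore the representation of $C^*(G)$ on $L^2$ of the generated von Neumann algebra is a multiple of $\lambda$, and its kernel contains the kernel of the quotient map $C^*(G)\to C^*_\lambda(G)$. The second point to check is that the LLP passes to the needed form, namely a unital homomorphism into $\cl R^\omega=\ell^\infty(\cl R)/I$ with local liftings. This is precisely what the LLP provides, because $\ell^\infty(\cl R)$ is a C$^*$-algebra and $\cl R^\omega$ is its quotient.
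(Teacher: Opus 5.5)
Your proposal is correct and follows the paper's proof. Both arguments compose the quotient $q\colon C^*_\pi(G)\to C^*_\lambda(G)$, which exists because $\lambda\prec\pi$, with an embedding of $C^*_\lambda(G)$ into $\cl R^\omega$ coming from hyperlinearity, and then apply Corollary~\ref{C:LLP imply amen trace or non Connes embed} to contradict non-amenability of $\pi$. Your only addition is justifying the step the paper merely asserts, that the trace-zero embedding factors through $C^*_\lambda(G)$: if $\lambda(x)=0$ then $tr_{\cl R^\omega}(\tilde\theta(x^*x))=\langle\lambda(x^*x)\delta_e,\delta_e\rangle=0$, where $\tilde\theta$ extends $\theta$ to $C^*(G)$, so faithfulness of $tr_{\cl R^\omega}$ gives $\tilde\theta(x)=0$.
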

\begin{proof}
As $G$ is hyperlinear, there is an embedding $\iota$ of $C^*_\lambda(G)$ into $\cl R^\omega=\ell^{\infty}(\cl R)/ {\cl I}$. If the continuous homomorphism from $C^*_\pi(G)$ onto $C^*_\lambda(G)$ is denoted by $\phi$, then we have a $*$-homomorphism $\Phi: C^*_\pi(G)\rightarrow \cl R^\omega$ given by
$\Phi=\phi\circ\iota$. Now the result follows from Corollary \ref{C:LLP imply amen trace or non Connes embed}.

\end{proof}

There are many examples that fall in this category. We present some of them below. We recall that a unitary representation of $G$ is \emph{weakly mixing} if it does not contain a finite dimensional subrepresentation. 

\begin{cor} 
Let $G$ be a hyperlinear group with Property (T), and let $\pi:G\to B(H)$ be a weakly mixing unitary representation of $G$. If $\pi$ weakly contains $\lambda$, then $C^*_\pi(G)$ does not have LLP. This holds, in particular, for all weakly mixing representations of higher rank irreducible lattices of noncompact semisimple Lie groups with trivial center, 
\end{cor}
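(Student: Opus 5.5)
The plan is to reduce the corollary to Theorem~\ref{T:non amen rep-fail LLP}. That theorem already gives the conclusion once we know three things: $G$ is hyperlinear, $\lambda$ is weakly contained in $\pi$, and $\pi$ is non-amenable. The first two are hypotheses, so the only thing left to prove is that a weakly mixing representation of a Property~(T) group is not amenable in Bekka's sense.

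For this I would use Bekka's characterization of amenable representations \cite{Bekka1}: $\pi$ is amenable if and only if the trivial representation $1_G$ is weakly contained in $\pi\otimes\bar\pi$. Equivalently, an $\cl A$-hypertrace for $C^*_\pi(G)$ restricts, via the $\bar\pi\otimes\pi$ picture on Hilbert--Schmidt operators, to almost invariant unit vectors in $H\otimes\overline{H}$.

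Now suppose $\pi$ were amenable. Property~(T) upgrades almost invariant vectors to a genuinely invariant nonzero vector $\xi\in H\otimes\overline H$. Viewing $\xi$ as a nonzero Hilbert--Schmidt operator $T$ on $H$, invariance says $\pi(g)T\pi(g)^*=T$ for all $g$. Then $TT^*$ is a nonzero compact positive operator commuting with $\pi(G)$. Any of its spectral projections for a nonzero eigenvalue is finite rank and $\pi(G)$-invariant, so $\pi$ has a finite-dimensional subrepresentation. This contradicts weak mixing, so $\pi$ is non-amenable, and Theorem~\ref{T:non amen rep-fail LLP} then shows that $C^*_\pi(G)$ fails the LLP.

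For the ``in particular'' part, let $\Gamma$ be a higher rank irreducible lattice in a noncompact semisimple Lie group with trivial center. Such $\Gamma$ has Property~(T) by Kazhdan and Margulis. Any weakly mixing representation $\pi$ weakly contains $\lambda$: this follows from the character rigidity results of Bekka and Peterson, which say that the only extreme characters of $\Gamma$ are the regular character and those coming from finite-dimensional representations. The remaining hypothesis is hyperlinearity of $\Gamma$. Linear groups are residually finite by Malcev, hence sofic and hyperlinear, so it suffices to note that these lattices are linear, which holds because the adjoint representation of the ambient Lie group is faithful when its center is trivial.

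I expect the main obstacle to be justifying the weak containment of $\lambda$ for these lattices. The precise rigidity input has to be cited correctly: a weakly mixing representation generates a II$_1$-type trace picture, and character rigidity is what forces that trace to be the regular one. The non-amenability step, by contrast, is standard once Bekka's criterion is available.
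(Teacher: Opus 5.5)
\textbf{First part.} Your reduction to Theorem~\ref{T:non amen rep-fail LLP} is the paper's route. Your proof that a weakly mixing representation of a Property~(T) group is non-amenable is correct: Bekka's criterion $1_G\prec\pi\otimes\bar\pi$, then Property~(T) giving a nonzero $\pi$-invariant Hilbert--Schmidt operator $T$, then the finite-rank spectral projections of $TT^*$. The paper simply cites \cite[Corollary 5.9]{Bekka1} for this. Its phrase ``amenable if and only if it is weakly mixing'' is a slip for ``not weakly mixing'', so your direction is the right one.

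Your hyperlinearity remark fills a point the paper leaves implicit. Note only that Malcev's theorem needs finite generation. Lattices are finitely generated, and in any case soficity is a local property, so every linear group is sofic.

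\textbf{The gap} is in the ``in particular'' part, namely the claim $\lambda_\Gamma\prec\pi$ for weakly mixing $\pi$.

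Character rigidity (Bekka for $SL_n(\bb Z)$, $n\ge 3$; Peterson for general higher-rank lattices) classifies characters of $\Gamma$, i.e.\ tracial states on $C^*(\Gamma)$. It tells you nothing about $\pi$ until you have a tracial state on $C^*_\pi(\Gamma)$. Your sentence that ``a weakly mixing representation generates a II$_1$-type trace picture'' is exactly this missing step. It is not a formal consequence of weak mixing. For example, write $\cl O_2$ as the C*-algebra generated by finitely many unitaries. This gives a representation of a free group that is weakly mixing, since a finite-dimensional subrepresentation would be a finite-dimensional representation of $\cl O_2$. Yet the resulting $C^*_\pi$ has no trace.

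For higher-rank lattices, the needed input is the Boutonnet--Houdayer theorem: for a suitable probability measure $\mu$ on $\Gamma$, $\mu$-stationary states on $C^*(\Gamma)$ are conjugation invariant. Stationary states on $C^*_\pi(\Gamma)$ always exist, so this yields a trace on $C^*_\pi(\Gamma)$. Only then do the remaining steps go through:
\begin{enumerate}
\item character rigidity (using trivial center) says an extreme such trace is either $\delta_e$ or comes from a finite-dimensional representation;
\item Property~(T) says a finite-dimensional representation weakly contained in $\pi$ is actually contained in $\pi$, which weak mixing forbids;
\item hence $\lambda_\Gamma\prec\pi$.
\end{enumerate}
This is precisely \cite[Corollary D]{BH1}, which the paper cites. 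You need to invoke that result, or its stationary-character input, rather than character rigidity alone.
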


\begin{proof}
 We note that since $G$ has Property (T), then $\pi$ is amenable if and only if it is weakly mixing \cite[Corollary 5.9]{Bekka1}. Thus the result follows from 
 Theorem \ref{T:non amen rep-fail LLP}. The last statement follows since a higher rank irreducible lattice of a noncompact semisimple Lie group with trivial center has property (T) and its weakly mixing representations weakly contain the left regular representation \cite[Corollary D]{BH1}.
\end{proof}

Another class of examples comes from exotic C*-algebras of groups. Let us first recall some terminology. Fix a discrete group $G$. Let $\mathcal{D}$ be an algebraic ideal (not necessary closed) in $\ell^\infty(G)$ that contains $c_{00}(G)$. 
A unitary representation $\pi:G\to B(H)$ is called a $\mathcal{D}$-representation if there is a dense subset $H_0$ of $H$ such that for every $\xi,\eta\in H_0$, the coefficient function $\langle \pi(\cdot)\xi,\eta \rangle$ belongs to $\mathcal{D}$. We let $C^*_{\mathcal{D}}(G)$, called an \emph{exotic} 
$C^*$-algebra of $G$, to be the completion of 
$\ell^1(G)$ with respect to the C*-norm $\|\cdot\|_{\mathcal{D}}$ defined by
$$\|f\|_{\mathcal{D}}=\sup\big\{\|\pi(f)\|: \pi\text{ is an $\mathcal{D}$-representation of }G\big\}.$$
These exotic C*-algebras were first introduced by Brown and Guentner in \cite{BG} and have been widely studied since then (see \cite{SW} and the reference therein). A case of particular interest is when $\mathcal{D}=\ell^p(G)$, for $p\in [1,\infty]$, and $C^*_{\ell^p}(G)$ is called \emph{exotic} 
$\ell^p$-$C^*$-algebra of $G$. We list a few properties that these C*-algebras possess; see \cite{BG} and \cite{SW} for proofs. 

\begin{prop}\label{P:exotic C*-alg properties}
    Let $G$ be a discrete nonamenable group, let $\mathcal{D}$ be an algebraic ideal in $\ell^\infty(G)$ that contains $c_{00}(G)$, and let $p\in [1,\infty]$. Then the following holds:
\begin{enumerate}
 	\item The identity map on $\ell^1(G)$ extends to surjective $*$-homomorphism
	$$ C^*(G)\to C^*_{\mathcal{D}}(G)\to C^*_\lambda(G);$$
	\item $C^*_{\ell^p}(G)=C^*_\lambda(G)$ for each $1\leq p\leq 2$, and $C^*_{\ell^\infty}(G)=C^*(G)$;
    \item If $C^*_{\ell^p}(G)=C^*(G)$ for some $1\leq p<\infty$, then $G$ is amenable;
    \item $G$ has property (T) if and only if $C^*(G)\neq C^*_{\mathcal{D}}(G)$ canonically whenever $\mathcal{D}\neq \ell^\infty(G)$;
    \item For a large class of nonamenable groups, including free groups,  $C^*_{\ell^p}(G)$ are canonically pairwise distinct when $p\in [2,\infty]$;
	    \item If $p\geq 2$, then for every $f\in \ell^1(G)$, we have 
    \begin{align}
        \|f\|_{C^*_{\ell^p}(G)}\leq \|f\|_1^{1-\theta}\|f\|_{C^*_\lambda(G)}^\theta \ \ \text{with} \ \ \theta=\frac{2}{p}.
    \end{align}
\end{enumerate}
\end{prop}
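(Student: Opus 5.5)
The statement just made is Proposition~\ref{P:exotic C*-alg properties}, which the paper presents as a list of known facts with references to \cite{BG} and \cite{SW}. My plan is to verify each item directly from the definitions. Where that would require deep input, I will indicate which cited result supplies it.

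\textbf{Item (1).} Every $\mathcal{D}$-representation is a unitary representation of $G$, so $\|f\|_{\mathcal{D}}\le \|f\|_{C^*(G)}$. This inequality gives the first quotient map.

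For the second map, the regular representation is a $\mathcal{D}$-representation. Take $H_0=c_{00}(G)$: then the coefficient $\langle\lambda(\cdot)\delta_x,\delta_y\rangle$ is finitely supported, hence lies in $c_{00}(G)\subseteq\mathcal{D}$. Thus $\|f\|_{C^*_\lambda}\le\|f\|_{\mathcal{D}}$.

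Both maps are the identity on the dense subalgebra $\ell^1(G)$, so they are surjective.

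\textbf{Item (2).} First let $p\le 2$ and let $\pi$ be an $\ell^p$-representation. On the dense set $H_0$ its coefficients lie in $\ell^2(G)$. The Cowling--Haagerup--Howe theorem then says that $\pi$ is weakly contained in $\lambda$, so $\|f\|_{\ell^p}\le\|f\|_\lambda$. Combined with (1), this gives $C^*_{\ell^p}(G)=C^*_\lambda(G)$.

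For $p=\infty$ every representation qualifies, since all coefficients are bounded. Hence $C^*_{\ell^\infty}(G)=C^*(G)$.

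\textbf{Item (3).} Suppose $C^*_{\ell^p}(G)=C^*(G)$. Then the trivial representation is weakly contained in $\ell^p$-representations. Tensoring $\ell^p$-representations increases the integrability of coefficients, since products of $\ell^p$ functions lie in $\ell^{p/k}$. Taking a $k$-fold tensor power with $p/k\le 2$, the trivial representation (which is its own tensor power) becomes weakly contained in $\lambda$. This is amenability.

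\textbf{Item (6).} This is the quantitative form of the same tensor-power idea. Alternatively, it follows by complex interpolation between the cases $p=2$ and $p=\infty$, using the bound $\|f\|_{C^*}\le\|f\|_1$.

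\textbf{Items (4) and (5).} These are the substantive cited results.

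For (4), the approach uses two facts. Property (T) means the trivial representation is isolated in the dual. If $\mathcal{D}\neq\ell^\infty$, then $\mathcal{D}$ misses the constants, because an ideal containing $1$ is everything. Hence the trivial representation is not a $\mathcal{D}$-representation, and the Kazhdan projection separates the two norms. The converse direction needs the construction in \cite{BG}.

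For (5), the argument needs spectral gap and rapid-decay type estimates for free groups from \cite{SW}. I would cite these directly.

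\textbf{Main obstacle.} The hard step is the distinctness in (5), together with the converse direction of (4). I would defer both to \cite{BG,SW} rather than reprove them.
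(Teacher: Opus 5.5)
The paper gives no proof of this proposition. It records the six items as known facts and refers to \cite{BG} and \cite{SW}. Where you defer (the converse in (4), and (5)) you are doing what the paper does. Your direct arguments for (1)--(3) are correct, and the tensor-power argument for (3) is the standard one.

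The genuine gap is (6): neither justification you offer is an argument.
\begin{itemize}
\item There is no interpolation scale in $p$ for the norms $\|\cdot\|_{C^*_{\ell^p}(G)}$. Each is a supremum over a class of representations, not the norm of an analytic family of operators. So ``complex interpolation between $p=2$ and $p=\infty$'' has nothing to act on.
\item The tensor-power trick, in the form you indicate, only controls $|\varphi|^k$ for a coefficient $\varphi$. That forces absolute values onto the function paired with $\varphi$, losing the cancellation needed for general $f$. It is not clear how it would give the exponent $2/p$.
\end{itemize}

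The missing idea is to pass to coefficients through convolution powers. Let $\pi$ be an $\ell^p$-representation, $\xi\in H_0$, $\varphi=\langle\pi(\cdot)\xi,\xi\rangle\in\ell^p(G)$, $g=f^{*}*f$, and $1/p+1/p'=1$. Since $\pi(g)\ge 0$, Jensen's inequality for its spectral measure gives
\[ \|\pi(f)\xi\|^{2m}=\langle\pi(g)\xi,\xi\rangle^m\le\|\xi\|^{2m-2}\langle\pi(g^{*m})\xi,\xi\rangle\le\|\xi\|^{2m-2}\|g^{*m}\|_{p'}\|\varphi\|_p . \]
The interpolation that actually occurs is the elementary H\"older bound $\|h\|_{p'}\le\|h\|_1^{1-2/p}\|h\|_2^{2/p}$. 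Combine it with $\|g^{*m}\|_1\le\|f\|_1^{2m}$ and $\|g^{*m}\|_2=\|\lambda(g)^{m-1}g\|_2\le\|\lambda(f)\|^{2m-2}\|g\|_2$. Taking $2m$-th roots and letting $m\to\infty$ removes the factors $\|\varphi\|_p$ and $\|g\|_2$. This gives $\|\pi(f)\xi\|\le\|\xi\|\,\|f\|_1^{1-2/p}\|\lambda(f)\|^{2/p}$, and density of $H_0$ plus the supremum over $\pi$ yield (6). The same estimate also gives (2), using $\|h\|_{p'}\le\|h\|_2$ when $p\le 2$. It gives (3) as well, by taking $f\ge 0$ and applying Kesten's criterion.

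There is a smaller gap in your forward direction of (4). Knowing that the trivial representation is not itself a $\mathcal{D}$-representation does not show that the Kazhdan projection vanishes in $C^*_{\mathcal{D}}(G)$. A subrepresentation of a $\mathcal{D}$-representation need not visibly be one, because $\mathcal{D}$ is not closed. What you need is that no $\mathcal{D}$-representation has a nonzero invariant vector. Suppose $\xi$ were an invariant unit vector, and pick $\eta\in H_0$ with $\|\xi-\eta\|<1/3$. Then $|\langle\pi(s)\eta,\eta\rangle|>2/9$ for all $s$, so this element of $\mathcal{D}$ is invertible in $\ell^\infty(G)$. Since $\mathcal{D}$ is an ideal, this forces $\mathcal{D}=\ell^\infty(G)$, a contradiction.
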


Our result, together with \cite[Proposition 5.2]{RW}, states that these exotic algebras behave more like the reduced C$^*$-algebras when it comes to having an amenable trace and LLP.

\begin{thm}
    Let $G$ be a discrete nonamenable group, and let $\mathcal{D}$ be an algebraic ideal in $\ell^\infty(G)$ that contains $c_{00}(G)$ such that $C^*(G)\neq C^*_{\mathcal{D}}(G)$ canonically. Then $C^*_{\mathcal{D}}(G)$ has no amenable trace. If, in addition, $G$ is hyperlinear, then $C^*_{\mathcal{D}}(G)$ does not have LLP
\end{thm}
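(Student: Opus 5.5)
We argue by contradiction. Suppose $\tau$ is an amenable trace on $C^*_{\mathcal D}(G)$, and let $\pi_{\mathcal D}:G\to C^*_{\mathcal D}(G)$ denote the canonical unitary representation. Since $C^*_{\mathcal D}(G)$ is, by definition, the C*-algebra generated by $\pi_{\mathcal D}(G)$, the representation $\pi_{\mathcal D}$ is amenable in Bekka's sense. By \cite{Bekka1}, an amenable representation $\pi$ weakly contains the trivial representation $1_G$ if and only if... we do not need the full statement. The cleaner route uses Bekka's result that $\pi$ is amenable if and only if $\pi\otimes\bar\pi$ weakly contains $1_G$. We could equivalently read this off from Theorem~\ref{T:amne trace vs FJNR-all equivalent conditions}(4): $w(\sum_i \pi(s_i)\otimes \overline{\pi(s_i)})=n$ for all finite sets $s_1,\ldots,s_n\in G$. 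This forces the trivial character to be continuous with respect to the norm of $C^*_{\pi\otimes\bar\pi}(G)$.

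The key step is to check that $\pi_{\mathcal D}\otimes\overline{\pi_{\mathcal D}}$ is again a $\mathcal D$-representation, or at least is weakly contained in one. Choose a dense subspace $H_0$ on which the coefficients of $\pi_{\mathcal D}$ lie in $\mathcal D$. On the algebraic tensor product $H_0\odot \overline{H_0}$, the coefficients of $\pi\otimes\bar\pi$ are finite sums of products $\langle\pi(\cdot)\xi,\eta\rangle\overline{\langle\pi(\cdot)\xi',\eta'\rangle}$. Each such product lies in $\mathcal D$, because $\mathcal D$ is an ideal in $\ell^\infty(G)$ and the second factor is bounded. Hence $\pi\otimes\bar\pi$ is a $\mathcal D$-representation. (We first take $\pi_{\mathcal D}$ to be a direct sum of $\mathcal D$-representations realizing the norm; such direct sums are again $\mathcal D$-representations on the algebraic direct sum of the dense subspaces.) Since $1_G\prec \pi\otimes\bar\pi$ and $\pi\otimes\bar\pi$ is a $\mathcal D$-representation, the trivial character $f\mapsto \sum_g f(g)$ is bounded by $\|\cdot\|_{\mathcal D}$. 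Then, for any unitary representation $\sigma$, Fell absorption gives $\sigma\otimes(\pi\otimes\bar\pi)$ weakly containing $\sigma$. Moreover, $\sigma\otimes\rho$ is a $\mathcal D$-representation whenever $\rho$ is, by the same ideal argument. Hence $\|f\|_{C^*(G)}\le\|f\|_{\mathcal D}$, so $C^*(G)=C^*_{\mathcal D}(G)$ canonically, which contradicts the hypothesis. The step I expect to be most delicate is this passage from ``$1_G$ is weakly contained'' to the equality of norms, specifically checking that tensoring an arbitrary $\sigma$ with a $\mathcal D$-representation keeps the coefficients in $\mathcal D$ on a dense subspace. This works again because $\mathcal D$ is an ideal and the coefficients of $\sigma$ are bounded. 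This is essentially \cite[Proposition 5.2]{RW}, which I would cite if it is available.

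For the LLP statement, assume in addition that $G$ is hyperlinear. By Proposition~\ref{P:exotic C*-alg properties}(1), $\lambda$ is weakly contained in $\pi_{\mathcal D}$, and we have just shown that $\pi_{\mathcal D}$ is non-amenable. Theorem~\ref{T:non amen rep-fail LLP} then shows that $C^*_{\mathcal D}(G)$ fails the LLP. Equivalently, compose $C^*_{\mathcal D}(G)\to C^*_\lambda(G)\hookrightarrow\cl R^\omega$ and apply Corollary~\ref{C:LLP imply amen trace or non Connes embed}.
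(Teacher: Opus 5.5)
Your proposal is correct and follows the paper's route. The paper simply cites \cite[Proposition 5.2]{RW} for the absence of an amenable trace, and your argument (Bekka's criterion $1_G\prec\pi_{\mathcal D}\otimes\overline{\pi_{\mathcal D}}$ combined with the ideal property of $\mathcal D$, which keeps $\sigma\otimes\pi_{\mathcal D}\otimes\overline{\pi_{\mathcal D}}$ a $\mathcal D$-representation for every $\sigma$) is a valid self-contained proof of that cited fact; the LLP part is then deduced from Theorem~\ref{T:non amen rep-fail LLP} via $\lambda\prec\pi_{\mathcal D}$, exactly as you do. One small correction: the step $\sigma=\sigma\otimes 1_G\prec\sigma\otimes(\pi\otimes\bar\pi)$ uses the stability of weak containment under tensor products, not Fell's absorption principle.
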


\begin{proof}
By \cite[Proposition 5.2]{RW}, $C^*_{\mathcal{D}}(G)$ has no amenable trace. The final statement follows from Theorem \ref{T:non amen rep-fail LLP}. 
\end{proof}

We finish this section with the following proposition to show that the equality \eqref{Eq:FJNR equal tensor with op} fails for many $\ell^p$-representation on free groups.

\begin{prop}\label{P:failure FJNR equal tensor with op}
Let $\mathbb{F}_n$ be the free group on $n$ generators with the standard generating elements $s_1,\dots,s_n$, and let $p\geq 4$. If $n>2^{p+1}$, then there is an $\ell^p$-representation $\pi$ of $\mathbb{F}_n$ such that
\[w\left( \sum_{i=1}^n \pi(s_i) \otimes \bar{\pi}(s_i) \right) <w_{cb}(\pi(s_1),\ldots,\pi(s_n)).\]
\end{prop}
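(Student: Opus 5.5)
The plan is to take for $\pi$ the GNS representation of a Haagerup radial positive definite function, chosen to be just barely in $\ell^p$. I then bound the left side from above by an interpolation estimate for $\ell^{p/2}$-representations, and the right side from below by evaluating at the cyclic vector. Let $|\cdot|$ be the word length on $\mathbb{F}_n$ with respect to $s_1,\dots,s_n$, and for $0<t<1$ let $\varphi_t(x)=t^{|x|}$. By Haagerup's theorem, $\varphi_t$ is positive definite. The sphere of radius $k$ has $2n(2n-1)^{k-1}$ elements, so $\varphi_t\in\ell^p(\mathbb{F}_n)$ exactly when $(2n-1)t^p<1$. Fix such a $t$, and let $(\pi,H,\xi)$ be the GNS triple of $\varphi_t$.

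First I check that $\pi$ is an $\ell^p$-representation. Take $H_0=\mathrm{span}\{\pi(g)\xi\}$, which is dense. Each coefficient $\langle\pi(\cdot)\pi(g)\xi,\pi(h)\xi\rangle=\varphi_t(h^{-1}\cdot g)$ is a two-sided translate of $\varphi_t$, hence lies in $\ell^p$, and so do finite linear combinations.

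Next, $\pi\otimes\bar\pi$ is an $\ell^{p/2}$-representation. On the dense subspace $H_0\odot\overline{H_0}$, its coefficients are finite sums of products of two $\ell^p$ functions, and by H\"older these lie in $\ell^{p/2}$. Since $p\ge4$ we have $p/2\ge 2$, so Proposition~\ref{P:exotic C*-alg properties}(6) applies with $\theta=4/p$. Using Haagerup's norm $\|\sum_i\lambda(s_i)\|=2\sqrt{n-1}$ (valid for $n\ge2$), this gives
\[
w\Big(\sum_i\pi(s_i)\otimes\bar\pi(s_i)\Big)\le\Big\|\sum_i s_i\Big\|_{C^*_{\ell^{p/2}}}\le n^{1-4/p}\big(2\sqrt{n-1}\big)^{4/p}< 2^{4/p}\,n^{1-2/p}.
\]

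For the lower bound, take $U_i=1$ in Definition~\ref{D:free joint numerical radius}:
\[
w_{cb}(\pi(s_1),\dots,\pi(s_n))\ge\Big|\Big\langle \sum_i\pi(s_i)\xi,\xi\Big\rangle\Big|=\sum_i\varphi_t(s_i)=nt .
\]
Letting $t\uparrow(2n-1)^{-1/p}$, it then suffices to have $n(2n-1)^{-1/p}>2^{4/p}n^{1-2/p}$. Raising to the $p$-th power, this is $n^2>16(2n-1)$, which holds once $n\ge 32$; this is implied by $n>2^{p+1}\ge 32$. With $t$ close enough to $(2n-1)^{-1/p}$ the strict inequality persists, and that $t$ still gives an $\ell^p$-representation.

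The main obstacle is justifying that Proposition~\ref{P:exotic C*-alg properties}(6) applies to $\pi\otimes\bar\pi$, that is, that $\|\cdot\|_{C^*_{\ell^{p/2}}}$ dominates $\|(\pi\otimes\bar\pi)(f)\|$. This follows from the definition of the exotic norm as a supremum over $\ell^{p/2}$-representations, once the $\ell^{p/2}$ property of the coefficients on the dense subspace is verified as above. I would also double-check whether the constant in (6) and the restriction to $p\ge4$ are stated so that the stated threshold $n>2^{p+1}$ is what the computation naturally requires; a cruder bound such as $2\sqrt{n-1}\le 2\sqrt n$ already suffices.
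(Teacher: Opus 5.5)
Your proof is correct and is essentially the paper's argument. It uses the GNS representation of Haagerup's function $t^{|x|}$ with $t$ just below $(2n-1)^{-1/p}$, the interpolation estimate for the $\ell^{p/2}$-representation $\pi\otimes\bar\pi$ as the upper bound, and the cyclic-vector value $nt$ as a lower bound for $w_{cb}$.

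Your sharper input $\|\sum_i\lambda(s_i)\|=2\sqrt{n-1}\le 2\sqrt{n}$ is what makes the stated hypothesis suffice. The paper instead effectively uses $2\sqrt{2n-1}$ together with $2^{4/p}\le 2$. Its chain then ends in $n^4\le 2^p(2n-1)^3$, which only forces $n<2^{p+3}$ and so is not contradicted by $n>2^{p+1}$. Your condition $n^2>16(2n-1)$, by contrast, holds for every $n\ge 32$ and hence follows from $n>2^{p+1}$.
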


\begin{proof}
    Let $\pi$ be an $\ell^p$-representation on $\mathbb{F}_n$. If the relation \eqref{Eq:FJNR equal tensor with op} always holds, then we must have (as $(\pi^*)^{op}=\bar{\pi}$)
\begin{align}\label{Eq:1}
  w\left(\sum_{i=1}^{n} \pi(s_i)\right)\leq w\left(\sum_{i=1}^n \pi(s_i) \otimes  \bar{\pi}(s_i)\right).
\end{align}
We will show that this does not hold in general. 
We first note that since $\pi\otimes \bar{\pi}$ is a $\ell^{p/2}$-representation, by Proposition \ref{P:exotic C*-alg properties}(5), we have
\begin{align*}
  \|\pi\otimes \bar{\pi}(f)\|\leq\|f\|_1^{1-4/p} \|\lambda(f)\|^{4/p} \ \ \ (f\in \ell^1(\mathbb{F}_n), p\geq 4).
\end{align*}
In particular, 
\begin{align}\label{Eq:2}
w\left(\sum_{i=1}^n \pi(s_i) \otimes \bar{\pi}(s_i)\right)\leq n^{1-4/p}\left\|\sum_{i=1}^n \lambda (s_i) \right\|^{4/p}\leq 2n^{1-4/p}(2n-1)^{2/p}.
\end{align}
Now, by a result of Haagerup, for every $\alpha \in (0,1)$, the mapping
\begin{align*}
  \varphi_\alpha: \mathbb{F}_n\to \mathbb{R} \ \ , \ \ \varphi_p(s)=\alpha^{|s|} \ \ (s\in \mathbb{F}_n),
\end{align*}
is a positive-definite function on $\mathbb{F}_n$. Here $|\cdot|$ is the standard word-length function on $\mathbb{F}_n$.
Hence, if $\pi_\alpha$ is the GNS-representation of $\varphi_\alpha$, then we have
\begin{align*}
  w\left(\sum_{i=1}^{n} \pi_\alpha(s_i) \right) & = w\left( \pi_\alpha\left(\sum_{i=1}^{n} \delta_{s_i}\right) \right) \\
  &\geq \varphi_\alpha\left(\sum_{i=1}^{n} \delta_{s_i}\right) \\
  &=\sum_{i=1}^{n} \alpha^{|s_i|} \\
  &=n\alpha,
\end{align*}
as $|s_i|=1$ for every $i=1,\dots,n$. By looking at the growth of $ \mathbb{F}_n$, it is routine to check that $\varphi_\alpha\in \ell^p( \mathbb{F}_n)$ if $\alpha<(2n-1)^{-1/p}$. This implies that if the relation \eqref{Eq:1} always holds, then combining the above with the relation \eqref{Eq:2} and the fact that $\pi_\alpha$ is an $\ell^p$-representation for every  $\alpha<(2n-1)^{-1/p}$, we must have
\begin{align*}
   n(2n-1)^{-1/p}& =\lim_{\alpha\to (2n-1)^{-1/p}} n\alpha \\
   &\leq \lim_{\alpha\to (2n-1)^{-1/p}}  w\left(\sum_{i=1}^{n} \pi_\alpha(s_i) \right) \\
   &\leq \lim_{\alpha\to (2n-1)^{-1/p}}  w\left(\sum_{i=1}^n \pi_\alpha(s_i) \otimes \bar{\pi}_\alpha(s_i)\right)\\
  & \leq 2n^{1-4/p}(2n-1)^{2/p}.
\end{align*}
In short, we will have
\begin{align*}
  n^{4/p} \leq 2(2n-1)^{3/p}\Rightarrow n \leq 2^{p+3}.
\end{align*}
This is, of course, a contradiction based on our hypothesis for $n$ and $p$.
\end{proof}

\

\section{Acknowledgements}
The authors acknowledge the support of the Institut Mittag-Leffler during the preparation of this manuscript. This material is based upon work supported by the Swedish Research Council under grant
no. 2021-06594 while the authors were in residence at Institut Mittag-Leffler in Djursholm, Sweden during the year of 2026. M.R. was partially
supported by the Wallenberg Centre for Quantum Technology (WACQT) funded by the Knut and Alice Wallenberg Foundation (KAW). 
E.S. would like to thank the Wenner-Grenn Foundation (the project GFOh2024-0025) which supported his sabbatical visit to Chalmers University of Technology, 2025-2026. E.S. was partially supported by NSERC Discovery Grant RGPIN-2025-04833.

\end{document}